\newcommand{\Ext}{\mathrm{Ext}}
\newcommand{\Hom}{\mathrm{Hom}}
\newcommand{\End}{\mathrm{End}}
\newcommand{\sHom}{\underline{\mathrm{Hom}}}
\newcommand{\sEnd}{\underline{\mathrm{End}}}
\newcommand{\ann}{\mathrm{ann}}
\newcommand{\sann}{\underline{\mathrm{ann}}}
\newcommand\ca{\mathrm{ca}}
\newcommand{\m}{\mathfrak{m}}
\newcommand\TT{\mathcal{T}}
\newcommand{\XX}{\mathcal{X}}
\newcommand{\module}{\mathrm{mod}}
\newcommand{\MCM}{\mathrm{MCM}}
\newcommand{\ZCM}{\mathrm{ZCM}}
\newcommand{\sMCM}{\underline{\mathrm{MCM}}}
\newcommand{\add}{\mathrm{add}}
\newcommand{\gldim}{\mathrm{gldim }}
\newcommand{\alex}{\mathcal{A}}
\newcommand{\kalex}{\mathcal{KA}}
\newcommand{\MM}{\mathfrak{CM}}
\newcommand{\coker}{\mathrm{coker}}
\DeclareMathOperator{\cl}{\mathrm{cl}}
\newtheorem{theorem}{Theorem}[section]
\newtheorem*{theorem*}{Theorem}
\newtheorem{lemma}[theorem]{Lemma}
\newtheorem*{lemma*}{Lemma}
\newtheorem{proposition}[theorem]{Proposition}
\newtheorem*{proposition*}{Proposition}
\newtheorem{corollary}[theorem]{Corollary}
\newtheorem*{corollary*}{Corollary}
\theoremstyle{definition}
\newtheorem{definition}[theorem]{Definition}
\newtheorem{example}[theorem]{Example}
\newtheorem{remark}[theorem]{Remark}
\newtheorem{chunk}[theorem]{}
\titleformat{\chapter}[block]{\Huge\scshape\bf\centering}{\thechapter.}{1em}{} 
\titleformat{\section}[block]{\large\scshape\bf\centering}{\thesection.}{1em}{} 
\titleformat{\subsection}[block]{\scshape\bf\centering}{\thesubsection.}{1em}{} 
\author[M.~Akdenizli]{Mert Akdenizli}
\address{Department of Mathematics, Bogazici University, Istanbul, Turkey}
\email{mert.akdenizli@boun.edu.tr}
\author[B.~Aytekin]{Bilal Aytekin}
\address{Department of Mathematics, Bogazici University, Istanbul, Turkey}
\email{bilal.aytekin@boun.edu.tr}
\author[B.~Cetin]{Baran {\c C}etin}
\address{Department of Mathematics, Bilgi University, Istanbul, Turkey}
\email{baran.cetin@bilgiedu.net}
\author[\"{O}.~Esentepe]{\"{O}zg\"{u}r Esentepe}
\address{Department of Mathematics, University of Connecticut, Storrs, CT 06269}
\email{ozgur.esentepe@uconn.edu}
\title{An Alexandrov Topology for Maximal Cohen-Macaulay Modules}
\subjclass[2020]{13C14, 13D07, 54A05, 54H10}
\keywords{Maximal Cohen-Macaulay modules, cohomology annihilators, Alexandrov topology}
\begin{document}
\maketitle

\begin{abstract}
Using the theory of cohomology annihilators, we define a family of topologies on the set of isomorphism classes of maximal Cohen-Macaulay modules over a Gorenstein ring. We study compactness of these topologies.
\end{abstract}

\section{Introduction}

Recently in \cite{HT}, Hiramatsu and Takahashi introduced a topology on the set of isomorphism classes of maximal Cohen-Macaulay modules over a Cohen-Macaulay local ring. Their method utilizes the concept of degeneration to define a closure operator with the purpose of studying the set $\mathrm{E}(d)$ of isomorphism classes of maximal Cohen-Macaulay modules of multiplicity $d$ as a substitute for the module variety $\mathrm{M}(d)$ of $d$-dimensional modules over a finite dimensional algebra.

In the present paper, we introduce a new family of topologies on the set of isomorphism classes of maximal Cohen-Macaulay modules over a Gorenstein ring using the theory of cohomology annihilators. The main idea is as follows. We first consider the set of all ideals which appear as the annihilator of the stable endomorphism ring of a maximal Cohen-Macaulay module. Then, we put a preorder on this set and consider a topology associated with this preorder. 

Our purpose is to try to better understand the support of Tate cohomology over Gorenstein rings. Let us be more precise and explain our motivation for this project. Hilbert's Syzygy Theorem says that the $d$th syzygy module of a finitely generated module over a polynomial ring $S$ in $d$ variables is a free module \cite{hilbert}. This can be stated, using Ext-modules, in fancier terms as $\Ext_S^{d+1}(M,N) = 0$ for any finitely generated $S$-modules $M$ and $N$. This version also holds true for any ring of global dimension at most $d$. When a ring has infinite global dimension on the other hand, we see that for every $i > 0$, there is a pair of finitely generated modules $M,N$ such that $\Ext^i(M,N)$ is nonzero. Hence, it makes sense to study the annihilators of these nonzero Ext-modules. Particularly, for a commutative Noetherian ring $R$, it is useful to study the \textit{cohomology annihilator ideal}
\begin{align*}
    \ca(R) = \{ r \in R \colon r \Ext^i_R(M,N) = 0 \text{ for all } M,N \in \module R \text{ and } i \gg 0\}
\end{align*}
consisting of uniform annihilators of all Ext-modules of finitely generated modules. When the ring is a Gorenstein local ring, this ideal coincides with the set of those ring elements which annihilate the stable endomorphism ring of every maximal Cohen-Macaulay module or equivalently, which annihilate all Hom-sets in the singularity category.

In general, it is not easy to describe the cohomology annihilator ideal with only a few classes of rings where a complete description is possible. One of these classes is the class of one dimensional reduced complete Gorenstein local rings \cite{Esentepe}. In this case, the cohomology annihilator ideal coincides with the conductor ideal. Given a commutative ring $R$, let $\overline{R}$ denote the integral closure of $R$ in its total ring of fractions. Then, the \textit{conductor ideal} is defined as the annihilator of the $R$-module $\overline{R}/R$. When $R$ is an analytically unramified Gorenstein ring, the conductor is isomorphic to the annihilator of the stable endomorphism ring of $\overline{R}$ as an $R$-module and in dimension one, $\overline{R}$ is maximal Cohen-Macaulay as an $R$-module. Hence, the main result of \cite{Esentepe} can be rephrased as follows: When $R$ is a one dimensional reduced complete Gorenstein local ring, if a ring element $r \in R$ annihilates the stable endomorphism ring of $\overline{R}$, then it annihilates the stable endomorphism ring of every maximal Cohen-Macaulay $R$-module. 

This observation motivates the following question: given a Gorenstein local ring $R$, is it always possible to find a single maximal Cohen-Macaulay module $M$ such that any annihilator of the stable endomorphism ring of $M$ is also an annihilator of the entire singularity category? Towards understanding this question, it is natural to consider the following full subcategory of $\MCM(R)$- the category of maximal Cohen-Macaulay $R$-modules:
\begin{align*}
    \cl(M) = \{ L \in \MCM(R) \colon \sann_R(L) \subseteq \sann_R(M) \}
\end{align*}
where $\sann_R(*) = \ann_R \sEnd_R(*)$ is the annihilator of the stable endomorphism ring of $*$ which we call the \textit{stable annihilator} of $*$. By abuse of notation, we will also consider this as a subcategory of the stable category of maximal Cohen-Macaulay modules.

Further abusing the notation, we consider $\cl(M)$ as a subset of the set of isomorphism classes of maximal Cohen-Macaulay $R$-modules. We define a preorder by declaring $L \leq M$ if and only if $L \in \cl(M)$. In Section 2, we recall definitions from the literature regarding preorders and related topologies. Our motivating question then becomes: is there a single maximal Cohen-Macaulay module $M$ which is in the intersection of all nonempty closed sets? The following theorem is our first main theorem which describes our motivating question in terms of topological properties.

\begin{theorem*}
The set of isomorphism classes of maximal Cohen-Macaulay modules is compact if and only if there is a single maximal Cohen-Macaulay module $M$ such that $\sann_R(M) \subseteq \sann_R(X)$ for every maximal Cohen-Macaulay module $X$.
\end{theorem*}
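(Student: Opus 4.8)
The plan is to translate compactness into a condition purely about the preorder and then bridge it with one observation about direct sums. Write $\Omega$ for the set of isomorphism classes of maximal Cohen-Macaulay modules, with the topology of Section~2. In the Alexandrov topology attached to the preorder $\leq$, the minimal open neighbourhood of a class $M$ is the up-set
\[
U_M \;=\; \{\, X \in \Omega : M \leq X \,\} \;=\; \{\, X \in \Omega : \sann_R(M) \subseteq \sann_R(X) \,\};
\]
every open set is a union of such $U_M$, and any open set containing $M$ contains $U_M$. (The one bookkeeping point is that the convention that $\cl(M) = \{L : \sann_R(L) \subseteq \sann_R(M)\}$ is the closure of $\{M\}$ forces open sets to be up-sets.) Since $M \in U_M$ always, $\{U_M : M \in \Omega\}$ is an open cover that refines every open cover; hence $\Omega$ is compact if and only if this cover has a finite subcover — equivalently, if and only if there exist $M_1, \dots, M_n \in \Omega$ such that every $X \in \Omega$ satisfies $\sann_R(M_i) \subseteq \sann_R(X)$ for some $i$.

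Granting this reformulation, the ``if'' direction is immediate: a single $M$ with $\sann_R(M) \subseteq \sann_R(X)$ for all $X$ is such a family with $n = 1$; concretely $U_M = \Omega$, so every open cover has a one-element subcover. For the ``only if'' direction, suppose such $M_1, \dots, M_n$ exist — here finiteness, hence honest compactness, is essential. Put $M := M_1 \oplus \dots \oplus M_n$, which is again maximal Cohen-Macaulay. The crucial inclusion is $\sann_R(M) \subseteq \sann_R(M_i)$ for each $i$: the split injection $M_i \hookrightarrow M$ and split surjection $M \twoheadrightarrow M_i$ realise $\sEnd_R(M_i)$ as a corner ring $\bar{e}_i \, \sEnd_R(M) \, \bar{e}_i$ for a stable idempotent $\bar{e}_i \in \sEnd_R(M)$, so $r \, \sEnd_R(M) = 0$ forces $r \, \sEnd_R(M_i) = 0$. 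Then for arbitrary $X$ choose $i$ with $\sann_R(M_i) \subseteq \sann_R(X)$; we obtain $\sann_R(M) \subseteq \sann_R(M_i) \subseteq \sann_R(X)$, so $M$ is the module demanded by the theorem. (In fact $\sann_R(M) = \bigcap_i \sann_R(M_i)$, the reverse inclusion following because $r \cdot \mathrm{id}_{M_i} = 0$ in the stable category implies $r \, \sHom_R(M_i, M_j) = 0$ for all $i, j$; but only the one inclusion above is needed.)

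The topological bookkeeping is routine. The one step with genuine algebraic content — and the place I would be most careful — is the behaviour of the stable annihilator under finite direct sums, i.e.\ the identity $\sann_R(\bigoplus_i M_i) = \bigcap_i \sann_R(M_i)$ (of which only ``$\subseteq$'' is used), since this is exactly what converts the finitely many witnesses produced by compactness into the single module of the statement. One should check in passing that forming corner rings is compatible with the quotient $\MCM(R) \to \sMCM(R)$ by the ideal of morphisms factoring through a projective, which it is, so that the identification $\sEnd_R(M_i) \cong \bar{e}_i \, \sEnd_R(M) \, \bar{e}_i$ used above is legitimate.
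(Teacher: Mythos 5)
Your proof is correct. It is the open-cover dual of the paper's argument: the paper characterizes compactness via the finite intersection property for closed sets, observes (Lemma \ref{closed-sets-intersect}) that $M_1\oplus M_2\in\cl(M_1)\cap\cl(M_2)$ forces \emph{every} family of nonempty closed sets to have the FIP, and thereby reduces compactness to nonemptiness of the intersection of all closed sets, i.e.\ to the existence of the single module $M$. You instead work with the canonical cover by minimal open neighbourhoods $U_M$ (which, as you note, refines every open cover in an Alexandrov space), extract finitely many witnesses $M_1,\dots,M_n$ from a finite subcover, and only then invoke the direct-sum identity $\sann_R(\bigoplus_i M_i)\subseteq\bigcap_i\sann_R(M_i)$ to merge them into one module. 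The algebraic content is identical in both proofs --- it is exactly the paper's Lemma \ref{basic-prop-lemma}(1), which you rederive via corner rings, correctly noting that the idempotent splitting passes to the stable category because the morphisms factoring through projectives form an ideal. The difference is where the direct sum enters: the paper front-loads it so that compactness becomes literally equivalent to the total intersection being nonempty, while you defer it to a final merging step; your arrangement has the mild advantage of making the ``if'' direction completely transparent ($U_M=\Omega$ for the distinguished $M$), whereas the paper's has the advantage of yielding connectedness (Proposition \ref{connected-prop}) along the way.
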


Therefore, for instance, the set of isomorphism classes of maximal Cohen-Macaulay modules over a one dimensional reduced complete Gorenstein local rings is compact. See Theorem \ref{example1}, Corollary \ref{Rouquier-corollary}, Corollary \ref{example3}.

In Section 3, we study continuous maps between these topological spaces. Continuous maps, in this case, correspond to monotonous (order-preserving) maps. The main result in this section is the following theorem.

\begin{theorem*}
Let $R$ be a Gorenstein local ring and $x$ be a cohomology annihilator which is a nonzerodivisor. Then, the projection map $\pi: R \to R/xR$ induces a continuous map from the set of isomorphism classes of maximal Cohen-Macaulay $R$-modules to the set of isomorphism classes of maximal Cohen-Macaulay $R/xR$-modules.
\end{theorem*}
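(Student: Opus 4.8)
The plan is to realise the induced map as $[M] \mapsto [M/xM]$ and to check that it is order-preserving for the preorders of Section~2; continuity then follows, since, as recalled, continuous maps between these Alexandrov spaces are exactly the monotone ones. First I would verify that the assignment makes sense. As $R$ is Gorenstein, hence Cohen--Macaulay, every associated prime of a maximal Cohen--Macaulay module $M$ is a minimal prime of $R$; the nonzerodivisor $x$ avoids all of these, so it is a nonzerodivisor on $M$ as well. From $0 \to M \xrightarrow{x} M \to M/xM \to 0$ one gets $\depth_{R/xR}(M/xM) = \depth_R M - 1 = \dim R - 1 = \dim(R/xR)$, and $R/xR$ is Gorenstein because $x$ is $R$-regular; hence $M/xM \in \MCM(R/xR)$, depending only on $[M]$. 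It then remains to prove the implication: if $\sann_R(L) \subseteq \sann_R(M)$, then $\sann_{R/xR}(L/xL) \subseteq \sann_{R/xR}(M/xM)$.

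I would deduce this from the identity
\[
  \sann_{R/xR}(M/xM) \;=\; \pi\bigl(\sann_R(M)\bigr), \qquad M \in \MCM(R),
\]
which makes sense because $xR \subseteq \ca(R) \subseteq \sann_R(M)$, and which yields the implication immediately upon applying $\pi$. The first ingredient is a change-of-rings statement for complete cohomology. If $\mathbf{T}$ is a complete resolution of $M$ over $R$, then, since $x$ is $R$-regular, the exact sequence of complexes $0 \to \mathbf{T} \xrightarrow{x} \mathbf{T} \to \mathbf{T}/x\mathbf{T} \to 0$ together with its image under $\Hom_R(-,R)$ (legitimate as the terms of $\mathbf{T}$ are free) shows that $\mathbf{T}/x\mathbf{T}$ is an acyclic complex of free $R/xR$-modules with acyclic $R/xR$-dual and with cokernel $M/xM$; that is, $\mathbf{T}/x\mathbf{T}$ is a complete resolution of $M/xM$ over $R/xR$. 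By tensor--hom adjunction this gives $\widehat{\Ext}^{\,i}_{R/xR}(M/xM,N') \cong \widehat{\Ext}^{\,i}_R(M,N')$ for every $R/xR$-module $N'$; in particular $\underline{\End}_{R/xR}(M/xM) \cong \widehat{\Ext}^{\,0}_R(M,M/xM)$ as $R/xR$-modules, using that $\underline{\End} = \widehat{\Ext}^{\,0}$ for maximal Cohen--Macaulay modules over the Gorenstein ring $R/xR$.

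The second ingredient uses the hypothesis $x \in \ca(R)$: since $R$ is Gorenstein, $x$ annihilates every morphism space of the singularity category, in particular $\widehat{\Ext}^{\,i}_R(M,M)$ for all $i$. Hence, applying the cohomological functor $\widehat{\Ext}^{\,\bullet}_R(M,-)$ to $0 \to M \xrightarrow{x} M \to M/xM \to 0$, all of the connecting multiplication-by-$x$ maps vanish and we obtain short exact sequences
\[
  0 \longrightarrow \widehat{\Ext}^{\,i}_R(M,M) \longrightarrow \widehat{\Ext}^{\,i}_R(M,M/xM) \longrightarrow \widehat{\Ext}^{\,i+1}_R(M,M) \longrightarrow 0 .
\]
Taking $i = 0$ exhibits $\underline{\End}_R(M) = \widehat{\Ext}^{\,0}_R(M,M)$ as an $R/xR$-submodule of $\underline{\End}_{R/xR}(M/xM)$, whence
\[
  \sann_{R/xR}(M/xM) = \ann_{R/xR}\underline{\End}_{R/xR}(M/xM) \subseteq \ann_{R/xR}\underline{\End}_R(M) = \pi\bigl(\sann_R(M)\bigr).
\]
For the reverse inclusion, if $r \in \sann_R(M)$ then $r\cdot\mathrm{id}_M$ factors through a free module, so, lifting to $\mathbf{T}$, the chain endomorphism ``multiplication by $r$'' factors through a contractible complex and is therefore null-homotopic; consequently multiplication by $r$ is zero on $\widehat{\Ext}^{\,i}_R(M,N)$ for every $R$-module $N$ and every $i$. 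Taking $i = 0$ and $N = M/xM$ shows that $r$ annihilates $\widehat{\Ext}^{\,0}_R(M,M/xM) \cong \underline{\End}_{R/xR}(M/xM)$, i.e.\ $\bar r \in \sann_{R/xR}(M/xM)$. This proves the identity and hence the theorem.

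I expect the main obstacle to be the change-of-rings isomorphism $\widehat{\Ext}^{\,\bullet}_{R/xR}(M/xM,-) \cong \widehat{\Ext}^{\,\bullet}_R(M,-)$---equivalently, checking that a complete resolution reduces modulo the regular element $x$ to a complete resolution---together with the (softer but essential) point that the hypothesis $x \in \ca(R)$ is precisely what kills the connecting maps above, so that $\underline{\End}_R(M)$ embeds in $\underline{\End}_{R/xR}(M/xM)$ and the stable annihilator transforms along $\pi$. Everything else---well-definedness of the map, and passing from the displayed identity to monotonicity---is formal. A cosmetic alternative for the reverse inclusion is to argue in the singularity category directly: $x \in \ca(R)$ makes $x\cdot\mathrm{id}_M = 0$ in $D_{\mathrm{sg}}(R)$, so precomposition annihilates $\Hom_{D_{\mathrm{sg}}(R)}(M,M/xM) \cong \widehat{\Ext}^{\,0}_R(M,M/xM)$.
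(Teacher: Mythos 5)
Your proof is correct, and while the overall architecture matches the paper's (reduce continuity to monotonicity, then establish the identity $\sann_{R/xR}(M/xM)=\pi(\sann_R M)$, with the inclusion $\pi(\sann_R M)\subseteq \sann_{R/xR}(M/xM)$ handled exactly as in the paper by factoring multiplication through a perfect complex), your proof of the crucial reverse inclusion takes a genuinely different route. The paper tests $\sann_{R/xR}(M/xM)$ against the specific module $Z=\Omega_{R/xR}\pi\Omega_R^{-1}M$ and runs a chain of ordinary Ext isomorphisms (Rees's change-of-rings $\Ext^1_{R/xR}\cong\Ext^2_R$ plus the Dugas--Leuschke splitting $\Omega_R(M/xM)\cong M\oplus\Omega_R M$, cited from the literature) to exhibit $\sEnd_R(M)$ as a direct summand of a stable Hom over $R/xR$. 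You instead reduce a complete resolution modulo the regular element $x$, obtain $\widehat{\Ext}^{\,\bullet}_{R/xR}(M/xM,-)\cong\widehat{\Ext}^{\,\bullet}_R(M,-)$, and use the long exact sequence of $0\to M\xrightarrow{x}M\to M/xM\to 0$ together with the hypothesis $x\in\ca(R)$ to embed $\sEnd_R(M)$ into $\sEnd_{R/xR}(M/xM)$. These are two faces of the same phenomenon --- your vanishing of the maps induced by $x$ is exactly what makes the triangle over $M/xM$ split, which is the content of the Dugas--Leuschke lemma --- but your version is self-contained (it reproves the splitting rather than citing it) at the cost of setting up the complete-resolution change of rings, whereas the paper stays within ordinary Ext and syzygy calculus at the cost of two external inputs. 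One cosmetic slip: the maps you kill in the long exact sequence are the ones induced by $x:M\to M$, not the connecting homomorphisms; the conclusion is unaffected.
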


In Section 4, we study variations of our topology whose closed sets are generated by sets of the form
\begin{align*}
    \cl_n(M) = \left\lbrace L \in \MCM(R) \colon [\sann_R(L)]^n \subseteq \sann_R(M)  \right\rbrace
\end{align*}
and study their properties. The main motivation is the same: we replace the normalization $\overline{R}$ with module-finite algebras which are Maximal Cohen-Macaulay over the base ring and we study compactness properties in these topologies in the case of higher Krull dimension.

Finally, in the last section, we consider finite representation type and draw lattices of closed sets for the Kolmogorov quotients of these topological spaces - which have finitely many closed subsets.

\textbf{Acknowledgements.} We would like to thank the organizers of the Mathematics Research Program which took place in Istanbul Mathematical Sciences Center (IMBM) at the beginning of September 2021 and also a big thank you to the Centre International de Mathématiques Pures et Appliquées (CIMPA) for funding this program. This project is a fruit of this program the aim of which was to introduce undergraduate students from all over Turkiye to research level problems in mathematics. The fourth author is grateful for the opportunity to give a mini lecture series in this program where he met the first three authors.

\section{Background and Definitions}

Given a set $S$, a \textit{preorder} or a \textit{quasiorder} on $S$ is a binary relation $\leq$ on $S$ which is reflexive and transitive. For any point $a \in S$, the closure $\cl(a)$ is defined as $\cl(a) = \{ x \colon x \leq a \}$ and for a subset $U \subseteq S$, the closure of $U$ is the union
\begin{align*}
    \cl(U) = \bigcup_{a \in U} \cl(a).
\end{align*}
This operator is a Kuratowski closure operator and defines a topology on $S$ where a subset $U$ is closed if and only if $\cl(U) = U$. This topology is known as \textit{Alexandrov topology} or the \textit{specialization topology} associated to $\leq$. We note that arbitrary unions of closed subsets is again closed in this topology. Conversely, given a topological space where arbitrary unions of closed subsets stays closed, it comes from a preorder.

Since we only require reflexivity and transitivity from a preorder, we may have two distinct points $a,b \in S$ such that $a\leq b$ and $b \leq a$. In the Alexandrov topology, these points would be topologically indistinguishable: every closed subset that contains $a$ would also contain $b$ and vice versa. Hence an Alexandrov space does not have to be a $T_0$-space. Being topologically indistinguishable is an equivalence relation. The corresponding quotient space is sometimes called the \textit{Kolmogorov quotient} and it is a $T_0$-space. On the level of preordered sets, this identification gives us a partially ordered set. The Kolmogorov quotient of an Alexandrov space carries almost every topological information about the space. In particular, they are homotopy equivalent.

We are interested in a certain preorder defined on the set of isomorphism classes of a category (we assume that our categories have small skeletons). Given a commutative Noetherian ring $R$, we consider an $R$-linear category $\TT$. In particular, we assume that the Hom-sets have the structure of an $R$-module, compositions respect this structure and also for any two objects $M, N$, the Hom-set $\Hom_\TT(M,N)$ has the natural structure of an $\End_\TT(M)$-$\End_\TT(N)$-bimodule. Moreover, we assume that we have finite direct sums in $\TT$ and the Krull-Remak-Schmidt property is satisfied. Given a subcategory $\XX$, we denote its additive closure by $\add \XX$ - this is the smallest full subcategory of $\TT$ which is closed under finite direct sums and direct summands. For an object $M \in \TT$, we define the \textit{annihilator} of $M$ as
\begin{align*}
    \ann_R M := \ann_R \End_\TT(M).
\end{align*}
Then, the relation $N \leq M$ if and only if $\ann_R N \subseteq \ann_R M$ defines a preorder on the set of isomorphism classes of objects of $\TT$. We denote the corresponding Alexandrov space by $\alex(\TT)$ and the corresponding Kolmogorov quotient by $\kalex(\TT)$.

\begin{remark}
We note that by passing to the Kolmogorov quotient $\kalex(\TT)$, we are essentially identifying each point with an ideal of $R$. Hence, we can consider this topological space as a subset of all ideals of $R$. 
\end{remark}

We record some properties of the closure operator in this topology. Recall from the beginning of this subsection is that we have
\begin{align*}
    \cl(M) = \left\lbrace N \in \TT \colon \ann_R N \subseteq \ann_R M \right\rbrace.
\end{align*}

\begin{lemma}\label{basic-prop-lemma} Let $M, N \in \TT$.
\begin{enumerate}
    \item We have $\cl(M \oplus N) = \cl(M) \cap \cl(N)$.
    \item If $N \in \add M$, then $M \in \cl(N)$.
    \item For any auto-equivalence $\Sigma$ of $\TT$, we have $\cl(M) = \cl(\Sigma M)$. In particular, for any integer $n$, $\Sigma^n M$ is topologically indistinguishable from $M$.
    \item If $N \in \cl(M)$, then $\cl(N) \subseteq \cl(M)$.
\end{enumerate}
\end{lemma}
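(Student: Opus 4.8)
The plan is to prove each of the four parts directly from the definition $\cl(M) = \{N \in \TT : \ann_R N \subseteq \ann_R M\}$, using standard facts about annihilators and endomorphism rings in an $R$-linear Krull-Remak-Schmidt category.

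For part (1), the key observation is that $\End_\TT(M \oplus N)$ decomposes as a matrix-type ring with $\End_\TT(M)$ and $\End_\TT(N)$ on the diagonal and $\Hom_\TT(M,N)$, $\Hom_\TT(N,M)$ off-diagonal. Since the diagonal entries are direct summands of this ring as an $R$-module, I would argue that a ring element $r \in R$ annihilates $\End_\TT(M \oplus N)$ if and only if it annihilates both $\End_\TT(M)$ and $\End_\TT(N)$ (and in fact the cross terms follow automatically, since $r$ killing the identity-component forces $r$ to kill the whole bimodule $\Hom_\TT(M,N)$ via the bimodule structure). This gives $\ann_R(M \oplus N) = \ann_R M \cap \ann_R N$, from which $\cl(M\oplus N) = \cl(M)\cap\cl(N)$ is immediate by intersecting the defining conditions. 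For part (2), if $N \in \add M$ then $N$ is a direct summand of some $M^{\oplus k}$, so there is a split idempotent and the ring $\End_\TT(N)$ is a corner ring $e\End_\TT(M^{\oplus k})e$; hence any $r$ annihilating $\End_\TT(M)$ also annihilates $\End_\TT(M^{\oplus k})$ and therefore its corner $\End_\TT(N)$. Thus $\ann_R M \subseteq \ann_R M^{\oplus k} \subseteq \ann_R N$, which says exactly $M \in \cl(N)$.

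For part (3), an auto-equivalence $\Sigma$ induces an $R$-algebra isomorphism $\End_\TT(M) \cong \End_\TT(\Sigma M)$ (the functor is $R$-linear and fully faithful), so the two endomorphism rings have the same annihilator in $R$; hence $\ann_R M = \ann_R \Sigma M$ and the defining conditions for $\cl(M)$ and $\cl(\Sigma M)$ coincide. The statement about $\Sigma^n M$ follows because $\ann_R(\Sigma^n M) = \ann_R M$ means $\Sigma^n M$ and $M$ are mutually in each other's closures, i.e. topologically indistinguishable. Part (4) is the most purely formal: if $N \in \cl(M)$ then $\ann_R N \subseteq \ann_R M$, and for any $L \in \cl(N)$ we have $\ann_R L \subseteq \ann_R N \subseteq \ann_R M$, so $L \in \cl(M)$; this is just transitivity of inclusion, equivalently transitivity of the preorder $\leq$, which was already noted when the Alexandrov topology was set up.

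The main obstacle is the bookkeeping in part (1) (and the closely related corner-ring argument in part (2)): one must be careful that annihilating the diagonal endomorphism rings really does force annihilation of the off-diagonal Hom-bimodules, so that no information is lost when passing between $\End_\TT(M\oplus N)$ and the pair $(\End_\TT(M), \End_\TT(N))$. I expect this to come down cleanly to the hypothesis that $\Hom_\TT(M,N)$ is an $\End_\TT(M)$-$\End_\TT(N)$-bimodule compatibly with the $R$-action, so that $r \cdot 1_M = 0$ implies $r \cdot f = r\cdot(1_M \circ f) = (r \cdot 1_M)\circ f = 0$ for all $f \in \Hom_\TT(M,N)$; once this is spelled out the rest is routine. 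Parts (3) and (4) should require only a sentence or two each.
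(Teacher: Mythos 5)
Your proof is correct and follows essentially the same route as the paper: the decomposition of $\End_\TT(M\oplus N)$ for part (1), deducing (2) from (1) via direct summands, the isomorphism of endomorphism rings under $\Sigma$ for (3), and transitivity of inclusion for (4). Your explicit justification that killing the diagonal endomorphism rings also kills the off-diagonal Hom-bimodules (via $r\cdot f = f\circ(r\cdot 1_M)$) is a detail the paper leaves implicit, and it is exactly the right use of the bimodule hypothesis.
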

\begin{proof}
The first assertion follows from the fact that 
\begin{align*}
    \End_\TT(M\oplus N) \cong \End_\TT(M) \oplus \End_\TT(N) \oplus \Hom_\TT(M,N) \oplus \Hom_\TT(N,M)
\end{align*}
as $R$-modules. This implies that $\ann_R(M \oplus N) = \ann_R M \cap \ann_R N$. Hence, we have $\ann_R L \subseteq \ann_R(M\oplus N)$ if and only if $\ann_R L \subseteq \ann_R M$ and $\ann_R L \subseteq \ann_R N$. The second assertion follows from the first and the third assertion is due the fact that $N$ and $\Sigma(N)$ have isomorphic endomorphism rings. Finally, the last assertion follows from the transitivity of inclusion.
\end{proof}

This lemma has the following immediate consequence.

\begin{lemma}
Let $\XX$ be a subset of objects in $\TT$. Then, we have
\begin{align*}
    \cl(\XX) = \bigcup_{x \in \; \mathrm{ind}\XX} \cl(x)
\end{align*}
where $\mathrm{ind}\XX$ is the set of indecomposable objects in $\XX$.
\end{lemma}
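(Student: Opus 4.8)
The plan is to reduce the statement to the single-object facts already recorded in Lemma~\ref{basic-prop-lemma}, after which it is a short bookkeeping argument. First I would unwind the left-hand side: by the definition of the closure operator attached to the preorder $\leq$ (Section~2), $\cl(\XX) = \bigcup_{M \in \XX} \cl(M)$. So the claim is equivalent to the equality $\bigcup_{M \in \XX} \cl(M) = \bigcup_{x \in \mathrm{ind}\XX} \cl(x)$, with $\mathrm{ind}\XX$ the set of indecomposable objects occurring in $\XX$.

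For the inclusion ``$\supseteq$'', every $x \in \mathrm{ind}\XX$ is in particular an object of $\XX$, so $\cl(x)$ is one of the sets being unioned on the left; this inclusion is therefore automatic. For ``$\subseteq$'', fix $N \in \cl(\XX)$, say $N \in \cl(M)$ with $M \in \XX$. Here I would invoke the Krull-Remak-Schmidt property of $\TT$ to write $M \cong x_1 \oplus \cdots \oplus x_k$ with each $x_i$ indecomposable, so that each $x_i$ lies in $\mathrm{ind}\XX$; iterating Lemma~\ref{basic-prop-lemma}(1) --- repeated summands doing no harm, since $\cl(x \oplus x) = \cl(x)\cap\cl(x) = \cl(x)$ --- then yields $\cl(M) = \cl(x_1)\cap\cdots\cap\cl(x_k)$. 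In particular $N \in \cl(x_1) \subseteq \bigcup_{x \in \mathrm{ind}\XX}\cl(x)$, which is exactly what is needed.

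I do not expect a genuine obstacle here: once Lemma~\ref{basic-prop-lemma}(1) is available, both inclusions are purely formal, so the proposition really is ``immediate'' as announced. The two points that deserve a moment's attention are the structural hypotheses that make the reduction legitimate: the Krull-Remak-Schmidt property, which is precisely what lets an arbitrary $M \in \XX$ be replaced by a finite list of indecomposables so that Lemma~\ref{basic-prop-lemma}(1) can be applied finitely many times; and the implicit understanding that the indecomposable direct summands of objects of $\XX$ are themselves among the objects counted by $\mathrm{ind}\XX$, which is what the ``$\supseteq$'' step silently relies on. One should also keep the objects nonzero, so that no $\cl(M)$ is all of $\TT$ and each decomposition $M \cong x_1 \oplus \cdots \oplus x_k$ is nonempty.
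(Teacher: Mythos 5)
Your ``$\subseteq$'' direction is precisely the computation the paper has in mind when it calls this lemma an immediate consequence of Lemma~\ref{basic-prop-lemma}: decompose $M\cong x_1\oplus\dots\oplus x_k$ by Krull--Remak--Schmidt and iterate part (1) to get $\cl(M)=\cl(x_1)\cap\dots\cap\cl(x_k)\subseteq\cl(x_1)$. That half is fine and matches the intended argument.

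The gap is that your two inclusions quietly use two different readings of $\mathrm{ind}\XX$. For ``$\supseteq$'' you need $\mathrm{ind}\XX\subseteq\XX$ (``every $x\in\mathrm{ind}\XX$ is in particular an object of $\XX$''), i.e.\ the \emph{elements} reading; for ``$\subseteq$'' you need every indecomposable \emph{summand} of an object of $\XX$ to belong to $\mathrm{ind}\XX$. These readings agree only when $\XX$ is closed under direct summands, and for a general subset the statement fails under either one. Concretely, over $R=k[y]/(y^4)$ take $\XX=\{M_0\oplus M_1\}$ with $M_i=k[y]/(y^{i+1})$, so that $\sann_R M_0=(y)$, $\sann_R M_1=(y^2)$ and $\sann_R(M_0\oplus M_1)=(y^2)$: under the elements reading $\mathrm{ind}\XX=\emptyset$, while under the summands reading $M_0\in\cl(M_0)$ but $M_0\notin\cl(M_0\oplus M_1)=\cl(\XX)$, because the containment from Lemma~\ref{basic-prop-lemma}(1) goes the wrong way for ``$\supseteq$'': one has $\cl(M)\subseteq\cl(x_i)$, not $\cl(x_i)\subseteq\cl(M)$. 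Your closing remark senses this tension but attributes the summand condition to the ``$\supseteq$'' step, when it is ``$\subseteq$'' that needs it. The repair is to make explicit the hypothesis the paper is implicitly using, namely that $\XX$ is closed under direct summands (e.g.\ $\XX=\add\XX$, or $\XX$ a closed set, which is a union of sets $\cl(M)$ and hence summand-closed by Lemma~\ref{basic-prop-lemma}(1)); with that hypothesis both of your inclusions are valid as written.
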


\begin{lemma}\label{closed-sets-intersect}
The intersection of any two (consequently, finitely many) nonempty closed subsets is nonempty.
\end{lemma}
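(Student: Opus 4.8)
The plan is to read the result straight off the behaviour of the closure operator under finite direct sums, Lemma~\ref{basic-prop-lemma}(1), combined with the general principle that in an Alexandrov space a subset is closed exactly when it contains the closure of each of its points. I will carry out the case of two sets and then indicate the trivial extension to finitely many.

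So let $C_1, C_2$ be nonempty closed subsets and choose objects $M \in C_1$ and $N \in C_2$. Closedness of $C_i$ gives $\cl(M) \subseteq C_1$ and $\cl(N) \subseteq C_2$. Since $\TT$ admits finite direct sums, $M \oplus N$ is an object of $\TT$; it lies in $\cl(M \oplus N)$ because $\ann_R(M\oplus N)\subseteq\ann_R(M\oplus N)$ trivially, and by Lemma~\ref{basic-prop-lemma}(1) we have $\cl(M\oplus N)=\cl(M)\cap\cl(N)$. Therefore
\begin{align*}
    M\oplus N\in\cl(M)\cap\cl(N)\subseteq C_1\cap C_2,
\end{align*}
so $C_1\cap C_2\neq\emptyset$. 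For finitely many nonempty closed sets $C_1,\dots,C_k$, pick $M_i\in C_i$ and run the same argument with $\bigoplus_{i=1}^k M_i$, using that Lemma~\ref{basic-prop-lemma}(1) iterates to $\cl\bigl(\bigoplus_{i=1}^k M_i\bigr)=\bigcap_{i=1}^k\cl(M_i)$.

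I do not expect a genuine obstacle here: the statement is a formal consequence of Lemma~\ref{basic-prop-lemma}(1). The only points worth keeping in mind are that $\TT$ is assumed to have finite direct sums (so $M\oplus N$ is a legitimate object, carrying the endomorphism-ring decomposition that underlies that lemma), and the elementary fact that in the Alexandrov topology a set $C$ is closed if and only if $\cl(a)\subseteq C$ for every $a\in C$ — which is precisely what upgrades a single chosen point of each $C_i$ to all of $\cl(M)$ and $\cl(N)$.
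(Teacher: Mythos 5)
Your proof is correct and follows the paper's argument exactly: pick a point in each closed set, use closedness to get $\cl(M)\subseteq C_1$ and $\cl(N)\subseteq C_2$, and place $M\oplus N$ in $\cl(M)\cap\cl(N)=\cl(M\oplus N)$ via Lemma~\ref{basic-prop-lemma}(1). The extension to finitely many sets by iterating the direct sum is also how the paper handles it.
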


\begin{proof}
Let $C_1, C_2$ be two nonempty closed subsets with $M_1 \in C_1$ and $M_2 \in C_2$. This tells us that $\cl(M_1) \subseteq C_1$ and $\cl(M_2) \subseteq C_2$. Then, by Lemma \ref{basic-prop-lemma}, we have that $M_1 \oplus M_2$ is in the intersection $ \cl(M_1) \cap \cl(M_2) \subseteq C_1 \cap C_2$. The case of finitely many nonempty closed subsets follows similarly.
\end{proof}

\begin{proposition}\label{connected-prop}
The Alexandrov space $\alex(\TT)$ of an $R$-linear category $\TT$ is connected.
\end{proposition}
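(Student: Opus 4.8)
The plan is to derive connectedness directly from Lemma \ref{closed-sets-intersect}, which already contains the essential content. First I would note that $\alex(\TT)$ is nonempty: since $\TT$ has finite direct sums it contains a zero object, so its set of isomorphism classes is nonempty, and the question of connectedness is not vacuous.

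Next I would argue by contradiction. Suppose $\alex(\TT)$ is disconnected, so that $\alex(\TT) = U \cup V$ for nonempty open subsets $U, V$ with $U \cap V = \emptyset$. Then $U = \alex(\TT) \setminus V$ and $V = \alex(\TT) \setminus U$ are each the complement of an open set, hence both $U$ and $V$ are closed as well. Thus $U$ and $V$ would be two nonempty closed subsets of $\alex(\TT)$ with empty intersection, directly contradicting Lemma \ref{closed-sets-intersect}. Hence no such decomposition exists, and $\alex(\TT)$ is connected.

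I do not anticipate a real obstacle: the work is already packaged in Lemma \ref{basic-prop-lemma}(1) and Lemma \ref{closed-sets-intersect}, whose point is that for any objects $M, N \in \TT$ one has $\ann_R(M \oplus N) = \ann_R M \cap \ann_R N$, so $M \oplus N$ lies in $\cl(M) \cap \cl(N)$. If a self-contained argument is preferred, I would unwind this in place: given a putative disconnection $\alex(\TT) = U \sqcup V$ as above, pick $M \in U$ and $N \in V$; since $U$ and $V$ are closed they contain the closures $\cl(M)$ and $\cl(N)$ respectively, and since $M \oplus N \in \cl(M) \cap \cl(N)$ by Lemma \ref{basic-prop-lemma}, we get $M \oplus N \in U \cap V = \emptyset$, a contradiction. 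Either way the only subtlety worth a sentence is the observation that in a disjoint open cover each piece is automatically clopen, after which Lemma \ref{closed-sets-intersect} finishes the proof.
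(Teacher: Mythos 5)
Your proof is correct and follows the same route as the paper: both reduce connectedness to the fact (Lemma \ref{closed-sets-intersect}) that nonempty closed sets cannot be disjoint, the paper simply invoking the closed-set characterization of connectedness directly where you pass through open sets and observe each piece is clopen. No gaps.
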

\begin{proof}
Recall that a topological space is connected if it can not be written as a disjoint union of two nonempty closed subsets. By Lemma \ref{closed-sets-intersect}, we know that there are no disjoint nonempty closed subsets. Result follows.
\end{proof}

We are now going to focus on our motivating question. We are interested in the existence of a minimum object in $\TT$ with the preorder we defined. To this end, let us denote by $\ann \TT$ the intersection
\begin{align*}
    \ann \TT = \bigcap_{M \in \TT} \ann_R M.
\end{align*}
Then, our question becomes: is there a single object $M$ such that $\ann \TT = \ann_R M$? In terms of the topology, we can restate this question as whether the intersection of \textit{all} closed subsets is nonempty. It is easy to see that the answer is affirmative if $\TT$ has only finitely many indecomposable objects up to isomorphism. Indeed, in this case, one can take the direct sum of a copy of each indecomposable object and our foregoing discussion tells us that this direct sum belongs to the intersection of all closed subsets. In the last section of this paper, we study the case of finite Cohen-Macaulay type. In particular, we look at the stable category of maximal Cohen-Macaulay modules over simple singularities. However, when there are infinitely many nonisomorphic indecomposable objects, we are not allowed to do this. Hence, we look at the next best thing: compactness.

Recall that a family of subsets of a topological space has the \textit{finite intersection property} if any finite subfamily has nonempty intersection. This is a useful definition to give the following characterization of compactness: a topological space is compact if and only if every family of closed subsets having the finite intersection property has nonempty intersection. But we know, by Lemma \ref{closed-sets-intersect}, that \textit{any} collection of finitely many closed subsets has nonempty intersection. In other words, any family of closed subsets has the finite intersection property. This is the main theorem of this section: our motivating question has an affirmative answer if and only if our space is compact.

\begin{theorem}\label{compact-theorem}
The Alexandrov space $\alex(\TT)$ of an $R$-linear category $\TT$ is compact if and only if there is a single object $M$ such that $\ann \TT = \ann_R M$.
\end{theorem}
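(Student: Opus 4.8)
The plan is to run everything through the finite-intersection-property characterization of compactness, leaning on Lemma \ref{closed-sets-intersect}. That lemma already says every family of nonempty closed subsets of $\alex(\TT)$ has the finite intersection property, so a compact space forces the intersection of \emph{all} nonempty closed subsets to be nonempty, and conversely. Hence the theorem amounts to matching ``the intersection of all nonempty closed sets is nonempty'' with ``there is an object $M$ with $\ann\TT = \ann_R M$''.

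For the implication assuming such an $M$ exists, I would first show that $\cl(M)$ is the smallest nonempty closed set. For any object $N$ we have $\ann\TT \subseteq \ann_R N$ by the very definition of $\ann\TT$, so $\ann_R M = \ann\TT \subseteq \ann_R N$, i.e. $M \in \cl(N)$; by Lemma \ref{basic-prop-lemma}(4) this gives $\cl(M) \subseteq \cl(N)$. Since every nonempty closed set contains the closure of one of its points, $\cl(M)$ lies inside every nonempty closed subset, so in particular $M$ itself belongs to every nonempty closed set. Now, given an open cover $\{U_i\}$ of $\alex(\TT)$, choose $U_{i_0}$ containing $M$; its complement is a closed set not containing $M$, hence empty, so $U_{i_0}=\alex(\TT)$ and the cover has a one-element subcover. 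Therefore $\alex(\TT)$ is compact.

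For the converse, assume $\alex(\TT)$ is compact and apply this to the family $\{\cl(N) : N \in \TT\}$. Each $\cl(N)$ is closed (closure is idempotent: if $L \in \cl(N)$ then $\cl(L)\subseteq \cl(N)$ by Lemma \ref{basic-prop-lemma}(4), so $\cl(\cl(N)) = \cl(N)$) and nonempty, and by Lemma \ref{closed-sets-intersect} this family has the finite intersection property. Compactness then yields a point $M \in \bigcap_{N \in \TT}\cl(N)$. Unwinding the membership, $\ann_R M \subseteq \ann_R N$ for every $N$, hence $\ann_R M \subseteq \bigcap_{N}\ann_R N = \ann\TT$; the reverse inclusion $\ann\TT \subseteq \ann_R M$ is automatic, so $\ann_R M = \ann\TT$.

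I do not anticipate a real obstacle: the conceptual content is entirely supplied by Lemma \ref{closed-sets-intersect}. The only places needing a little care are confirming that each $\cl(N)$ is genuinely a closed set (so the finite-intersection-property form of compactness is being applied to the correct family) and handling the trivial set-theoretic point that a family with the finite intersection property cannot contain the empty set; both are routine.
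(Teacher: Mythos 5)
Your proposal is correct and follows exactly the route the paper takes: Lemma \ref{closed-sets-intersect} shows every family of nonempty closed sets has the finite intersection property, so compactness is equivalent to the intersection of all nonempty closed sets being nonempty, which in turn is equivalent to the existence of an $M$ with $\ann_R M = \ann\TT$. You have merely written out in full the details that the paper leaves implicit in the paragraph preceding the theorem.
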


Given a function $f: S_1 \to S_2$ between preordered sets, $f$ is a continuous function between the associated Alexandrov spaces if and only if it is monotonous. In our setting, it translates to the following lemma.

\begin{lemma}\label{main-continuity-lemma}
Let $\TT$ be an $R$-linear category and $\mathcal{S}$ be an $S$-linear category. Then, a function $F: \alex(\TT) \to \alex(\mathcal S)$ is continuous if and only for any $M , N \in \TT$ we have
\begin{align*}
    \ann_R M \subseteq \ann_R N \implies \ann_S F(M) \subseteq \ann_S F(N).
\end{align*}
\end{lemma}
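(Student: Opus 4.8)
The plan is to unwind the definitions: in an Alexandrov space a subset is closed exactly when it is downward closed for the defining preorder, and the displayed implication is precisely the statement that $F$ is monotone. Recall that $M' \leq M$ in $\TT$ means $\ann_R M' \subseteq \ann_R M$, equivalently $M' \in \cl(M)$, and likewise in $\mathcal S$. Hence the hypothesis $\ann_R M \subseteq \ann_R N$ reads $M \leq N$, the conclusion $\ann_S F(M) \subseteq \ann_S F(N)$ reads $F(M) \leq F(N)$, and what must be proved is: $F$ is continuous if and only if $M \leq N$ implies $F(M) \leq F(N)$ for all $M, N$.

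For the forward direction, I would assume $F$ continuous and $M \leq N$. The point-closure $\cl(F(N)) \subseteq \alex(\mathcal S)$ is closed, so $F^{-1}\big(\cl(F(N))\big)$ is a closed subset of $\alex(\TT)$; being closed it coincides with its own closure, hence is downward closed under $\leq$. It contains $N$ because $F(N) \in \cl(F(N))$, and since $M \leq N$ it therefore contains $M$, i.e. $F(M) \in \cl(F(N))$, which is exactly $F(M) \leq F(N)$.

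For the converse, assume $F$ monotone and let $C \subseteq \alex(\mathcal S)$ be closed, so $C = \cl(C)$; one must show $\cl\big(F^{-1}(C)\big) \subseteq F^{-1}(C)$. If $M \in \cl\big(F^{-1}(C)\big)$ then by the definition of the closure operator there is $N \in F^{-1}(C)$ with $M \leq N$; monotonicity gives $F(M) \leq F(N)$, and since $F(N) \in C$ and $C$ is downward closed (Lemma \ref{basic-prop-lemma}(4)) we get $F(M) \in C$, so $M \in F^{-1}(C)$. Thus $F^{-1}(C)$ is closed and $F$ is continuous.

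The whole argument is formal and short; there is no real obstacle here, and the only thing to watch is the bookkeeping — inclusion of annihilators matches the direction of $\leq$, and ``closed'' means downward closed rather than upward closed — so I would not expect to spend more than a few lines on it.
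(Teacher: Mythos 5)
Your proof is correct and is exactly the standard argument the paper relies on: the paper simply asserts (without proof) the general fact that a map between Alexandrov spaces is continuous if and only if it is monotone for the defining preorders, and then translates it to annihilator inclusions. Your write-up fills in that standard verification correctly, with the right bookkeeping that closed sets are the downward-closed ones.
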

For instance, if there is a fully faithful embedding from $\TT$ to $\mathcal{S}$, it induces a continuous embedding $\alex(\TT) \to \alex(\mathcal{S})$.

\section{The Alexandrov Space of Maximal Cohen-Macaulay Modules}
We are now going to restrict to our special setting of maximal Cohen-Macaulay modules over Gorenstein rings. The category we are interested in is the stable category of maximal Cohen-Macaulay modules. In this section, we will give some background on them and study continuous functions. We start by recalling the theory of maximal Cohen-Macaulay modules over Gorenstein rings. We refer to \cite{B} for details. From now on, we will assume that all modules are finitely generated.
		
Let $R$ be a commutative Noetherian local ring and $M$ be an $R$-module. A sequence $ x_1, \ldots, x_n $ of elements from the maximal ideal of $R$ is called a \textit{regular sequence} on $M$ if $(x_1, \ldots,x_n)M \neq M$ and $x_{i+1}$ is a nonzerodivisor on $M/(x_1, \ldots, x_i)M$ for any $i = 1, \ldots, n$. The \textit{depth} of a nonzero module $M$ is defined as the maximum length of regular sequences on $M$. This number is bounded above by the Krull dimension of $ R $ unless $M$ is the zero module whose depth is $\infty$. An $R$-module is called a \textit{maximal Cohen-Macaulay} module if its depth is greater than or equal to the Krull dimension of $R$.
		
From now on, we will assume that the ring $R$ is (strongly) Gorenstein which means that the regular module $R$ has finite injective dimension. In this case, this number equals the Krull dimension of the ring. We will denote the full subcategory of maximal Cohen-Macaulay $R$-modules by $\MCM(R)$.

For two $R$-modules $M,N$, denote by $P(M,N)$ the set of all $R$-module homomorphisms from $M$ to $N$ which factor through a projective module. It is a submodule of $\Hom_R(M,N)$ and we denote the quotient by
		\begin{align*}
		\sHom_R(M,N) = \Hom_R(M,N) / P(M,N).
		\end{align*}
With $\sMCM(R)$, we denote the category whose objects are the same as $\MCM(R)$ and whose morphism sets are $\sHom_R(M,N)$. We call it the \textit{stable category of maximal Cohen-Macaulay modules}. Note that this construction identifies projective modules with the zero object. On this category the syzygy functor $\Omega_R$ has an inverse, namely the cosyzygy functor $\Omega_R^{-1}$. $\sMCM(R)$ is a triangulated category with shift functor $ \Omega_R^{-1} $ and we have the following equivalence of triangulated categories
		\begin{align*}
	\sMCM(R) \cong D_{sg}(R) := D^b(\module R)/\mathrm{perf}(R).
		\end{align*}
where $ D^b(R) $ is the bounded derived category and $\mathrm{perf}(R)$ is the subcategory of perfect complexes. The quotient category $D_{sg}(R)$ is called the singularity category and it is nontrivial if and only if $R$ is not regular. So, it measures how singular the corresponding space is.

We denote by $\sann_R M$ the \textit{stable annihilator} of $M$ defined as the annihilator of the stable endomorphism ring $\sEnd_R(M)$. We call an element $r \in R$ a \textit{cohomology annihilator} if it belongs to $\sann_R M$ for every $M \in \MCM(R)$. The ideal consisting of all cohomology annihilators is called the \textit{cohomology annihilator ideal} and we denote it by $\ca(R)$.

In this section, we are interested in the Alexandrov space of maximal Cohen-Macaulay modules. That is, we will consider the topology we defined in the previous section on the stable category of maximal Cohen-Macaulay modules over a Gorenstein ring $R$. We denote this space by $\MM(R)$. Theorem \ref{compact-theorem} reads as follows in this case: $\MM(R)$ is compact if and only if there is a maximal Cohen-Macaulay $R$-module $M$ such that $\ca(R) = \sann_R(M)$. The following example is our motivating example.

\begin{example}\label{example-dimension-one}
If $R$ is a one dimensional complete reduced Gorenstein local ring, then $\MM(R)$ is a compact topological space. In this case, the normalization $\overline{R}$ has the property $\sann_R \overline{R} = \ca(R)$ \cite[Theorem 4.3]{Esentepe}.
\end{example}

\begin{example}\label{example-higher-dimension}
Let $k$ be a field and $X = (x_{ij})$ be the generic $n \times n$ square matrix. Consider the $n^2 - 1$ dimensional hypersurface ring $R = k[X]/\det(X)$. Let $M = \coker X$. Then, $M$ is a maximal Cohen-Macaulay $R$-module and $\sann_R M = \ca(R)$ \cite[Example 3.4]{Esentepe}. So, $\MM(R)$ is a compact topological space.
\end{example}

\subsection{Cohomology Annihilators and Continuous Maps}

In this subsection, we will see the relation between cohomology annihilators and continuity. In particular, our purpose is to prove the following theorem. We assume that $R$ is a Gorenstein local ring.

\begin{theorem}\label{continuity-theorem}
Let $x \in \ca(R)$ be a nonzerodivisor. The function $\pi: \MM(R) \to \MM(R/xR)$ defined by $M \mapsto M/xM$ is continuous.
\end{theorem}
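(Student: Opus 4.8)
The plan is to verify that $\pi$ is well defined, reduce continuity to an implication between stable annihilators via Lemma~\ref{main-continuity-lemma}, and then establish the identity $\ann_R\sEnd_{R/xR}(M/xM)=\sann_R M$ for every $M\in\MCM(R)$, from which everything follows formally.

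First I would check well-definedness. Since $x$ is a nonzerodivisor, $R/xR$ is again a Gorenstein local ring. For $M\in\MCM(R)$ one has $\mathrm{Ass}_R M\subseteq\mathrm{Ass}_R R$, because over a Cohen--Macaulay local ring the associated primes of a maximal Cohen--Macaulay module are minimal primes of $R$; hence $x$ is a nonzerodivisor on $M$, so $\depth_{R/xR}M/xM=\depth_R M-1=\dim R-1=\dim R/xR$ and $M/xM\in\MCM(R/xR)$. If $f\colon M\to N$ factors through a free $R$-module then $f\otimes_R R/xR$ factors through a free $R/xR$-module, so reduction modulo $x$ descends to an additive functor $\sMCM(R)\to\sMCM(R/xR)$; in particular $\pi$ is well defined on isomorphism classes. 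By Lemma~\ref{main-continuity-lemma}, applied with $\TT=\sMCM(R)$ and $\mathcal{S}=\sMCM(R/xR)$, it then suffices to prove that $\sann_R M\subseteq\sann_R N$ implies $\sann_{R/xR}(M/xM)\subseteq\sann_{R/xR}(N/xN)$ for all $M,N\in\MCM(R)$.

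The core of the argument is the claim $\ann_R\sEnd_{R/xR}(M/xM)=\sann_R M$, an equality of ideals of $R$ each containing $x$ (the left one trivially, the right one because $x\in\ca(R)\subseteq\sann_R M$). Reduction modulo $x$ gives a unital homomorphism of $R$-algebras $\phi_M\colon\sEnd_R M\to\sEnd_{R/xR}(M/xM)$. For ``$\supseteq$'': if $r\in\sann_R M$ then $r\cdot\mathrm{id}_M=0$ in $\sEnd_R M$, so $r\cdot\mathrm{id}_{M/xM}=\phi_M(r\cdot\mathrm{id}_M)=0$, and since $\sEnd_{R/xR}(M/xM)$ is an $R$-algebra this forces $r$ to annihilate it. For ``$\subseteq$'' I would show $\phi_M$ is injective, which gives $\ann_R\sEnd_{R/xR}(M/xM)\subseteq\ann_R\sEnd_R M=\sann_R M$. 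So let $f\in\sEnd_R M$ lie in the kernel, meaning $f\otimes_R R/xR$ factors through a free $R/xR$-module, say as $\bar\beta\bar\alpha$ with $\bar\alpha\colon M/xM\to(R/xR)^k$ and $\bar\beta\colon(R/xR)^k\to M/xM$. One lifts $\bar\beta$ to some $\beta\colon R^k\to M$, and lifts $\bar\alpha$ composed with $M\twoheadrightarrow M/xM$ to some $\alpha\colon M\to R^k$; this is possible because the obstruction lives in $\Ext^1_R(M,(xR)^k)\cong\Ext^1_R(M,R)^k=0$, the vanishing being the standard fact that maximal Cohen--Macaulay modules over a Gorenstein ring have no higher extensions into $R$. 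Then $f-\beta\alpha$ has image in $xM$, so $f=\beta\alpha+xg$ for some $g\colon M\to M$ (dividing by $x$ is legitimate since $x$ is a nonzerodivisor on $M$); in $\sEnd_R M$ the term $\beta\alpha$ vanishes, being factored through a free module, and $xg=(x\cdot\mathrm{id}_M)\circ g=0$ because $x\cdot\mathrm{id}_M=0$ in $\sEnd_R M$ (as $x\in\ca(R)$). Hence $f=0$ in $\sEnd_R M$, and $\phi_M$ is injective.

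Finally I would assemble the pieces: under the inclusion-preserving bijection between ideals of $R/xR$ and ideals of $R$ containing $x$, the ideal $\sann_{R/xR}(M/xM)=\ann_{R/xR}\sEnd_{R/xR}(M/xM)$ corresponds to $\ann_R\sEnd_{R/xR}(M/xM)=\sann_R M$, and similarly $\sann_{R/xR}(N/xN)$ corresponds to $\sann_R N$ (using $x\in\ca(R)\subseteq\sann_R N$). Hence $\sann_R M\subseteq\sann_R N$ forces $\sann_{R/xR}(M/xM)\subseteq\sann_{R/xR}(N/xN)$, and Lemma~\ref{main-continuity-lemma} concludes the proof. I expect the one genuinely delicate point to be the injectivity of $\phi_M$: one must be sure the lift $\alpha$ exists — this is exactly where Gorensteinness, through $\Ext^1_R(M,R)=0$, enters — and that $f-\beta\alpha$ may be divided by $x$, after which the hypothesis $x\in\ca(R)$ carries the rest.
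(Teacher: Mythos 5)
Your proof is correct, and while it follows the paper's general reduction, the heart of the argument is genuinely different. Both proofs reduce, via Lemma \ref{continuity-lemma-1} and the easy inclusion $\pi(\sann_R M)\subseteq \sann_{R/xR}(M/xM)$ coming from functoriality of reduction (Lemma \ref{contunuity-lemma-2}), to the reverse inclusion $\sann_{R/xR}(M/xM)\subseteq\pi(\sann_R M)$. The paper obtains this by testing against $Z=\Omega_{R/xR}\pi\Omega_R^{-1}M$ and using the Dugas--Leuschke isomorphism $\Omega_R(M/xM)\cong M\oplus\Omega_R M$ to identify $\sHom_{R/xR}(Z,\pi M)$ with $\sHom_R(\Omega_R^{-1}M,M)\oplus\sEnd_R(M)$, so that any stable annihilator of $\pi M$ kills the summand $\sEnd_R(M)$. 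You instead prove that the reduction map $\phi_M\colon\sEnd_R M\to\sEnd_{R/xR}(M/xM)$ is injective, by lifting a factorization through a free $(R/xR)$-module to one through a free $R$-module up to an error with image in $xM$; the two ingredients are $\Ext^1_R(M,R)=0$ (Gorenstein plus maximal Cohen--Macaulay, to produce the lift $\alpha$) and $x\cdot\mathrm{id}_M=0$ in $\sEnd_R M$ (since $x\in\ca(R)$, to kill the error term $xg$, together with the fact that $P(M,M)$ is a two-sided ideal). Your route is more elementary --- it avoids the Dugas--Leuschke lemma and the chain of Ext identifications --- and it yields the slightly sharper conclusion that $\sEnd_R M$ embeds into $\sEnd_{R/xR}(M/xM)$ as an $R$-algebra, from which the equality of annihilators and hence continuity follow formally; the paper's route, by contrast, makes the structural source of the equality (the syzygy decomposition of $M/xM$) explicit. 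The delicate points you flag --- existence of $\alpha$, and division of $f-\beta\alpha$ by $x$ using that $x$ is a nonzerodivisor on the maximal Cohen--Macaulay module $M$ --- all check out.
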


The following lemma is the version of Lemma \ref{main-continuity-lemma} for our case.

\begin{lemma}\label{continuity-lemma-1}
Let $R$ and $S$ be two Gorenstein local rings and $\Phi: \MM(R) \to \MM(S)$ any function. Then, $\Phi$ is continuous if and only if for any two maximal Cohen-Macaulay $R$-modules $L, M$ we have
\begin{align*}
    \sann_R L \subseteq \sann_R M \implies \sann_S \Phi(L) \subseteq \sann_S \Phi(M).
\end{align*}
\end{lemma}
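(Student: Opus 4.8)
The plan is to recognize the statement as a direct specialization of Lemma~\ref{main-continuity-lemma}. First I would note that $\MM(R)$ is, by definition, the Alexandrov space $\alex(\sMCM(R))$ attached to the stable category of maximal Cohen-Macaulay $R$-modules, and that $\sMCM(R)$ is an $R$-linear category of the type axiomatized in Section~2: the stable Hom-sets $\sHom_R(M,N)$ are $R$-modules on which composition is $R$-bilinear, each $\sHom_R(M,N)$ is naturally an $\sEnd_R(M)$-$\sEnd_R(N)$-bimodule, finite direct sums exist, and the Krull-Remak-Schmidt property holds. The same applies to $\sMCM(S)$ over $S$. So we are genuinely in the setting of Section~2 with $\TT = \sMCM(R)$ and $\mathcal{S} = \sMCM(S)$.

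Second, I would match the abstract annihilator of Section~2 with the stable annihilator. For $M$ an object of $\sMCM(R)$, the endomorphism ring of $M$ in this category is exactly $\sEnd_R(M)$, so $\ann_R M = \ann_R \End_{\sMCM(R)}(M) = \ann_R \sEnd_R(M) = \sann_R M$. Hence the Section~2 preorder $L \leq M \iff \ann_R L \subseteq \ann_R M$ on $\alex(\sMCM(R))$ is precisely the preorder $L \leq M \iff \sann_R L \subseteq \sann_R M$, and likewise over $S$.

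With these two identifications, the lemma is Lemma~\ref{main-continuity-lemma} read verbatim: $\Phi \colon \alex(\sMCM(R)) \to \alex(\sMCM(S))$ is continuous if and only if it is monotone for the two preorders, which unwinds to exactly the displayed implication on stable annihilators. There is no substantive obstacle here; the only point meriting a word of care is verifying that $\sMCM(R)$ satisfies the running hypotheses of Section~2 — in particular the Krull-Remak-Schmidt condition, for which one restricts to $R$ complete (or Henselian) local — after which the proof is a one-line appeal to Lemma~\ref{main-continuity-lemma} together with the general fact, recalled just before it, that continuous maps between Alexandrov spaces are exactly the order-preserving maps of the underlying preordered sets.
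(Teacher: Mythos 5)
Your proposal is correct and matches the paper's approach exactly: the paper offers no separate proof of this lemma, presenting it as the direct specialization of Lemma~\ref{main-continuity-lemma} to $\TT = \sMCM(R)$ and $\mathcal{S} = \sMCM(S)$, with $\ann_R M = \sann_R M$. Your added remark about Krull--Remak--Schmidt requiring completeness or Henselianness is a reasonable point of care that the paper leaves implicit.
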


We will now consider such maps as in Lemma \ref{continuity-lemma-1} which are induced by ring homomorphisms. Let $R$ and $S$ be as above and consider a ring map $f: R \to S$. Then, $f$ induces a map $\Phi_f : \MM(R) \to \MM(S)$ which takes a maximal Cohen-Macaulay $R$-module $M$ to the MCM-approximation (or stabilization) of $M \otimes_R S$ in $\sMCM(S)$.

Suppose that we have a ring element $r$ which stably annihilates $M$. This means that the multiplication map by $r$ factors through a free $R$-module and we have a commutative diagram

\begin{align*}
 	\xymatrix{
 		M \ar[rr]^{r} \ar[dr]_{\alpha} && M  \\
 		& P \ar[ur]_{\beta} 
 	}
 \end{align*}
where $P$ is a free $R$-module. Tensoring with $S$, we have the commutative diagram
\begin{align*}
 	\xymatrix{
 		M \otimes_R S \ar[rr]^{f(r)} \ar[dr]_{\alpha \otimes_R S} && M\otimes_R S  \\
 		& Q \ar[ur]_{\beta \otimes_R S} 
 	}
 \end{align*}
 with $Q$ a free $S$-module. We can see this as a commutative diagram in the module category of $S$ or we can see it in the derived category of $S$. Note that $M \otimes_R S$ is not necessarily maximal Cohen-Macaulay as an $S$-module but we know that multiplication by $f(r)$ factors through a perfect complex. Hence, when we pass to the Verdier quotient $D_{sg}(R)$, multiplication by $f(r)$ induces the zero map. Using Buchweitz's equivalence, we can conclude that $f(r) \in \sann_S \Phi_f(M)$. We record this as our second lemma.
 
 \begin{lemma}\label{contunuity-lemma-2}
 Let $f: R \to S$ be a ring homomorphism of Gorenstein local rings and consider the map $\Phi_f: \MM(R) \to \MM(S)$ which takes $M$ to the MCM-approximation of $M \otimes_R S$. Then, we have
 \begin{align*}
     f(\sann_R M) \subseteq \sann_S \Phi_f(M)
 \end{align*}
 for any maximal Cohen-Macaulay module $M$.
 \end{lemma}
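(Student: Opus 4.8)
The plan is to unwind what $r \in \sann_R M$ means as a concrete factorization of a multiplication map, base change that factorization to $S$, and then read off the conclusion inside the singularity category $D_{sg}(S)$ via Buchweitz's equivalence. This is essentially the argument sketched in the paragraphs just before the statement; I would organize it in three moves and then dispose of a degenerate case.

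First I would fix $r \in \sann_R M = \ann_R \sEnd_R(M)$. Since $r$ kills the identity of $\sEnd_R(M)$, the map $r\cdot\mathrm{id}_M$ is zero in $\sEnd_R(M)$, i.e.\ it factors through a projective $R$-module, which (as $R$ is local) may be taken free, say $P$: one gets $R$-linear maps $\alpha\colon M\to P$ and $\beta\colon P\to M$ with $\beta\alpha = r\cdot\mathrm{id}_M$, the first diagram displayed above. Then I would apply $-\otimes_R S$. As $P$ is free over $R$, the module $Q := P\otimes_R S$ is free over $S$, and one obtains $S$-linear maps $\alpha\otimes_R S\colon M\otimes_R S\to Q$ and $\beta\otimes_R S\colon Q\to M\otimes_R S$ with composite multiplication by $f(r)$ on $M\otimes_R S$. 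Viewing finitely generated modules as complexes concentrated in degree $0$, this exhibits the endomorphism $f(r)\cdot\mathrm{id}_{M\otimes_R S}$ of the object $M\otimes_R S$ of $D^b(\module S)$ as factoring through the perfect complex $Q$.

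Next I would pass to the Verdier quotient $D_{sg}(S) = D^b(\module S)/\mathrm{perf}(S)$, where perfect complexes become zero; the factorization above then forces $f(r)\cdot\mathrm{id}_{M\otimes_R S} = 0$ in $\End_{D_{sg}(S)}(M\otimes_R S)$, and since this endomorphism ring is an $S$-algebra on which $f(r)$ acts through the identity, $f(r)$ must annihilate all of it. Finally I would invoke Buchweitz's equivalence $\sMCM(S)\simeq D_{sg}(S)$: it is $S$-linear and, precomposed with $\module S\hookrightarrow D^b(\module S)\to D_{sg}(S)$, it is the MCM-approximation functor, so it identifies $M\otimes_R S$ with $\Phi_f(M)$ and hence identifies $\End_{D_{sg}(S)}(M\otimes_R S)$ with $\sEnd_S(\Phi_f(M))$ as $S$-modules. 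Thus $f(r)\in\ann_S\sEnd_S(\Phi_f(M)) = \sann_S\Phi_f(M)$, and since $r$ was arbitrary and the target is an ideal, $f(\sann_R M)\subseteq\sann_S\Phi_f(M)$ (reading the left side as the image set or the ideal it generates, indifferently).

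The only genuinely delicate point will be the bookkeeping in the last step: one must make sure that base change followed by MCM-approximation really does correspond to the image of $M\otimes_R S$ in $D_{sg}(S)$, and that the $S$-module structures on the relevant $\Hom$-groups are matched by Buchweitz's equivalence, so that ``$f(r)$ kills the identity'' genuinely transports to $\sEnd_S(\Phi_f(M))$. The degenerate case in which $M\otimes_R S$ has finite projective dimension over $S$ --- which is exactly what happens in the setting of Theorem \ref{continuity-theorem} --- needs no separate argument: there $\Phi_f(M)$ is the zero object of $\sMCM(S)$, so $\sann_S\Phi_f(M) = S$ and the inclusion is automatic.
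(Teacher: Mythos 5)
Your proposal is correct and follows essentially the same route as the paper: factor $r\cdot\mathrm{id}_M$ through a free module, base change to $S$, observe that $f(r)\cdot\mathrm{id}_{M\otimes_R S}$ then factors through a perfect complex and hence vanishes in $D_{sg}(S)$, and transport back via Buchweitz's equivalence. Your added care about matching the MCM-approximation with the image in the singularity category, and the remark on the degenerate case, are sensible refinements of the same argument.
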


Now, we are ready to combine the two lemmas: Assume that the inclusion in Lemma \ref{contunuity-lemma-2} is actually an equality. Then, we would have an implication \begin{align*}
\sann_R L \subseteq \sann_R M \implies f(\sann_R L) \subseteq f(\sann_R M)
\end{align*}
which by our assumption would give
\begin{align*}
\sann_R L \subseteq \sann_R M \implies \sann_S \Phi_f(L) \subseteq \sann_S \Phi_f(M)
\end{align*}
concluding that $\Phi_f$ is continuous by Lemma \ref{continuity-lemma-1}. Let us record this.
\begin{proposition}\label{continuity-prop-3}
Let $f: R \to S$ and $\Phi_f$ be as above. If we have an equality
\begin{align*}
         f(\sann_R M) = \sann_S \Phi_f(M)
\end{align*}
for every maximal Cohen-Macaulay $R$-module, then $\Phi_f$ is continuous.
\end{proposition}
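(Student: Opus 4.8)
The plan is to deduce the proposition formally, by combining Lemma~\ref{continuity-lemma-1} with Lemma~\ref{contunuity-lemma-2} and using the hypothesized equality to promote the one-sided containment of Lemma~\ref{contunuity-lemma-2} into the implication that Lemma~\ref{continuity-lemma-1} demands.

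Concretely, I would start from two maximal Cohen-Macaulay $R$-modules $L$ and $M$ with $\sann_R L \subseteq \sann_R M$, and first note the elementary fact that the image under the set map $f$ preserves inclusions of subsets, so that $f(\sann_R L) \subseteq f(\sann_R M)$ as subsets of $S$ (no ideal-theoretic structure is needed here). Applying the hypothesis once for $L$ and once for $M$ to rewrite each side then gives $\sann_S \Phi_f(L) = f(\sann_R L) \subseteq f(\sann_R M) = \sann_S \Phi_f(M)$, which is precisely the implication appearing in Lemma~\ref{continuity-lemma-1}; hence $\Phi_f$ is continuous.

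There is essentially no obstacle in this argument: all of the genuine content has already been isolated, on the one hand in Lemma~\ref{contunuity-lemma-2} (the diagram chase showing $f(\sann_R M) \subseteq \sann_S \Phi_f(M)$ by tensoring a factorization of multiplication by $r$ through a free module, observing that it becomes a factorization through a perfect complex over $S$, and invoking Buchweitz's equivalence), and on the other hand in the hypothesis itself, which is the nontrivial input. The only point worth flagging is that this hypothesis can genuinely fail: base change may create stable endomorphisms of the MCM-approximation of $M \otimes_R S$ that do not come from stable endomorphisms of $M$, so the containment of Lemma~\ref{contunuity-lemma-2} is in general strict. Proposition~\ref{continuity-prop-3} records the clean situation where it is not, and the work still to be done — carried out in Theorem~\ref{continuity-theorem} — is to verify the equality in the special case $S = R/xR$ for a nonzerodivisor $x \in \ca(R)$.
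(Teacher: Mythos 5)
Your proof is correct and is essentially identical to the paper's own argument, which also combines Lemma~\ref{continuity-lemma-1} and Lemma~\ref{contunuity-lemma-2} by noting that $f$ preserves inclusions and then rewriting both sides via the hypothesized equality. Your remark that the equality can fail in general and is verified in Theorem~\ref{continuity-theorem} for $S = R/xR$ matches the paper's intent exactly.
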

This way, we have reduced our question to investigate the equality in the preceeding proposition. We will now give an example where this equality holds. 

The following lemma follows from \cite[Lemma 2.1, Lemma 2.2]{dugasleuschke} and we will need it for our example.
\begin{lemma}
If $R$ is a Gorenstein local ring and $x$ is a cohomology annihilator which is a nonzerodivisor, then for any maximal Cohen-Macaulay $R$-module $M$, we have
\begin{align*}
    \Omega_R(M/xM) \cong M \oplus \Omega_R(M).
\end{align*}
\end{lemma}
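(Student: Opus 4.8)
The plan is to cite \cite[Lemma 2.1, Lemma 2.2]{dugasleuschke} and extract the claimed isomorphism directly. Recall that when $x$ is a nonzerodivisor on $R$ and on $M$ (which holds since $M$ is maximal Cohen-Macaulay and $x$ lies in the maximal ideal — it is a cohomology annihilator, hence nonunit), one has the standard short exact sequence $0 \to M \xrightarrow{x} M \to M/xM \to 0$ of $R$-modules. Dugas--Leuschke analyze exactly this situation: they compute a free resolution of $M/xM$ over $R$ in terms of a free resolution of $M$, and the upshot is that the first syzygy $\Omega_R(M/xM)$ splits. The content of their lemmas, as I read it, is that multiplication by $x$ being null-homotopic on the resolution of $M$ — which is precisely the hypothesis $x \in \ca(R)$, equivalently $x \in \sann_R M$ — forces the mapping cone computing $M/xM$ to decompose, yielding $\Omega_R(M/xM) \cong M \oplus \Omega_R(M)$ in $\module R$ (or at least up to free summands, which suffices in $\sMCM(R)$, though the paper states an honest isomorphism of modules).

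Concretely, the first steps I would carry out: (1) note $x$ is a nonzerodivisor on $M$, so we get the SES above; (2) take a free resolution $P_\bullet \to M$, and observe that since $x$ annihilates $M$ in the singularity category, the chain map "multiplication by $x$" on $P_\bullet$ is homotopic to zero, i.e. there is a homotopy $s_\bullet$ with $x\cdot \mathrm{id} = d s + s d$; (3) use this homotopy to write down an explicit splitting of the degree-one part of the free resolution of $M/xM$ obtained by the horseshoe/mapping-cone construction applied to $0 \to M \xrightarrow{x} M \to M/xM \to 0$. The syzygy of $M/xM$ sits in an exact sequence involving $\Omega_R(M)$ and a copy of $M$ (the latter coming from the $M$ on the left of the SES, after one projective cover step), and the null-homotopy provides the section making this sequence split.

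The main obstacle — really the only nonroutine point — is verifying that the splitting is genuine at the level of modules rather than merely up to projective summands, and more importantly that invoking \cite{dugasleuschke} is legitimate: their lemmas may be stated under a slightly different hypothesis (e.g. $x$ a nonzerodivisor with $M/xM$ having finite projective dimension, or $x$ in the conductor) and one must check that "$x \in \ca(R)$" implies whatever exactly they require. Since $x \in \ca(R)$ gives $x \in \sann_R M$ for \emph{every} MCM module $M$, in particular the hypothesis needed for their computation holds uniformly, so this should go through cleanly. Thus the proof is essentially: reduce to the setup of \cite{dugasleuschke}, check the hypotheses match, and quote the conclusion.
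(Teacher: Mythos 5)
Your proposal is correct and takes essentially the same route as the paper: the paper offers no argument of its own, stating only that the lemma ``follows from \cite[Lemma 2.1, Lemma 2.2]{dugasleuschke},'' which is exactly the reduction you perform. Your description of the underlying mechanism --- the exact sequence $0 \to \Omega_R M \to \Omega_R(M/xM) \to M \to 0$ obtained from $0 \to M \xrightarrow{\;x\;} M \to M/xM \to 0$, which splits because $x \in \ca(R)$ makes multiplication by $x$ null-homotopic --- is the content of the cited lemmas.
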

Now, we consider a Gorenstein local ring $R$, a nonzerodivisor $x \in \ca(R)$ and the quotient map $\pi: R \to R/xR$. By abuse of notation, for an $R$-module $M$, we denote by $\pi M$ the $R/xR$-module $M/x M$. Then, for any maximal Cohen-Macaulay $R$-module $M$, we know that $\pi M$ is a maximal Cohen-Macaulay $R/xR$-module. Therefore, $\Phi_\pi(M) = \pi M = M \otimes_R (R/xR)$. Hence, by Proposition \ref{continuity-prop-3}, if we can show that $ \sann_{R/xR} \pi M \subseteq \pi(\sann_R M)$, then $\pi$ will be continuous.

\begin{lemma}
With the above notation, we have $\sann_{R/xR} \pi M \subseteq \pi(\sann_R M)$.
\end{lemma}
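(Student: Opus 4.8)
The plan is to show that if $\bar r \in R/xR$ stably annihilates $\pi M$ over $R/xR$, then we can lift $\bar r$ to an element $r \in R$ which stably annihilates $M$ over $R$; this immediately gives the desired inclusion $\sann_{R/xR} \pi M \subseteq \pi(\sann_R M)$. First I would unwind what it means for $\bar r$ to stably annihilate $\pi M$: multiplication by $\bar r$ on $\pi M$ factors through a free $R/xR$-module, i.e. through a projective object in $\MCM(R/xR)$. Equivalently, in the triangulated category $\sMCM(R/xR) \cong D_{sg}(R/xR)$, multiplication by $\bar r$ on $\pi M$ is the zero morphism. I would then want to transport this information back up to $R$. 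The natural tool is the relationship between syzygies over $R$ and over $R/xR$ supplied by the preceding lemma, namely $\Omega_R(\pi M) \cong M \oplus \Omega_R(M)$, together with the standard change-of-rings behavior: since $x$ is a nonzerodivisor in $R$ and $\pi M = M/xM = M \otimes_R R/xR$, the module $\pi M$ is maximal Cohen–Macaulay over $R/xR$, and there is a close comparison between $\sEnd_R(M)$ and $\sEnd_{R/xR}(\pi M)$.

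The key step will be to produce, from a stable factorization of $\bar r \cdot \mathrm{id}_{\pi M}$ through a free $R/xR$-module, a stable factorization of $r \cdot \mathrm{id}_M$ through a free $R$-module for some lift $r$ of $\bar r$. Concretely, I would apply $\Omega_R$ to the factorization diagram over $R/xR$: since $\Omega_R$ is additive and sends a free $R/xR$-module $F$ to $\Omega_R F$, and since $\Omega_R(\pi M) \cong M \oplus \Omega_R(M)$, applying $\Omega_R$ to the map "multiplication by $\bar r$" on $\pi M$ yields the map "multiplication by $r$" (for any lift $r$) on $M \oplus \Omega_R M$, which factors through $\Omega_R F$. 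Now $\Omega_R F$ — the syzygy of a free $R/xR$-module over $R$ — is, by the free resolution $0 \to R \xrightarrow{x} R \to R/xR \to 0$, simply a direct sum of copies of $R$; that is, $\Omega_R(R/xR) \cong R$. Hence multiplication by $r$ on $M \oplus \Omega_R M$ factors through a free $R$-module, so in particular $r$ stably annihilates $M$ (the summand $M$ inherits the factorization via the split inclusion and projection). Therefore $r \in \sann_R M$ and $\bar r = \pi(r) \in \pi(\sann_R M)$, which is exactly what we need.

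The main obstacle I anticipate is making the bookkeeping with $\Omega_R$ and the lift $r$ precise: I must check that applying $\Omega_R$ to "multiplication by $\bar r$ on $\pi M$" really does give "multiplication by a lift $r$ on $M \oplus \Omega_R M$" under the isomorphism $\Omega_R(\pi M) \cong M \oplus \Omega_R M$ — i.e. that the syzygy functor is compatible with the $R$-action in the way required, and that the choice of lift is harmless because any two lifts of $\bar r$ differ by a multiple of $x$, which is already zero on $\pi M$ but whose effect on $M$ must be tracked. An alternative, possibly cleaner, route avoiding the explicit diagram chase is to argue entirely in the singularity categories: multiplication by $\bar r$ on $\pi M$ being zero in $D_{sg}(R/xR)$, combined with the (derived) restriction-of-scalars functor $D_{sg}(R/xR) \to D_{sg}(R)$ along $\pi$ — which exists because $x$ is a cohomology annihilator, so $R/xR$ is perfect-ish in the relevant sense — should send $\pi M$ to something built from $M$ and $\Omega_R M$ and send multiplication by $\bar r$ to multiplication by $r$, again forcing $r \cdot \mathrm{id}_M = 0$ in $D_{sg}(R)$. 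I would present the first, hands-on argument as the main proof and perhaps remark on the categorical reformulation.
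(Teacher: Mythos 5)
Your argument is correct, and it takes a genuinely different route from the one in the paper. The paper's proof is a computation on the receiving end: it chooses the test object $Z = \Omega_{R/xR}\pi\Omega_R^{-1}M$ in $\MCM(R/xR)$ and, via the identification of stable Hom with $\Ext^1$, the change-of-rings isomorphism $\Ext^1_{R/xR}(N,M/xM)\cong\Ext^2_R(N,M)$, dimension shifting, and the splitting $\Omega_R(\pi N)\cong N\oplus\Omega_R N$, exhibits $\sEnd_R(M)$ as an $R$-module direct summand of $\sHom_{R/xR}(Z,\pi M)$; any $q\in\sann_{R/xR}(\pi M)$ kills the latter, hence the former. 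You instead work on the source of the annihilation: you take the factorization of $\bar r\cdot\mathrm{id}_{\pi M}$ through a free $R/xR$-module $F$, apply the syzygy functor $\Omega_R$ (which is $R$-linear on the stable category, so it sends multiplication by $r$ to multiplication by $r$), and use that $\Omega_R F$ is $R$-free because $\Omega_R(R/xR)\cong R$, together with the same key splitting $\Omega_R(\pi M)\cong M\oplus\Omega_R M$, to conclude $r\in\sann_R(M\oplus\Omega_R M)=\sann_R M$. Both proofs rest on the Dugas--Leuschke lemma; yours trades the Ext bookkeeping for a direct diagram argument in $\smodule R$ and is arguably more elementary, while the paper's version packages the comparison into a single chain of natural isomorphisms. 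The two caveats you flag resolve exactly as you suspect: any two lifts of $\bar r$ act identically on $\pi M$ (so the induced map on $\Omega_R(\pi M)$ is unambiguous), and since multiplication by a ring element is central, it commutes with the splitting isomorphism, so the summand $M$ does inherit the stable vanishing.
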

\begin{proof}
Let $M$ a maximal Cohen-Macaulay $R$-module. Firstly, if $q \in \sann_{R/xR}(\pi M)$, then $q \in \ann_{R/xR}\sHom_{R/xR}(Z,\pi M) $ for any $Z \in \MCM(R/xR)$. In particular, we can take $Z = \Omega_{R/xR}\pi \Omega_R^{-1}M$. Then, $Z$ is still maximal Cohen-Macaulay as an $R/xR$-module. Now,
\begin{align*}
    \sHom_{R/xR}(Z,\pi M) & \cong \Ext_{R/xR}^1(\pi \Omega_R^{-1} M, \pi M) \cong \Ext_R^2(\pi \Omega_R^{-1}M, M)  \\ & \cong \Ext_R^1(\Omega_R \pi \Omega_R^{-1}M, M) \cong \Ext_R^1( \Omega_R^{-1}M \oplus M, M) \\
    & \cong \sHom_R(\Omega_R^{-1}M,M) \oplus \sEnd_R(M).
\end{align*}
We know that $q$ annihilates $\sEnd_{R/xR}(\pi M)$ as an $R$-module. As a result, we get that 
\begin{align*}
    q \in \ann_R [ \sHom_R(\Omega_R^{-1}M,M) \oplus \sEnd_R(M) ] = \ann_R \sEnd_R(M) = \sann_R M
\end{align*}
as required.
\end{proof}
Thus, we have proved Theorem \ref{continuity-theorem}.

We finish this subsection by noting other continuous functions which are not very interesting. For example, the functions $\MM(R) \to \MM(R)$ defined by $M \mapsto \Omega_R M$ and $M \mapsto \Hom_R(M,R)$ are continuous but they are not very interesting as maps between topological spaces because they fix closed sets, hence trivially continuous. Indeed, as previously discussed, we have equalities $\sann_R M = \sann_R \Omega_RM = \sann_R\Hom_R(M,R)$. On the other hand, for a fixed maximal Cohen-Macaulay module $X$, the map $M \mapsto M \oplus X$ is continuous as $\sann_R L \subseteq \sann_R M$ implies $\sann_R L \cap \sann_R X \subseteq \sann_R M \cap \sann_R X$. And if $f$ and $g$ are continuous functions, the map $f \oplus g$ taking $M$ to $f(M) \oplus g(M)$ is also continuous. This follows once again from the relationship between stable annihilator ideals and direct sums.

\section{Orders and Cohen-Macaulay Modules}

Recall that our motivating question was whether there exists a single maximal Cohen-Macaulay module $M$ such that $\ca(R) = \sann_R(M)$ which was the case for a one dimensional Gorenstein ring $R$ (see Example \ref{example-dimension-one}). Indeed, we saw that in this case we can take $M = \overline{R}$, the normalization of $R$. This is not only a single maximal Cohen-Macaulay $R$-module but a module finite $R$-algebra (of global dimension 1) which turns out to be maximal Cohen-Macaulay as an $R$-module. In this section, we will investigate whether this can be generalized.

Recall that a module-finite algebra over a Cohen-Macaulay local ring $R$ is called an $R$-\textit{order} if it is maximal Cohen-Macaulay as an $R$-module. We will always assume that our orders have finite injective dimension as modules on both sides. Hence, they will be \textit{Iwanaga-Gorenstein} rings.

For the rest of this section, we will assume that $\Lambda$ is an $R$-order where $R$ is a Gorenstein local ring. We consider two subcategories of $\module \Lambda$ - the category of finitely generated $\Lambda$-modules. Firstly, we consider $\MCM(\Lambda)$: the category of maximal Cohen-Macaulay $\Lambda$-modules. A $\Lambda$-module $X$ is called \textit{maximal Cohen-Macaulay} if $\Ext_\Lambda^i(X,\Lambda) = 0$ for all $i > 0$. As in the commutative case, this is a Frobenius category and its stable category is naturally triangulated and equivalent to the singularity category of $\Lambda$. Next, we consider $\ZCM(\Lambda)$: the category of \textit{centrally Cohen-Macaulay} $\Lambda$-modules. A $\Lambda$-module $X$ is called \textit{centrally Cohen-Macaulay} if it is maximal Cohen-Macaulay as an $R$-module. We note that $\ZCM(\Lambda)$ is independent of the Cohen-Macaulay local ring $R$ over which $\Lambda$ is a module-finite algebra. We say that $\Lambda$ is an $n$-\textit{canonical order} if its \textit{canonical module} $\omega = \Hom_R(\Lambda, \omega_R)$ has projective dimension $n$ as a $\Lambda$-module. Note that $\omega_R \cong R$ in our case as we assumed $R$ is Gorenstein. We write $D(-)$ for the functor $\Hom_R(-,R)$ so that $\omega= D\Lambda$.

Every maximal Cohen-Macaulay $\Lambda$-module is also centrally Cohen-Macaulay and when $\Lambda$ is an $n$-canonical order, there is an equality $\MCM(\Lambda) = \Omega_\Lambda^n \ZCM(\Lambda)$ where $\Omega_\Lambda$ is the syzygy operator on $\module \Lambda$. When $\Lambda$ is $0$-canonical, we call it a \textit{Gorenstein order}. In this case, maximal Cohen-Macaulay and centrally Cohen-Macaulay modules coincide. When $\Lambda$ is an $n$-canonical order of finite global dimension, then $\gldim \Lambda = \dim R + n$ and in this case every maximal Cohen-Macaulay $\Lambda$-module is projective. When $\Lambda$ is a Gorenstein order of finite global dimension, then every centrally Cohen-Macaulay $\Lambda$-module is projective, as well, and in this case $\Lambda$ is called a \textit{nonsingular order}. In this sense, maximal Cohen-Macaulay modules measure the \textit{singularity} of $\Lambda$ by finiteness of global dimension whereas centrally Cohen-Macaulay modules measure whether $\Lambda$ has the lowest possible global dimension.

\subsection{Dimension Two}

Let us restrict ourselves to the case of Krull dimension two. That is, for this subsection, let us assume that $R$ is a two dimensional Gorenstein local ring. In this case, assume that we are given a short exact sequence
\begin{align*}
    0 \to X \to Y \to Z \to 0
\end{align*}
of $R$-modules with $Y \in \MCM(R)$. Then, applying $D$ to this exact sequence, we get a long exact sequence
\begin{align*}
    0 \to DZ \to DY \to DX \to \Ext_R^1(Z,R) \to 0
\end{align*}
since $\Ext_R^1(Y,R) = 0$ and thus, by depth lemma, we conclude $DZ$ has depth at least $2$ proving that $DZ$ is maximal Cohen-Macaulay for any $R$-module $Z$.

\begin{proposition}\label{order-proposition}
Let $R$ be a two dimensional Gorenstein local ring and $\Lambda$ be an $R$-order such that the structure map $R \to \Lambda$ is a split monomorphism. Then, for any closed subset $\XX \subseteq \MM(R)$, we have
\begin{align*}
    \ZCM \Lambda \cap \XX \neq \emptyset.
\end{align*}
\end{proposition}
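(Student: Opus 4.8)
The plan is to produce, for every nonempty closed subset $\XX\subseteq\MM(R)$, an explicit centrally Cohen--Macaulay $\Lambda$-module representing a point of $\XX$. Pick a maximal Cohen--Macaulay $R$-module $M$ representing a point of $\XX$; since $\XX$ is closed we have $\cl(M)\subseteq\XX$, so it is enough to exhibit a centrally Cohen--Macaulay $\Lambda$-module whose underlying $R$-module represents a point of $\cl(M)$. The module I would use is the coinduced module $N:=\Hom_R(\Lambda,M)$, which carries a natural left $\Lambda$-module structure since $\Hom_R(\Lambda,-)$ is right adjoint to restriction of scalars along the structure map $R\to\Lambda$.

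First I would check that $N\in\ZCM\Lambda$, i.e.\ that $N$ is maximal Cohen--Macaulay as an $R$-module. Because $M$ is maximal Cohen--Macaulay over the Gorenstein ring $R$ it is reflexive, $M\cong DDM$, so by Hom--tensor adjunction $N\cong\Hom_R(\Lambda,DDM)\cong D(\Lambda\otimes_R DM)$; since $\Lambda\otimes_R DM$ is a finitely generated $R$-module, applying $D$ yields a maximal Cohen--Macaulay $R$-module by the observation recorded in the paragraph preceding the proposition. (Alternatively one reruns that depth computation directly with coefficients in $M$: from a finite free presentation $R^a\to R^b\to\Lambda\to 0$ and the functor $\Hom_R(-,M)$ one gets an exact sequence $0\to N\to M^b\to M^a$, and the depth lemma applied to $0\to N\to M^b\to I\to 0$, with $I$ the image, forces $\depth N\ge 2$.)

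Next I would use the splitting hypothesis to place $N$ in $\cl(M)$. Since $R\to\Lambda$ is a split monomorphism of $R$-modules, $\Lambda\cong R\oplus C$ as $R$-modules for some $R$-module $C$, and therefore $N\cong M\oplus\Hom_R(C,M)$ as $R$-modules, hence also in $\sMCM(R)$. By Lemma~\ref{basic-prop-lemma} this gives $\sann_R N=\sann_R M\cap\sann_R\Hom_R(C,M)\subseteq\sann_R M$, that is, $N\in\cl(M)$. Combining the two steps, the $\Lambda$-module $N$ is centrally Cohen--Macaulay and represents a point of $\cl(M)\subseteq\XX$, so $\ZCM\Lambda\cap\XX\neq\emptyset$.

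I do not expect a serious obstacle here: once the coinduced module $\Hom_R(\Lambda,M)$ is singled out, the two steps are short. The only points needing a line of care are the reflexivity/adjunction identification showing $N$ is maximal Cohen--Macaulay over $R$ (or, in the alternative route, the degenerate case $I=0$ and the fact that a nonzero submodule of a module of positive depth has positive depth), and the remark that the split-monomorphism hypothesis is exactly what supplies the $R$-module decomposition $\Lambda\cong R\oplus C$. The one genuine design choice is to coinduce rather than to tensor: $\Lambda\otimes_R M$ also has $M$ as an $R$-module summand, but its depth over $R$ is not controlled by the hypotheses, whereas $\Hom_R(\Lambda,M)$ stays maximal Cohen--Macaulay precisely because $\dim R=2$.
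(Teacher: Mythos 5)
Your argument is correct, and it reaches the goal by a genuinely different construction than the paper's. The paper also starts from a point $M$ of $\XX$ and uses the split monomorphism to make $M$ an $R$-module direct summand, but its witness is the \emph{induced} module $M\otimes_R\Lambda$, whose depth over $R$ is then repaired by passing to the reflexive hull $D^2(M\otimes_R\Lambda)$; the dimension-two observation that $DZ$ is maximal Cohen--Macaulay for every finitely generated $Z$ is what makes that repair work, and additivity of $D^2$ together with $M\cong D^2M$ keeps $M$ as a summand. You instead \emph{coinduce}, taking $N=\Hom_R(\Lambda,M)$, which is already maximal Cohen--Macaulay over $R$ --- either via your identification $N\cong D(\Lambda\otimes_R DM)$, which reduces to the same dimension-two observation, or via your free-presentation-plus-depth-lemma argument, which is self-contained and bypasses duality entirely. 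The splitting $\Lambda\cong R\oplus C$ then gives $N\cong M\oplus\Hom_R(C,M)$ directly, so no reflexivization step is needed, and Lemma~\ref{basic-prop-lemma} places $N$ in $\cl(M)\subseteq\XX$ exactly as in the paper. The two witnesses are duality-related rather than identical; your route trades the reflexive-hull step for the choice of the right adjoint at the outset, and your alternative depth computation is the more elementary of the available verifications. (Both proofs, yours and the paper's, tacitly read ``closed subset'' as ``nonempty closed subset,'' since the empty set is closed in this topology; you at least say so explicitly.)
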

\begin{proof}
Let $M$ be a maximal Cohen-Macaulay $R$-module and consider the $\Lambda$-module $M \otimes_R \Lambda$. Since the structure map is a split monomorphism, $R$ is a direct summand in $\Lambda$ and thus, $M$ is a direct summand in $M \otimes_R \Lambda$ as an $R$-module. Note that $M \otimes_R \Lambda$ is not necessarily maximal Cohen-Macaulay as an $R$-module. So, we consider the reflexive hulls $D^2M \cong M$ and $D^2(M \otimes_R \Lambda)$ which we know to be maximal Cohen-Macaulay by our discussion above. Since $D^2(M \otimes_R \Lambda)$ is a $\Lambda$-module which is maximal Cohen-Macaulay as an $R$-module it belongs to $\ZCM \Lambda$. Moreover, since $M$ is a direct summand of $D^2(M \otimes_R \Lambda)$, we see that by Lemma \ref{basic-prop-lemma}
\begin{align*}
    D^2(M \otimes_R \Lambda) \in \cl(M).
\end{align*}
This shows that $\ZCM \Lambda \cap \cl(M) \neq \emptyset$ for any $M \in \MCM(R)$. Since closed sets are unions of sets of the form $\cl(M)$, we are done.
\end{proof}

Now, we have a two dimensional analogue of Example \ref{example-dimension-one}.

\begin{corollary}\label{order-corollary}
Let $R$ be a two dimensional Gorenstein local ring and $\Lambda$ be a nonsingular $R$-order such that the structure map $R \to \Lambda$ is a split monomorphism. Then, for any closed subset $\XX \subseteq \MM(R)$, we have $\Lambda \in \XX$.
\end{corollary}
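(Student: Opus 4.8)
The plan is to combine Proposition~\ref{order-proposition} with the structural fact recalled above that over a nonsingular order every centrally Cohen-Macaulay module is projective. So let $\XX \subseteq \MM(R)$ be a nonempty closed subset. By Proposition~\ref{order-proposition} we may choose a module $P$ lying in $\ZCM\Lambda \cap \XX$. Since $\Lambda$ is a nonsingular $R$-order, $P$ is projective as a $\Lambda$-module, hence a direct summand of $\Lambda^{n}$ for some $n$ in $\module\Lambda$; forgetting the $\Lambda$-module structure, $P$ is then an $R$-module summand of $\Lambda^{n}$. As $P$ and $\Lambda$ are both objects of $\sMCM(R)$ (the former because it is centrally Cohen-Macaulay, the latter because $\Lambda$ is an order), this says precisely that $P \in \add\Lambda$, computed in $\sMCM(R)$.

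Next I would apply Lemma~\ref{basic-prop-lemma}(2) with the roles of $M$ and $N$ played by $\Lambda$ and $P$ respectively: from $P \in \add\Lambda$ we obtain $\Lambda \in \cl(P)$. Finally, since $\XX$ is closed and $P \in \XX$, the defining property of the Alexandrov topology gives $\cl(P) \subseteq \XX$, and therefore $\Lambda \in \XX$, which is exactly the assertion.

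I do not expect a genuine obstacle here: the corollary is essentially a repackaging of Proposition~\ref{order-proposition}, with the nonsingularity hypothesis used only to upgrade ``some object of $\ZCM\Lambda$ lies in $\XX$'' to ``$\Lambda$ itself lies in $\XX$.'' The one step deserving a sentence of care is the passage from projectivity over $\Lambda$ to a summand relation over $R$, which is immediate from forgetting the module structure and from the fact that direct sums and summands in $\sMCM(R)$ are the underlying $R$-module ones; one might also remark that this is the two-dimensional counterpart of Example~\ref{example-dimension-one}, with the role of the normalization $\overline{R}$ now played by a nonsingular order.
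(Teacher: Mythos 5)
Your proof is correct and is exactly the argument the paper leaves implicit (the corollary is stated without proof): apply Proposition~\ref{order-proposition} to get some $P \in \ZCM\Lambda \cap \XX$, use nonsingularity to see $P$ is a summand of $\Lambda^n$ and hence $P \in \add\Lambda$ as $R$-modules, and conclude $\Lambda \in \cl(P) \subseteq \XX$ via Lemma~\ref{basic-prop-lemma}(2). Your restriction to nonempty $\XX$ is the right reading of the statement, since the empty set is closed and the claim is vacuously impossible for it.
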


In terms of our topological spaces, we have the following corollary.

\begin{theorem}\label{example1}
If $R$ is a two dimensional Gorenstein local ring admitting a nonsingular order for which the structure map is a split monomorphism, then $\MM(R)$ is compact.
\end{theorem}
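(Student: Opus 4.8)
The plan is to deduce Theorem \ref{example1} as an immediate consequence of Corollary \ref{order-corollary} together with Theorem \ref{compact-theorem}. Recall that Theorem \ref{compact-theorem} says $\MM(R) = \alex(\sMCM(R))$ is compact precisely when there is a single object $M$ with $\ann\TT = \ann_R M$, i.e. in the present language, when there is a maximal Cohen-Macaulay $R$-module $M$ with $\sann_R M = \ca(R)$. So it suffices to exhibit such an $M$.

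First I would invoke the hypothesis: $R$ is a two dimensional Gorenstein local ring admitting a nonsingular $R$-order $\Lambda$ whose structure map $R \to \Lambda$ is a split monomorphism. Then Corollary \ref{order-corollary} applies verbatim and tells us that $\Lambda$ (regarded, via $\ZCM\Lambda \subseteq \MM(R)$, as an object of the stable category of maximal Cohen-Macaulay $R$-modules) lies in \emph{every} nonempty closed subset $\XX \subseteq \MM(R)$. In particular, $\Lambda$ lies in the intersection of all closed subsets of $\MM(R)$. Since each $\cl(M)$ is closed and $\cl(M) = \{L : \sann_R L \subseteq \sann_R M\}$, membership $\Lambda \in \cl(M)$ for all $M$ translates into $\sann_R \Lambda \subseteq \sann_R M$ for every $M \in \MCM(R)$; hence $\sann_R \Lambda \subseteq \bigcap_{M} \sann_R M = \ca(R)$. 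The reverse inclusion $\ca(R) \subseteq \sann_R \Lambda$ is automatic from the definition of the cohomology annihilator ideal (every cohomology annihilator stably annihilates every MCM module, $\Lambda$ included). Therefore $\sann_R \Lambda = \ca(R)$, and Theorem \ref{compact-theorem} yields that $\MM(R)$ is compact.

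Alternatively, and perhaps more transparently, I would argue purely topologically: Corollary \ref{order-corollary} shows the point $[\Lambda] \in \MM(R)$ belongs to every nonempty closed set. Any open cover $\{U_i\}$ of $\MM(R)$ has some $U_{i_0} \ni [\Lambda]$; its complement $\MM(R) \setminus U_{i_0}$ is a closed set not containing $[\Lambda]$, hence must be empty, so $U_{i_0} = \MM(R)$ and the single set $U_{i_0}$ is a finite subcover. Either route finishes the proof in a line or two.

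I do not anticipate a genuine obstacle here: all the substantive work — producing a $\ZCM\Lambda$-object inside each $\cl(M)$ via the reflexive hull $D^2(M \otimes_R \Lambda)$ and the depth argument in dimension two — has already been carried out in Proposition \ref{order-proposition} and Corollary \ref{order-corollary}. The only point that merits a sentence of care is the identification of the conclusion of Corollary \ref{order-corollary} ($\Lambda$ in every closed set) with the hypothesis of Theorem \ref{compact-theorem} (existence of $M$ with $\ann\TT = \ann_R M$), i.e. checking that ``lies in every closed set'' is the same as ``is a minimum for the preorder'', which is exactly the content of the discussion preceding Theorem \ref{compact-theorem}. So the proof is essentially a bookkeeping assembly of earlier results rather than a new argument.
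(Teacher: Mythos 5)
Your proposal is correct and follows the paper's own route: the paper presents Theorem \ref{example1} as an immediate consequence of Corollary \ref{order-corollary} combined with Theorem \ref{compact-theorem}, exactly as you assemble it. The translation you spell out — ``$\Lambda$ lies in every closed set'' gives $\sann_R\Lambda \subseteq \ca(R)$, with the reverse inclusion automatic — is the intended (and only) bookkeeping step.
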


\subsection{Variations of Our Topology}

In order to prove analogues of Example \ref{example-dimension-one}, Proposition \ref{order-proposition} or Corollary \ref{order-corollary} in higher Krull dimensions, we will change the definition of our topology a little bit. Let $R$ be a Gorenstein local ring and $M$ be a maximal Cohen-Macaulay $R$-module. We start with defining a new operator
\begin{align*}
    \cl_n(M) = \left\lbrace L \in \sMCM(R) \colon (\sann_R L)^n \subseteq \sann_RM \right\rbrace
\end{align*}
for any positive integer $n$. Note that we have $\cl_n(M) \subseteq \cl_{n+1}(M)$ for any $n$, so we are making our closed sets bigger by considering the operator $\cl_n$ for $n \geq 2$. Earlier, we were interested in the case whether the intersection of all closed sets is nonempty. By making our closed sets bigger, we increase the likelihood of this. Of course, this comes with the price of weakening our motivating question.

We do not define the closure of a set as the union of $\cl_n(M)$ where the union is taken over objects $M$ in the set. The main reason for this is that $\cl_n$ does not define a Kuratowski closure operator in that case as $\cl_n \circ \cl_n \neq \cl_n$. Put in other words, the relation given by $M \leq N$ if and only if $(\sann_R M)^n \subseteq \sann_R N$ is not transitive.

\begin{definition}
We define the topological space $\MM(n,R)$ as the space whose closed sets are generated by closed sets of the form $\cl_n(M)$ with $M \in \MCM(R)$.
\end{definition}

We start by noting that although the first three assertions in Lemma \ref{basic-prop-lemma} hold for $\cl_n$, the last assertion may fail. That is, if $N \in \cl_n(M)$, this does not necessarily mean that $\cl_n(N) \subseteq \cl_n(M)$. We can only deduce that $\cl_{n^2}(N) \subseteq \cl_n(M)$. We will give examples of this in the next section. However, we note that this has the consequence that $\cl_n(N)$ is not necessarily the (topological) closure of $N$ in $\MM(n, R)$. Indeed, suppose that we have $N \in \cl_n(M)$. Then, we have $N \in \cl_n(N) \cap \cl_n(M)$. This intersection is a closed set that contains $N$ but it \textit{may} be strictly smaller than $\cl_n(N)$. Because we may have $\cl_n(N) \nsubseteq \cl_n(M)$.

Our proof for Lemma \ref{closed-sets-intersect} does not work for the reason stated in the previous paragraph. However, we have the following version of it: Let $n$ be a positive integer. For any two maximal Cohen-Macaulay $R$-modules $M$ and $N$, we have $\cl_n(N) \cap \cl_n(M) \neq 0$. Indeed, $M \oplus N$ belongs to the intersection. Consequently, the collection of all closed subsets of the form $\cl_n(M)$ has the finite intersection property. Moreover, we note that if there exists a single object $X$ such that $X \in \cl_n(M)$ for all $M$, then $X$ belongs to every closed set. Combining all this information, we have the following version of Theorem \ref{compact-theorem}.

\begin{theorem}
The topological space $\MM(n,R)$ is compact if and only if there exists a maximal Cohen-Macaulay $R$-module $X$ with $(\sann_R X)^n \subseteq \ca(R)$.
\end{theorem}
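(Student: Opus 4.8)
The plan is to run the finite-intersection-property argument behind Theorem~\ref{compact-theorem} almost verbatim, leaning on the two facts recorded just before the statement: that the sets $\cl_n(M)$, $M \in \MCM(R)$, have the finite intersection property, and that a single object lying in all of them lies in every nonempty closed set of $\MM(n,R)$. The only translation needed is between the condition ``$X \in \cl_n(M)$ for every $M$'' and the condition ``$(\sann_R X)^n \subseteq \ca(R)$'', which I would extract from the definition $\ca(R) = \bigcap_{M \in \MCM(R)} \sann_R M$ together with the triviality $I^n \subseteq I$ for any ideal $I$ and $n \geq 1$.

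For the ``if'' direction I would argue as follows. Suppose $X \in \MCM(R)$ satisfies $(\sann_R X)^n \subseteq \ca(R)$. Then for every $M \in \MCM(R)$ one has $(\sann_R X)^n \subseteq \ca(R) \subseteq \sann_R M$, i.e. $X \in \cl_n(M)$; by the observation above, $X$ lies in every nonempty closed subset of $\MM(n,R)$. Given any family $\{C_\alpha\}$ of closed sets with the finite intersection property, each $C_\alpha$ is nonempty (consider one-element subfamilies), so $X \in \bigcap_\alpha C_\alpha \neq \emptyset$; by the finite-intersection-property criterion for compactness, $\MM(n,R)$ is compact.

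For the converse I would apply compactness to the specific family $\{\cl_n(M) : M \in \MCM(R)\}$. This family has the finite intersection property because for $M_1, \dots, M_k \in \MCM(R)$ the module $M_1 \oplus \cdots \oplus M_k$ lies in each $\cl_n(M_i)$: one has $\sann_R(M_1 \oplus \cdots \oplus M_k) = \bigcap_j \sann_R M_j \subseteq \sann_R M_i$, hence $(\sann_R(M_1 \oplus \cdots \oplus M_k))^n \subseteq (\sann_R M_i)^n \subseteq \sann_R M_i$. Compactness then yields $\bigcap_{M \in \MCM(R)} \cl_n(M) \neq \emptyset$, and any $X$ in this intersection satisfies $(\sann_R X)^n \subseteq \sann_R M$ for all $M$, hence $(\sann_R X)^n \subseteq \bigcap_{M \in \MCM(R)} \sann_R M = \ca(R)$, as required.

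I do not expect a genuine obstacle here: the substantive work --- the finite intersection property of the $\cl_n(M)$ and the fact that a common member of all of them lies in every nonempty closed set --- is exactly what was set up in the paragraph preceding the statement, so what remains is bookkeeping. The only points I would be careful about are the (harmless) manipulations with $n$-th powers of ideals and the remark that a family with the finite intersection property automatically consists of nonempty sets, which is precisely what makes the ``if'' direction go through.
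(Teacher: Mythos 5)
Your proof is correct and follows exactly the route the paper intends: the paper itself gives no written proof beyond the paragraph establishing that the sets $\cl_n(M)$ have the finite intersection property and that a common member of all of them lies in every nonempty closed set, and your argument is precisely the bookkeeping that combines these facts with the finite-intersection-property characterization of compactness and the identity $\ca(R) = \bigcap_{M} \sann_R M$.
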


Let $M$ be a maximal Cohen-Macaulay module. By the \textit{generation time} of $M$, we mean the number of cones required to generate $\sMCM(R)$, up to syzygies and direct summands and we denote it by $g(M)$. If there is an $M$ with finite generation time, then by definition $g(M)$ is an upper bound for the \textit{dimension} of $\sMCM(R)$, a notion introduced by Raphael Rouquier \cite{Rouquier}. It is proved in \cite[Section 2.1]{Esentepe} that if $M$ is a maximal Cohen-Macaulay $R$-module with finite generation time, then $(\sann_R M)^{g(M)} \subseteq \ca(R)$. The proof depends on the following lemma which we will record as it is needed later.

\begin{lemma}\label{sann-lemma}
  Let $M_1,M_2,M_3$ be maximal Cohen-Macaulay $R$-modules. If there is a short exact sequence $0 \to M_1 \to M_2 \to M_3 \to 0$ of $R$-modules, then
      \begin{align*}
      \sann_R M_i \cdot \sann_R M_j \subseteq \sann_R M_k 
      \end{align*}
      where $i,j,k$ are distinct elements of the set $\{1,2,3\}$.
\end{lemma}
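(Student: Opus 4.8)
The plan is to exploit the long exact sequences in $\sHom_R(-,N)$ (equivalently $\Ext_R^{\ge 0}$) obtained from the short exact sequence $0 \to M_1 \to M_2 \to M_3 \to 0$, together with the standard fact that in the stable category of an $R$-linear triangulated category any element of $\sann_R X$ kills every Hom-set into and out of $X$. First I would recall that $\sann_R X = \ann_R \sEnd_R(X)$ acts as the zero map on all of $\sHom_R(X,-)$ and all of $\sHom_R(-,X)$: if $r\cdot 1_X$ factors through a projective, then $r\cdot f = f\circ(r\cdot 1_X)$ factors through a projective for any $f$ with source $X$, and symmetrically for targets. So for each index $i$, the element $r\in\sann_R M_i$ annihilates $\sHom_R(M_i, L)$ and $\sHom_R(L, M_i)$ for every $L\in\MCM(R)$.

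Next I would fix an arbitrary test module $L \in \MCM(R)$ and write down, for the pair $(i,j,k)$ in question, the portion of the long exact sequence relating $\sHom_R(M_i,L)$ or $\Ext$-groups of the three modules. The cleanest formulation is via the triangle $M_1 \to M_2 \to M_3 \to \Omega^{-1}M_1$ in $\sMCM(R)$: applying $\sHom_R(L,-)$ gives, for each $L$, an exact sequence
\begin{align*}
    \sHom_R(L,M_i) \to \sHom_R(L,M_k) \to \sHom_R(L,M_j)
\end{align*}
where the three positions are the appropriate rotation of the triangle (and similarly applying $\sHom_R(-,L)$). Now take $r\in\sann_R M_i$ and $s\in\sann_R M_j$. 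Multiplication by $r$ kills the left-hand term and multiplication by $s$ kills the right-hand term; hence $rs$ (or $sr$, the ring being commutative) kills the middle term $\sHom_R(L,M_k)$, because any element of the middle maps to $0$ in the right term after multiplying by $s$, so it comes from the left term, which is then killed by $r$. Since this holds for every $L\in\MCM(R)$, taking $L = M_k$ shows $rs$ annihilates $\sEnd_R(M_k)$, i.e. $rs \in \sann_R M_k$. This gives $\sann_R M_i \cdot \sann_R M_j \subseteq \sann_R M_k$. One then checks that each of the three choices of distinct $\{i,j,k\}$ corresponds to one of the three rotations of the triangle, so all three inclusions follow by the same argument.

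The only genuine point requiring care — and the step I would expect to be the mild obstacle — is making sure the three-term exact sequence is the correct rotation in each case and that no connecting-map index is off by one, i.e. verifying that for the rotation placing $M_k$ in the middle the two flanking terms really are $M_i$ and $M_j$ rather than syzygies or cosyzygies of them. This is harmless because, by Lemma \ref{basic-prop-lemma}(3), $\sann_R$ is invariant under $\Omega_R^{\pm 1}$, so even if a flanking term is $\Omega_R^{\pm1} M_i$ its stable annihilator is still $\sann_R M_i$ and the squeeze argument goes through verbatim. Everything else is the formal diagram chase above, applied over all test objects $L$.
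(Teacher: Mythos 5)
Your proof is correct and is essentially the standard argument — the paper itself gives no proof but defers to \cite{Esentepe}, where the same strategy is used: convert the short exact sequence into a triangle in $\sMCM(R)$, apply $\sHom_R(L,-)$ to a suitable rotation, and squeeze the middle term of the resulting three-term exact sequence, using shift-invariance of $\sann_R$ to handle the flanking (co)syzygies. No gaps.
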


As a corollary, we have the following.

\begin{corollary}\label{Rouquier-corollary}
 The topological space $\MM(n,R)$ is compact for any $n \geq \dim \sMCM(R)$.
\end{corollary}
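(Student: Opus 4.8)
The plan is to combine the criterion from the preceding theorem with the inequality $(\sann_R M)^{g(M)} \subseteq \ca(R)$ recorded above via Lemma~\ref{sann-lemma}. Fix $n \geq \dim \sMCM(R)$. By the characterization of compactness for $\MM(n,R)$, it suffices to exhibit a single maximal Cohen-Macaulay $R$-module $X$ with $(\sann_R X)^n \subseteq \ca(R)$. The natural candidate is a module realizing (or witnessing an upper bound for) the Rouquier dimension of $\sMCM(R)$: since $\dim \sMCM(R) \leq n < \infty$, there exists a maximal Cohen-Macaulay module $X$ whose generation time satisfies $g(X) \leq n$.

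First I would recall how $g(X) \leq n$ is used. Writing $X$ as a generator of $\sMCM(R)$ in $g(X)$ cones means every object is built from $\add(\Omega^{\ast} X)$ using at most $g(X)$ mapping cones, up to direct summands and syzygies. Running the argument of \cite[Section 2.1]{Esentepe}: each cone step contributes one factor of $\sann_R X$ to the stable annihilator product by Lemma~\ref{sann-lemma} (a triangle $X_1 \to X_2 \to X_3 \to \Omega^{-1}X_1$ gives $\sann_R X_i \cdot \sann_R X_j \subseteq \sann_R X_k$, and stable annihilators are unchanged by syzygy and direct summand by Lemma~\ref{basic-prop-lemma}), so after $g(X)$ steps one gets $(\sann_R X)^{g(X)} \subseteq \sann_R Z$ for every $Z \in \sMCM(R)$. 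Intersecting over all $Z$ yields $(\sann_R X)^{g(X)} \subseteq \ca(R)$. Since $g(X) \leq n$ and $(\sann_R X)^n \subseteq (\sann_R X)^{g(X)}$, we conclude $(\sann_R X)^n \subseteq \ca(R)$, which is exactly the condition needed for compactness of $\MM(n,R)$.

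The one genuinely delicate point is the existence of a module $X$ with $g(X) \leq n$. By definition of $\dim \sMCM(R)$ as an infimum of generation times, finiteness of this dimension guarantees some module $G$ with $g(G) = \dim \sMCM(R) \leq n$ (or, if one only knows the dimension is $\leq n$ without it being attained, a module $G$ with $g(G) \leq n$ still exists since the infimum of a set of integers bounded by $n$ contains an element $\leq n$). I would take $X = G$. Everything else is bookkeeping with Lemma~\ref{sann-lemma} and the two theorems quoted above, so the proof is short.

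\begin{proof}
Fix $n \geq \dim \sMCM(R)$. Since this dimension is finite and is defined as an infimum of generation times, there is a maximal Cohen-Macaulay $R$-module $X$ with $g(X) \leq n$. By the result of \cite[Section 2.1]{Esentepe} (which rests on Lemma~\ref{sann-lemma}), $(\sann_R X)^{g(X)} \subseteq \ca(R)$, and hence $(\sann_R X)^n \subseteq (\sann_R X)^{g(X)} \subseteq \ca(R)$. By the previous theorem, $\MM(n,R)$ is compact.
\end{proof}
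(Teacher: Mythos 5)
Your proof is correct and follows exactly the route the paper intends: the corollary is stated immediately after the compactness criterion for $\MM(n,R)$ and the fact from \cite[Section 2.1]{Esentepe} that $(\sann_R M)^{g(M)} \subseteq \ca(R)$, and combining these with a module $M$ of generation time at most $n$ is precisely the argument. Your extra care about the existence of such an $M$ (the dimension being an infimum of generation times, attained when finite) is a reasonable point to make explicit, but it does not change the approach.
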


\begin{example}
Let us assume that $R=(R, \m, k)$ is a $d$-dimensional equicharacteristic excellent Gorenstein local ring with an isolated singularity. In this case, the cohomology annihilator ideal is $\m$-primary and we know that $\Omega^d(k)$ is maximal Cohen-Macaulay with generation time at most $(\nu - d +1 ) + \ell\ell(R/\ca(R))$ where $\nu$ is the minimal number of generators for $\ca(R)$ and $\ell\ell$ denotes the \textit{Loewy length} \cite{Dao-Takahashi}. Hence, in this case, $\MM(n,R)$ is compact for any $n \geq (\nu - d +1 ) + \ell\ell(R/\ca(R))$.
\end{example}

\begin{proposition}
Let $\Lambda$ be an $R$-order and $M \in \ZCM \Lambda$. Then, for every positive integer $i$, we have an inclusion 
\begin{align*}
    (\sann_R \Lambda )^i \; . \;  \sann_R \Omega^i_\Lambda M \subseteq \sann_R M.
\end{align*}
\end{proposition}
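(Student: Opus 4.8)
The plan is to induct on $i$, using the short exact sequence relating $M$ to its syzygy over $\Lambda$ together with Lemma~\ref{sann-lemma}. The base case $i=1$ is the main content: I would start from a short exact sequence of $\Lambda$-modules
\begin{align*}
    0 \to \Omega_\Lambda M \to P \to M \to 0
\end{align*}
with $P$ a projective $\Lambda$-module. Since $M \in \ZCM\Lambda$, all three terms are maximal Cohen-Macaulay as $R$-modules, so this is a short exact sequence of maximal Cohen-Macaulay $R$-modules and Lemma~\ref{sann-lemma} applies: $\sann_R P \cdot \sann_R \Omega_\Lambda M \subseteq \sann_R M$. The remaining point is that $\sann_R \Lambda \subseteq \sann_R P$ for any projective $\Lambda$-module $P$; this should follow because $P$ is a direct summand of a finite direct sum of copies of $\Lambda$, and by the (first assertion of) Lemma~\ref{basic-prop-lemma} the stable annihilator of a direct sum is the intersection of the stable annihilators, while a direct summand only enlarges the stable annihilator. (One must be a little careful whether $\sann$ here means the $R$-annihilator of $\sEnd_\Lambda$ or of $\End_R$; in either reading the summand/direct-sum behavior gives $\sann_R\Lambda \subseteq \sann_R P$.) Combining, $\sann_R\Lambda \cdot \sann_R\Omega_\Lambda M \subseteq \sann_R P \cdot \sann_R\Omega_\Lambda M \subseteq \sann_R M$, which is the case $i=1$.

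For the inductive step, suppose the inclusion holds for $i$, i.e. $(\sann_R\Lambda)^i \cdot \sann_R\Omega_\Lambda^i M \subseteq \sann_R M$. Apply the case $i=1$ to the module $\Omega_\Lambda^i M$ (which is again in $\ZCM\Lambda$, being a syzygy of a centrally Cohen-Macaulay module over an order) to get $\sann_R\Lambda \cdot \sann_R\Omega_\Lambda^{i+1} M \subseteq \sann_R\Omega_\Lambda^i M$. Multiplying the two inclusions,
\begin{align*}
    (\sann_R\Lambda)^{i+1} \cdot \sann_R\Omega_\Lambda^{i+1} M \subseteq (\sann_R\Lambda)^i \cdot \sann_R\Omega_\Lambda^i M \subseteq \sann_R M,
\end{align*}
closing the induction.

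The step I expect to be the main obstacle is the clean justification that $\sann_R\Lambda \subseteq \sann_R\Omega_\Lambda^i M$-type containments behave well under the change of module category (i.e. that syzygies taken in $\module\Lambda$ interact correctly with stable annihilators computed over $R$), and in particular verifying that $\Omega_\Lambda M$ really is centrally Cohen-Macaulay and that $\sann_R\Lambda \subseteq \sann_R P$ for projective $\Lambda$-modules $P$ in whichever convention the paper intends. Once those bookkeeping points are pinned down, the argument is just the telescoping product of the $i=1$ case with Lemma~\ref{sann-lemma} doing the real work.
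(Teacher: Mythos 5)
Your proposal is correct and follows essentially the same route as the paper: induction on $i$ via the syzygy short exact sequence $0 \to \Omega_\Lambda M \to P \to M \to 0$, the Depth Lemma to see $\Omega_\Lambda M \in \ZCM\Lambda$, and Lemma~\ref{sann-lemma} applied to the underlying sequence of maximal Cohen-Macaulay $R$-modules. You are in fact slightly more careful than the paper in spelling out why $\sann_R \Lambda \subseteq \sann_R P$ for a (not necessarily free) projective $\Lambda$-module $P$ via the direct-summand behavior of stable annihilators.
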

\begin{proof}
Proof is by induction on $i$. Let $0 \to \Omega_\Lambda M \to F \to M \to 0$ be a short exact sequence defining $\Omega_\Lambda M$ with $F$ a projective $\Lambda$-module. Note that by the Depth Lemma, we have $\Omega_\Lambda M$ is also a centrally Cohen-Macaulay module. Then by Lemma \ref{sann-lemma}, we see that
\begin{align*}
    \sann_R \Lambda \; . \; \sann_R\Omega_\Lambda M \subseteq \sann_RM
\end{align*}
which proves the case for $i = 0$. Now, the proof is complete by observing the same for the short exact sequence
\begin{align*}
    0 \to \Omega^{i+1}_\Lambda M \to G \to \Omega^i_\Lambda M \to 0
\end{align*}
with $G$ a projective $\Lambda$-module.
\end{proof}

\begin{corollary}
Let $\Lambda$ be an $n$-canonical $R$-order of finite global dimension. Then, for any centrally Cohen-Macaulay $\Lambda$-module $X$, we have 
\begin{align*}
    \MCM(\Lambda) \cap \cl_{n+1}(X) \neq \emptyset.
\end{align*}
In particular, if $\Lambda$ has finite global dimension, then we have $\Lambda \in \cl_n(X)$.
\end{corollary}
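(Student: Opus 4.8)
The plan is to exhibit an explicit object of $\MCM(\Lambda)$ that lies in $\cl_{n+1}(X)$, and then to sharpen the choice of witness under the finite global dimension hypothesis. (If $n=0$ then $\Lambda$ is Gorenstein and $\MCM(\Lambda)=\ZCM(\Lambda)\ni X$, so the statement is trivial; assume $n\geq 1$.) Since $\Lambda$ is $n$-canonical, $\MCM(\Lambda)=\Omega_\Lambda^n\ZCM(\Lambda)$, so the hypothesis $X\in\ZCM(\Lambda)$ gives $\Omega_\Lambda^n X\in\MCM(\Lambda)$; as $\Lambda$ is maximal Cohen--Macaulay over itself, the module $Y:=\Omega_\Lambda^n X\oplus\Lambda$ then belongs to $\MCM(\Lambda)$. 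This $Y$ is my candidate witness (note every object of $\MCM(\Lambda)$ is centrally Cohen--Macaulay, hence an object of $\sMCM(R)$, so $\sann_R Y$ makes sense).

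To see $Y\in\cl_{n+1}(X)$, I would apply the preceding proposition with $M=X$ and $i=n$, which yields $(\sann_R\Lambda)^n\cdot\sann_R(\Omega_\Lambda^n X)\subseteq\sann_R X$. The reason for adjoining the summand $\Lambda$ is that, by the direct-sum identity $\sann_R(A\oplus B)=\sann_R A\cap\sann_R B$ (the first assertion of Lemma \ref{basic-prop-lemma}, which the excerpt notes stays valid with $\cl_n$ in place of $\cl$), one has $\sann_R Y\subseteq\sann_R\Lambda$ and $\sann_R Y\subseteq\sann_R(\Omega_\Lambda^n X)$ simultaneously. Multiplying these, $(\sann_R Y)^{n+1}=(\sann_R Y)^n\cdot\sann_R Y\subseteq(\sann_R\Lambda)^n\cdot\sann_R(\Omega_\Lambda^n X)\subseteq\sann_R X$, which is exactly the assertion $Y\in\cl_{n+1}(X)$; hence $\MCM(\Lambda)\cap\cl_{n+1}(X)\neq\emptyset$.

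For the final clause, assume in addition that $\gldim\Lambda<\infty$. Then every maximal Cohen--Macaulay $\Lambda$-module is projective, so $\Omega_\Lambda^n X$ is a direct summand of a free $\Lambda$-module $\Lambda^k$; using the direct-sum identity once more, $\sann_R\Lambda=\sann_R(\Lambda^k)\subseteq\sann_R(\Omega_\Lambda^n X)$. Consequently $\sann_R Y=\sann_R(\Omega_\Lambda^n X)\cap\sann_R\Lambda=\sann_R\Lambda$, so the auxiliary summand becomes unnecessary and $\Lambda$ itself serves as the witness: substituting $\sann_R\Lambda\subseteq\sann_R(\Omega_\Lambda^n X)$ into the proposition's inclusion gives $(\sann_R\Lambda)^{n+1}\subseteq\sann_R X$, i.e.\ $\Lambda\in\cl_{n+1}(X)$, and a fortiori $\Lambda$ lies in the closed set of $\MM(n+1,R)$ generated by $X$.

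I expect the main obstacle to be the exponent bookkeeping. The argument above naturally lands $Y$ (respectively $\Lambda$) in $\cl_{n+1}(X)$; extracting the sharper $\cl_n(X)$ from the finite global dimension hypothesis appears to need an extra ingredient beyond the projectivity of $\Omega_\Lambda^n X$ --- presumably a refined version of the preceding proposition exploiting that the last syzygy in the resolution of $X$ is already projective, or a more direct use of the canonical module $\omega=D\Lambda$. Everything else needed --- that syzygies of centrally Cohen--Macaulay modules are again centrally Cohen--Macaulay, that $\Omega_\Lambda^n X\in\MCM(\Lambda)$ for an $n$-canonical order, and the behaviour of stable annihilators under direct sums --- is already recorded in the excerpt.
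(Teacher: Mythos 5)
Your argument for the first assertion is exactly the paper's: the witness is $\Lambda\oplus\Omega_\Lambda^n X$, and the containment $(\sann_R(\Lambda\oplus\Omega_\Lambda^n X))^{n+1}\subseteq(\sann_R\Lambda)^n\cdot\sann_R(\Omega_\Lambda^n X)\subseteq\sann_R X$ is obtained from the preceding proposition combined with $\sann_R(A\oplus B)=\sann_R A\cap\sann_R B$ (your version in fact corrects a small slip in the paper's displayed chain, where $(\sann_R\Lambda)^n$ is written as $\sann_R\Lambda$). Your reservation about the ``in particular'' clause is well founded rather than a gap on your side: the paper's entire justification there is the one sentence that finite global dimension forces $\Omega_\Lambda^n X$ to be projective, and, as you compute, this only gives $\sann_R\Lambda\subseteq\sann_R(\Omega_\Lambda^n X)$ and hence $\Lambda\in\cl_{n+1}(X)$; the exponent $n$ claimed in the statement is not obtained by the paper's proof either and appears to be an off-by-one slip, so there is no hidden extra ingredient for you to recover.
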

\begin{proof}
The first part of the proof follows from the preceding lemma. Indeed, if $X$ is a centrally Cohen-Macaulay $\Lambda$-module, we have that $\Omega_\Lambda^n X$ is a maximal Cohen-Macaulay $\Lambda$-module. By considering $\Lambda \oplus \Omega^n_\Lambda X$, we get
\begin{align*}
    \sann_R(\Lambda \oplus \Omega^n_\Lambda X)^{n+1} = (\sann_R \Lambda \cap \sann_R \Omega^n_\Lambda X)^{n+1} \subseteq \sann_R \Lambda \; .\; \sann_R\Omega_\Lambda^n X \subseteq \sann_R X.
\end{align*}
If $\Lambda$ has finite global dimension, then every maximal Cohen-Macaulay $\Lambda$-module is projective. Thus, the second part follows.
\end{proof}

\begin{proposition}
Let $\Lambda$ be an $R$-order and assume that the structure map $R \to \Lambda$ is a split monomorphism. Then, for any $M \in \MCM(R)$, we have
\begin{align*}
    \ZCM(\Lambda) \cap \cl_{d+1}(M) \neq \emptyset.
\end{align*}
\end{proposition}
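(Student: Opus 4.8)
The plan is to reduce to the two-dimensional-type argument used in Proposition~\ref{order-proposition} combined with the new inequality for $\cl_n$. First I would recall that since $R$ is a $d$-dimensional Gorenstein local ring and $\Lambda$ is an $R$-order, the canonical module $\omega = D\Lambda$ is a centrally Cohen-Macaulay $\Lambda$-module, and $\Lambda$ has some finite canonical dimension $n$ as an order; but more importantly I want to land in $\ZCM(\Lambda)$, not $\MCM(\Lambda)$, so I will work directly with the tensor construction. Given $M \in \MCM(R)$, form $M \otimes_R \Lambda$. Because the structure map $R \to \Lambda$ is a split monomorphism, $R$ is an $R$-module direct summand of $\Lambda$, hence $M$ is an $R$-module direct summand of $M \otimes_R \Lambda$. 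The issue, exactly as in the two-dimensional case, is that $M \otimes_R \Lambda$ need not be maximal Cohen-Macaulay over $R$; I would pass to a centrally Cohen-Macaulay replacement.

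The key step is to produce from $M \otimes_R \Lambda$ an object $N \in \ZCM(\Lambda)$ that still has $M$ (or something stably equivalent to $M$) as a direct summand up to a controlled power of stable annihilators, so that $N \in \cl_{d+1}(M)$. In dimension two one used the reflexive hull $D^2(M \otimes_R \Lambda)$, which is automatically maximal Cohen-Macaulay because $DZ$ is MCM for every module $Z$ over a two-dimensional Gorenstein ring; in dimension $d$ this is no longer true, so instead I would take a $d$-th syzygy: let $P_\bullet \to M \otimes_R \Lambda$ be a projective $\Lambda$-resolution and set $N = \Omega_\Lambda^d(M \otimes_R \Lambda)$. By the Depth Lemma applied repeatedly over $R$, each syzygy gains depth, so $N$ is maximal Cohen-Macaulay as an $R$-module, i.e. $N \in \ZCM(\Lambda)$. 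Now I would chase the relation between $\sann_R N$ and $\sann_R (M \otimes_R \Lambda)$ using Lemma~\ref{sann-lemma} and the projective $\Lambda$-modules $G$ appearing in the syzygy sequences $0 \to \Omega_\Lambda^{i+1}(M\otimes_R\Lambda) \to G \to \Omega_\Lambda^i(M\otimes_R\Lambda)\to 0$: exactly as in the Proposition just proved (the one giving $(\sann_R\Lambda)^i \cdot \sann_R \Omega^i_\Lambda M \subseteq \sann_R M$), one gets $(\sann_R \Lambda)^d \cdot \sann_R N \subseteq \sann_R(M \otimes_R \Lambda)$. Since $M$ is an $R$-direct summand of $M \otimes_R \Lambda$, Lemma~\ref{basic-prop-lemma}(1) gives $\sann_R(M\otimes_R\Lambda) \subseteq \sann_R M$, hence $(\sann_R\Lambda)^d \cdot \sann_R N \subseteq \sann_R M$. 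Considering $\Lambda \oplus N \in \ZCM(\Lambda)$ and using $\sann_R(\Lambda \oplus N) = \sann_R \Lambda \cap \sann_R N$, we obtain
\begin{align*}
(\sann_R(\Lambda \oplus N))^{d+1} \subseteq (\sann_R \Lambda)^d \cdot \sann_R N \subseteq \sann_R M,
\end{align*}
so $\Lambda \oplus N \in \ZCM(\Lambda) \cap \cl_{d+1}(M)$, which is the claim.

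The main obstacle I anticipate is verifying that $N = \Omega_\Lambda^d(M \otimes_R \Lambda)$ is genuinely centrally Cohen-Macaulay: one must check that taking $d$ $\Lambda$-syzygies of an arbitrary finitely generated $\Lambda$-module produces something of depth $\geq d$ over $R$, which requires that the projective $\Lambda$-modules in the resolution are themselves maximal Cohen-Macaulay over $R$ (true, since $\Lambda$ is an $R$-order) and a careful Depth Lemma induction over $R$ rather than over $\Lambda$. A secondary point to be careful about is that the inequality $(\sann_R\Lambda)^d \cdot \sann_R N \subseteq \sann_R(M\otimes_R\Lambda)$ should be extracted from the already-proved Proposition by substituting $M \otimes_R \Lambda$ in place of its $X$; one should confirm that Proposition's hypothesis (that the object lies in $\ZCM\Lambda$) is only needed to guarantee the syzygies are centrally Cohen-Macaulay so that Lemma~\ref{sann-lemma} applies, and that $M \otimes_R \Lambda$, while possibly not in $\ZCM\Lambda$ itself, still has centrally Cohen-Macaulay syzygies from the $d$-th onward — so the induction should be started at the $d$-th syzygy, or the statement of that Proposition should be invoked with a mild adjustment to its indexing.
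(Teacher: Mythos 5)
Your proposal is correct and follows essentially the same route as the paper: both pass to $M\otimes_R\Lambda$, take $d$ syzygies over $\Lambda$ (working with stabilizations in $\sMCM(R)$ so that Lemma \ref{sann-lemma} applies at each non-centrally-Cohen-Macaulay stage), and exhibit $\Lambda\oplus\Omega_\Lambda^d(M\otimes_R\Lambda)$ as the witness via $(\sann_R\Lambda\cap\sann_R N)^{d+1}\subseteq(\sann_R\Lambda)^d\cdot\sann_R N\subseteq\sann_R M$. The caveats you flag (interpreting $\sann_R(M\otimes_R\Lambda)$ through its MCM approximation, and the depth bookkeeping for the syzygies) are exactly the points the paper handles the same way.
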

\begin{proof}
Let $M$ be a maximal Cohen-Macaulay $R$-module and consider the $\Lambda$-module $M \otimes_R \Lambda$. Now, we proceed by induction on codepth of $M \otimes_R \Lambda$. If the codepth of $M \otimes_R \Lambda$ is zero, then it is centrally Cohen-Macaulay and we are done. If not, we consider the short exact sequence
\begin{align*}
    0 \to \Omega_\Lambda(M \otimes_R \Lambda) \to F \to M \otimes_R \Lambda \to 0
\end{align*}
with $F$ a free module. While $M \otimes_R \Lambda$ is not necessarily a centrally Cohen-Macaulay module, we can consider the maximal Cohen-Macaulay approximation of it inside $\sMCM(R)$ and we have a triangle
\begin{align*}
    [\Omega_\Lambda(M \otimes_R \Lambda)]^{st} \to F \to (M \otimes_R \Lambda)^{st}
\end{align*}
which tells us that 
\begin{align*}
    \sann_R [\Omega_\Lambda(M \otimes_R \Lambda)]^{st} . \sann_R \Lambda \subseteq  \sann_R (M \otimes_R \Lambda)^{st}.
\end{align*}
Note that by the Depth Lemma, taking the syzygy as a $\Lambda$-module reduces codepth by $1$. Thus, we are done by induction after taking $d$ many syzygies. In particular, the $\Lambda$-module $\Lambda \oplus \Omega^d_\Lambda(M\otimes_R \Lambda)$ is a module in the intersection which we prove to be nonempty.
\end{proof}

By similar arguments, we also have the following theorem. We note that if $\Lambda$ is an $n$-canonical $R$-order, then the injective dimension of $\Lambda$ is $d + n + 1$ as a module over itself (right or left). 

\begin{theorem}
Let $\Lambda$ be an $n$-canonical $R$-order and assume that the structure map $R \to \Lambda$ is a split monomorphism. Then, for any $M \in \MCM(R)$, we have
\begin{align*}
    \MCM(\Lambda) \cap \cl_{d+n+1}(M) \neq \emptyset.
\end{align*}
In particular, if $\Lambda$ is a Gorenstein order, we have $\MCM(\Lambda) \cap \cl_{d+1}(M) \neq \emptyset$ and if it is a nonsingular order we have $\Lambda \in \cl_{d+1}(M)$.
\end{theorem}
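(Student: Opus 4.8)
The plan is to imitate the proof of the preceding proposition (the $d{+}1$ case for a split $R$-order), but now carrying an extra $n$ factors of $\sann_R\Lambda$ coming from the fact that $\Lambda$ is $n$-canonical rather than Gorenstein. Recall that when $\Lambda$ is $n$-canonical of any global dimension we have $\MCM(\Lambda)=\Omega_\Lambda^n\ZCM(\Lambda)$, so the natural candidate object to place in the intersection will be $\Lambda\oplus\Omega_\Lambda^{\,n}\!\left[\Omega_\Lambda^{\,d}(M\otimes_R\Lambda)\right]^{st}$, or more precisely the appropriate stabilization; the point is that it is a \emph{maximal} Cohen-Macaulay $\Lambda$-module and its $R$-stable annihilator, raised to the power $d+n+1$, is contained in $\sann_R M$.

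First I would run the induction from the previous proposition verbatim to reach a centrally Cohen-Macaulay $\Lambda$-module $Y:=\left[\Omega_\Lambda^{\,d}(M\otimes_R\Lambda)\right]^{st}$ with
\begin{align*}
(\sann_R \Lambda)^{d}\;.\;\sann_R Y \subseteq \sann_R (M\otimes_R\Lambda)^{st} \subseteq \sann_R M,
\end{align*}
the last inclusion because $M$ is an $R$-direct summand of $M\otimes_R\Lambda$ (split structure map) and hence of its stabilization, so $\cl(M\otimes_R\Lambda)^{st}\subseteq\cl(M)$ by Lemma~\ref{basic-prop-lemma}. Next I would take $n$ further syzygies over $\Lambda$: since $Y\in\ZCM(\Lambda)$ and $\Lambda$ is $n$-canonical, $\Omega_\Lambda^{\,n}Y\in\MCM(\Lambda)$, and the Proposition on $(\sann_R\Lambda)^i\,.\,\sann_R\Omega_\Lambda^i M$ (applied with $M=Y$, $i=n$) gives
\begin{align*}
(\sann_R \Lambda)^{n}\;.\;\sann_R \Omega_\Lambda^{\,n} Y \subseteq \sann_R Y.
\end{align*}
Combining the two displays yields $(\sann_R\Lambda)^{d+n}\,.\,\sann_R\Omega_\Lambda^{\,n}Y\subseteq\sann_R M$. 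Finally, set $X:=\Lambda\oplus\Omega_\Lambda^{\,n}Y\in\MCM(\Lambda)$ and compute, exactly as in the $n$-canonical corollary above,
\begin{align*}
(\sann_R X)^{d+n+1}=(\sann_R\Lambda\cap\sann_R\Omega_\Lambda^{\,n}Y)^{d+n+1}\subseteq(\sann_R\Lambda)^{d+n}\;.\;\sann_R\Omega_\Lambda^{\,n}Y\subseteq\sann_R M,
\end{align*}
so $X\in\MCM(\Lambda)\cap\cl_{d+n+1}(M)$, which is therefore nonempty.

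For the two special cases: if $\Lambda$ is a Gorenstein order then $n=0$, $\MCM(\Lambda)=\ZCM(\Lambda)$, and the bound reads $\cl_{d+1}(M)$, recovering the previous proposition; if moreover $\Lambda$ is nonsingular then every maximal Cohen-Macaulay $\Lambda$-module is projective, so $\Omega_\Lambda^{\,n}Y$ is projective and drops out, leaving $\sann_R\Lambda\supseteq(\sann_R\Lambda)^{d+1}\subseteq\sann_R M$ after absorbing $\sann_R\Omega^n_\Lambda Y=R$, i.e.\ $\Lambda\in\cl_{d+1}(M)$ directly.

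The step I expect to be the main obstacle is keeping the bookkeeping honest when $M\otimes_R\Lambda$ is not centrally Cohen-Macaulay: one must pass to the $\sMCM(R)$-stabilization $(-)^{st}$ and argue, via the triangle $[\Omega_\Lambda(M\otimes_R\Lambda)]^{st}\to F\to(M\otimes_R\Lambda)^{st}$ together with Lemma~\ref{sann-lemma} in its triangulated incarnation, that each $\Lambda$-syzygy step genuinely costs only one factor of $\sann_R\Lambda$ and strictly decreases the $R$-codepth (Depth Lemma), so that $d$ steps suffice; simultaneously one must check that the subsequent $n$ $\Lambda$-syzygies of the now-centrally-Cohen-Macaulay $Y$ behave as plain syzygies (no further stabilization needed) so that the cited Proposition applies on the nose. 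None of this is deep, but the interplay of "syzygy over $\Lambda$" versus "stabilize over $R$" is where a careless argument would lose or gain a power.
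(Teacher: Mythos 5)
Your proposal is correct and follows exactly the argument the paper intends (the paper only says ``by similar arguments,'' deferring to the preceding proposition and the syzygy-annihilator proposition): $d$ $\Lambda$-syzygies to reach $\ZCM(\Lambda)$, $n$ more to reach $\MCM(\Lambda)$ via $\MCM(\Lambda)=\Omega_\Lambda^n\ZCM(\Lambda)$, each costing one factor of $\sann_R\Lambda$ by Lemma~\ref{sann-lemma}, and a final factor from replacing the intersection $\sann_R\Lambda\cap\sann_R\Omega_\Lambda^{d+n}(M\otimes_R\Lambda)$ by a product when passing to $\Lambda\oplus\Omega_\Lambda^{d+n}(M\otimes_R\Lambda)$. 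Your bookkeeping of the stabilization versus honest $\Lambda$-syzygies, and your treatment of the Gorenstein and nonsingular special cases, are also as intended.
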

We end this section by stating these results in the language of our topology.
\begin{corollary}\label{example3}
If $R$ admits an $n$-canonical $R$-order of finite global dimension, then for any $m \geq \gldim \Lambda + 1$, the topological space $\MM(m,R)$ is compact.
\end{corollary}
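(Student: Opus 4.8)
The plan is to derive this directly from the preceding theorem together with the compactness criterion for $\MM(m,R)$. Write $d = \dim R$ and let $\Lambda$ be an $n$-canonical $R$-order of finite global dimension, taken so that the structure map $R \to \Lambda$ is a split monomorphism, as in the hypotheses of the preceding theorem. First I would record the two structural facts already established in this section: since $\Lambda$ has finite global dimension, $\gldim \Lambda = d + n$ and every maximal Cohen-Macaulay $\Lambda$-module is projective, so that $\MCM(\Lambda) = \add \Lambda$; moreover $\Lambda$ is maximal Cohen-Macaulay as an $R$-module, being an $R$-order, so it is a legitimate candidate for the compactness criterion.

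Next I would apply the preceding theorem, which yields $\MCM(\Lambda) \cap \cl_{d+n+1}(M) \neq \emptyset$ for every $M \in \MCM(R)$. Fix such an $M$ and choose $P$ in this intersection. Since $\MCM(\Lambda) = \add \Lambda$, we have $P \in \add \Lambda$, hence $\sann_R \Lambda \subseteq \sann_R P$ by Lemma \ref{basic-prop-lemma}(2), while $(\sann_R P)^{d+n+1} \subseteq \sann_R M$ by the definition of $\cl_{d+n+1}$. Combining the two inclusions gives $(\sann_R \Lambda)^{d+n+1} \subseteq (\sann_R P)^{d+n+1} \subseteq \sann_R M$.

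Since $M \in \MCM(R)$ was arbitrary and $\ca(R) = \bigcap_{M \in \MCM(R)} \sann_R M$, this shows $(\sann_R \Lambda)^{d+n+1} \subseteq \ca(R)$, i.e. $(\sann_R \Lambda)^{\gldim \Lambda + 1} \subseteq \ca(R)$. For any $m \geq \gldim \Lambda + 1$, the nested powers of the ideal $\sann_R \Lambda$ give $(\sann_R \Lambda)^m \subseteq (\sann_R \Lambda)^{\gldim \Lambda + 1} \subseteq \ca(R)$, so applying the compactness criterion for $\MM(m,R)$ with the maximal Cohen-Macaulay $R$-module $X = \Lambda$ shows that $\MM(m,R)$ is compact.

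I expect the only step requiring genuine care to be the implication $\MCM(\Lambda) \cap \cl_{d+n+1}(M) \neq \emptyset \Rightarrow (\sann_R \Lambda)^{d+n+1} \subseteq \sann_R M$: one must use that finite global dimension collapses $\MCM(\Lambda)$ onto $\add \Lambda$, so that the witnessing module has stable annihilator containing $\sann_R \Lambda$, and one must keep the exponent $d+n+1$ matched with $\gldim \Lambda + 1$. Everything else is formal bookkeeping with inclusions of ideals and their powers.
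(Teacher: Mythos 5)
Your proof is correct and follows the route the paper intends: the corollary is stated without an explicit proof as a direct consequence of the preceding theorem, and your derivation --- using that finite global dimension forces $\MCM(\Lambda)=\add\Lambda$, so a witness $P\in\MCM(\Lambda)\cap\cl_{d+n+1}(M)$ satisfies $\sann_R\Lambda\subseteq\sann_R P$ and hence $(\sann_R\Lambda)^{\gldim\Lambda+1}\subseteq\ca(R)$, which feeds into the compactness criterion for $\MM(m,R)$ with $X=\Lambda$ --- is exactly the missing bookkeeping. You were also right to import the split-monomorphism hypothesis on the structure map, which the corollary's statement silently inherits from the theorem it restates.
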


\section{Finite Representation Type}

\begin{chunk}
\textbf{Double branched covers.} Let $k$ be an algebraically closed field of characteristic zero, $(S,\mathfrak{n})$ be a ring of power series in $d+1$ variables, $f \in \mathfrak{n}$ is a power series in the maximal ideal and $R = S/(f)$ be the hypersurface ring defined by $f$. Then, the double branched cover of  $R$ is the ring $R^\sharp = S^\sharp / (f+z^2)$ where $S^\sharp = S[z]$.

\begin{lemma}
The function $\pi: \MM(R^\sharp) \to \MM(R)$ defined by $M \mapsto M/zM$ is continuous.
\end{lemma}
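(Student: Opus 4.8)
## Proof Proposal

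The plan is to invoke the machinery of Section 3 --- specifically Proposition \ref{continuity-prop-3} together with Lemma \ref{continuity-lemma-1} --- exactly as was done for Theorem \ref{continuity-theorem}, but now for the ring map $f \colon R^\sharp \to R = R^\sharp/(z)$ given by killing $z$. The first thing I would check is that $z$ is a cohomology annihilator for $R^\sharp$ which is a nonzerodivisor. That $z$ is a nonzerodivisor on $R^\sharp = S[z]/(f+z^2)$ is clear since $f + z^2$ is monic in $z$ over the domain (or reduced ring) $S$, so $R^\sharp$ is free over $S$ with basis $1, z$. That $z \in \ca(R^\sharp)$ is the key input and is essentially Kn\"orrer periodicity / the Bergh--Jorgensen--type vanishing: for a double branched cover $R^\sharp = S^\sharp/(f+z^2)$, one has $z \cdot \Ext^i_{R^\sharp}(M,N) = 0$ for $i \gg 0$ for all finitely generated modules, because $\sMCM(R^\sharp)$ is $2$-periodic (Eisenbud's matrix factorizations), so $\sann_{R^\sharp}(M)$ always contains the element cutting out the branch. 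I would cite the relevant statement from the hypersurface / matrix-factorization literature (Eisenbud, or \cite{dugasleuschke} in the form already used in the excerpt) to get $z \in \ca(R^\sharp)$.

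Once $z \in \ca(R^\sharp)$ is a nonzerodivisor, the situation is literally an instance of what precedes Theorem \ref{continuity-theorem}: for any maximal Cohen-Macaulay $R^\sharp$-module $M$, the quotient $\pi M = M/zM$ is a maximal Cohen-Macaulay $R$-module (depth drops by exactly one when we kill the nonzerodivisor $z$, and $\dim R = \dim R^\sharp - 1$), so $\Phi_\pi(M) = M \otimes_{R^\sharp} R = M/zM = \pi M$ with no MCM-approximation needed. By Proposition \ref{continuity-prop-3} it then suffices to prove the equality $\pi(\sann_{R^\sharp} M) = \sann_R \pi M$; by Lemma \ref{contunuity-lemma-2} the inclusion $\pi(\sann_{R^\sharp} M) \subseteq \sann_R \pi M$ is automatic, so the real content is the reverse inclusion $\sann_R \pi M \subseteq \pi(\sann_{R^\sharp} M)$.

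For the reverse inclusion I would run the same Ext-computation as in the last unnumbered Lemma of Section 3, with $R^\sharp$ in place of $R$ and $R$ in place of $R/xR$. That is: given $q \in \sann_R(\pi M)$, test it against $Z = \Omega_R\, \pi\, \Omega_{R^\sharp}^{-1} M$, which is MCM over $R$; then use the chain of isomorphisms $\sHom_R(Z, \pi M) \cong \Ext^1_R(\pi\Omega_{R^\sharp}^{-1}M, \pi M) \cong \Ext^2_{R^\sharp}(\pi \Omega_{R^\sharp}^{-1}M, M) \cong \Ext^1_{R^\sharp}(\Omega_{R^\sharp}\pi\Omega_{R^\sharp}^{-1}M, M)$, and finally the double-branched-cover analogue of the identity $\Omega_{R^\sharp}(N/zN) \cong N \oplus \Omega_{R^\sharp}(N)$ (this is again \cite[Lemma 2.1, Lemma 2.2]{dugasleuschke}, valid since $z \in \ca(R^\sharp)$ is a nonzerodivisor), which turns the last Ext into $\sHom_{R^\sharp}(\Omega_{R^\sharp}^{-1}M, M) \oplus \sEnd_{R^\sharp}(M)$. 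As $q$ annihilates this as an $R^\sharp$-module, it annihilates $\sEnd_{R^\sharp}(M)$, i.e.\ $q \in \sann_{R^\sharp} M$, and since $q$ already lives in $R = R^\sharp/(z)$ this says $q \in \pi(\sann_{R^\sharp} M)$. Then Lemma \ref{continuity-lemma-1} gives continuity of $\pi$.

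The main obstacle I anticipate is purely a bookkeeping/hypothesis issue rather than a conceptual one: verifying cleanly that $z \in \ca(R^\sharp)$ under the stated hypotheses (so that the \cite{dugasleuschke} lemma and the depth statement both apply), and making sure the base ring here is Gorenstein local so that all the Section 3 apparatus --- stable $\Hom$, Auslander--Buchweitz, the identifications $\sHom \cong \Ext^1$ --- is licensed. Since $R = S/(f)$ and $R^\sharp = S^\sharp/(f+z^2)$ are hypersurfaces, both are Gorenstein, so this is fine; one only needs $f \in \mathfrak{n}$ nonzero (equivalently $R$ is not regular, or at worst one treats the regular case trivially since then $\MM$ is a point). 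Everything else is a transcription of the proof of Theorem \ref{continuity-theorem}.
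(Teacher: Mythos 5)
Your overall strategy is the paper's: verify that $z$ is a nonzerodivisor in $\ca(R^\sharp)$ and then invoke Theorem \ref{continuity-theorem}. (Your subsequent re-derivation of the proof of Theorem \ref{continuity-theorem} for the map $R^\sharp \to R^\sharp/(z)$ is correct but redundant --- once $z \in \ca(R^\sharp)$ is a nonzerodivisor, the theorem applies verbatim, which is all the paper does.) The one point to flag is your justification of the key input $z \in \ca(R^\sharp)$: the argument ``$\sMCM(R^\sharp)$ is $2$-periodic, so $\sann_{R^\sharp}(M)$ always contains the element cutting out the branch'' is a non sequitur --- every hypersurface has a $2$-periodic stable category, and this by itself singles out no particular ring element as a uniform annihilator. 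The correct reason, and the one the paper uses, is that for a hypersurface $S^\sharp/(g)$ the Jacobian ideal is contained in the cohomology annihilator ideal (obtained, e.g., by differentiating the matrix factorization identity $\varphi\psi = g\cdot I$), and $\partial_z(f+z^2) = 2z$ with $2$ a unit in characteristic zero. You do hedge by saying you would cite the literature, and such a citation exists, so this is a reparable gap rather than a fatal one; but note that the \cite{dugasleuschke} lemma you point to presupposes that the element is already a cohomology annihilator and therefore cannot supply this step.
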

\begin{proof}
Given a hypersurface ring defined by a power series $g$, the Jacobian ideal is contained in the cohomology annihilator ideal. In particular, all partial derivatives of $g$ are cohomology annihilators. This means that in our setting, $z \in \ca(R^\sharp)$ since the partial derivative of $f+z^2$ with respect to $z$ is $2z$. Now, the proof follows from Theorem \ref{continuity-theorem}.
\end{proof}

This lemma followed from Theorem \ref{continuity-theorem} which relied on the fact that for any maximal Cohen-Macaulay $R^\sharp$-module $M$, we have
\begin{align}\label{first-iso}
\Omega_{R^\sharp} (\pi M) \cong M \oplus \Omega_{R^\sharp} M.
\end{align}
As we have seen, this is a general phenomenon related to cohomology annihilators. Start with a module, annihilate the cohomology annihilator and then take the syzygy of the quotient module: you will not get the original module back but the direct sum of the original module and its syzygy. In the case of a double branched cover, a similar phenomenon occurs when one starts with a module over the quotient ring, as well. More precisely, keeping the notation above, for any $R$-module $N$, we have
\begin{align}\label{second-iso}
    \pi \Omega_{R^\sharp} N \cong N \oplus \Omega_R N
\end{align}
which was proved by Horst Kn{\"o}rrer in \cite{Knorrer}.
\begin{lemma}
The function $\Omega_{R^\sharp}: \MM(R) \to \MM(R^\sharp)$ defined by $N \mapsto \Omega_{R^\sharp}N$ is continuous.
\end{lemma}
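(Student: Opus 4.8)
The plan is to show continuity via Lemma \ref{continuity-lemma-1}, i.e. to prove that for any two maximal Cohen-Macaulay $R$-modules $L, N$ we have the implication $\sann_R L \subseteq \sann_R N \implies \sann_{R^\sharp} \Omega_{R^\sharp} L \subseteq \sann_{R^\sharp} \Omega_{R^\sharp} N$. Since the functor $\Omega_{R^\sharp}$ fixes stable annihilators up to the shift on the target (by Lemma \ref{basic-prop-lemma}(3), $\sann_{R^\sharp} \Omega_{R^\sharp} X = \sann_{R^\sharp} X$ for $X \in \MCM(R^\sharp)$), what we really need is to relate $\sann_{R^\sharp}(\Omega_{R^\sharp} N)$ to $\sann_R N$ for an $R$-module $N$. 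The key input is Knörrer's isomorphism \eqref{second-iso}: $\pi \Omega_{R^\sharp} N \cong N \oplus \Omega_R N$ as $R$-modules.

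First I would observe that $\sann_R(N \oplus \Omega_R N) = \sann_R N \cap \sann_R \Omega_R N = \sann_R N$, using Lemma \ref{basic-prop-lemma}(1) together with the already-recorded equality $\sann_R N = \sann_R \Omega_R N$. So \eqref{second-iso} gives $\sann_R(\pi \Omega_{R^\sharp} N) = \sann_R N$. Next I would invoke Theorem \ref{continuity-theorem} — more precisely the lemma proved in its course — which for the cohomology annihilator nonzerodivisor $z \in \ca(R^\sharp)$ gives $\sann_R(\pi X) = \sann_R(X/zX) = \pi^\sharp(\sann_{R^\sharp} X)$ for every $X \in \MCM(R^\sharp)$, where $\pi^\sharp: R^\sharp \to R^\sharp/zR^\sharp = R$ is the quotient map. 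Applying this with $X = \Omega_{R^\sharp} N$ and combining with the previous paragraph yields $\pi^\sharp(\sann_{R^\sharp} \Omega_{R^\sharp} N) = \sann_R N$ for every $N \in \MCM(R)$.

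Now the implication follows almost formally. Suppose $\sann_R L \subseteq \sann_R N$. Then $\pi^\sharp(\sann_{R^\sharp} \Omega_{R^\sharp} L) = \sann_R L \subseteq \sann_R N = \pi^\sharp(\sann_{R^\sharp} \Omega_{R^\sharp} N)$, and since both stable annihilators contain $z R^\sharp$ (as $z$ is a cohomology annihilator for $R^\sharp$), pulling back along the surjection $\pi^\sharp$ preserves the inclusion: $\sann_{R^\sharp} \Omega_{R^\sharp} L \subseteq \sann_{R^\sharp} \Omega_{R^\sharp} N$. This is exactly the hypothesis of Lemma \ref{continuity-lemma-1}, so $\Omega_{R^\sharp}$ is continuous.

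The main obstacle, and the step warranting care, is the bookkeeping around the ring $R^\sharp/zR^\sharp \cong R$ and the fact that $\sann_{R^\sharp} X \supseteq z R^\sharp$ for $X \in \MCM(R^\sharp)$, which is what makes "applying $\pi^\sharp$" a lossless operation on these ideals; this is implicit in the proof of Theorem \ref{continuity-theorem} but should be spelled out. One should also double-check that Knörrer's \eqref{second-iso} is being applied to an honest maximal Cohen-Macaulay $R$-module and that $\Omega_{R^\sharp} N$ is genuinely maximal Cohen-Macaulay over $R^\sharp$ (it is, being a syzygy over a hypersurface), so that the stable-category identifications are legitimate. Beyond that the argument is a short chain of already-established facts.
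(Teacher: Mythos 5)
Your proof is correct and follows essentially the same route as the paper: reduce to the order-preserving implication, use Kn\"orrer's isomorphism $\pi\Omega_{R^\sharp}N \cong N \oplus \Omega_R N$ together with the identity $\pi(\sann_{R^\sharp}X) = \sann_R(\pi X)$ for $z \in \ca(R^\sharp)$, and pull the inclusion back along $\pi$. Your explicit remark that both stable annihilators contain $zR^\sharp$, which makes the pullback lossless, is a point the paper leaves implicit in its final step, so spelling it out is a welcome clarification rather than a deviation.
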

\begin{proof}
Assume that $\sann_R M \subseteq \sann_R N$. We should show that $\sann_{R^\sharp} \Omega_{R^\sharp} M \subseteq \sann_{R^\sharp} \Omega_{R^\sharp} N$. Let $r \in \sann_{R^\sharp} \Omega_{R^\sharp} M$. We know that since $z$ is a cohomology annihilator of $R^\sharp$ we have $\pi(\sann_{R^\sharp} X) = \sann_R \pi X$ for every maximal Cohen-Macaulay $R^\sharp$-module $X$. So, we have $$\pi(r) \in \sann_R(\pi \Omega_{R^\sharp} M) = \sann_R(M \oplus \Omega_RM) = \sann_R M \subseteq \sann_R N.$$ From this, we conclude that $$\pi(r) \in \sann_R N = \sann_R (N \oplus \Omega_R N) = \sann_R(\pi \Omega_R^\sharp N). $$ Thus, $r \in \sann_{R^\sharp} \Omega_{R^\sharp} N$ as required.
\end{proof}
Now we established that the functions $\pi: \MM(R^\sharp) \to \MM(R)$ and $\Omega_{R^\sharp}: \MM(R) \to \MM(R^\sharp)$ are continuous. Moreover, the isomorphisms (\ref{first-iso}) and (\ref{second-iso}) tell us that $\pi$ and $\Omega_{R^\sharp}$ are \textit{almost} inverses of each other. If we instead consider these functions as functions between the corresponding Kolmogorov quotients, they actually become inverses of each other as $\sann M = \sann (M \oplus \Omega M)$. Hence, we conclude the following proposition.

\begin{proposition}\label{double-branched-homeomorphism}
Let $R$ be a hypersurface ring and $R^\sharp$ be its double branched cover. Then, the Kolmogorov quotients of $\MM(R)$ and $\MM(R^\sharp)$ are homeomorphic.
\end{proposition}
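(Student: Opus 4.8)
The plan is to pass to Kolmogorov quotients and show that the two continuous maps already established — namely $\pi \colon \MM(R^\sharp) \to \MM(R)$, $M \mapsto M/zM$, and $\Omega_{R^\sharp} \colon \MM(R) \to \MM(R^\sharp)$ — descend to mutually inverse homeomorphisms. First I would recall that forming the Kolmogorov quotient is functorial: any continuous $f \colon X \to Y$ sends topologically indistinguishable points to topologically indistinguishable points, because $f^{-1}(\overline{\{f(x)\}})$ is a closed set containing $x$, hence containing $\overline{\{x\}}$; so if $x$ and $x'$ are indistinguishable, i.e. $\overline{\{x\}} = \overline{\{x'\}}$, then $x' \in \overline{\{x\}}$ forces $f(x') \in \overline{\{f(x)\}}$, and by symmetry $\overline{\{f(x)\}} = \overline{\{f(x')\}}$. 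Hence $f$ induces a continuous map on Kolmogorov quotients. Applying this to the two lemmas above, I obtain continuous maps
\begin{align*}
    \overline{\pi} \colon \kalex(\sMCM(R^\sharp)) \to \kalex(\sMCM(R)), \qquad \overline{\Omega}_{R^\sharp} \colon \kalex(\sMCM(R)) \to \kalex(\sMCM(R^\sharp)),
\end{align*}
whose points, by the Remark in Section 2, may be regarded as the stable annihilator ideals $\sann_R M$ and $\sann_{R^\sharp} M$ respectively.

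Next I would check that these two maps are inverse to one another. For $N \in \MCM(R)$, Kn{\"o}rrer's isomorphism (\ref{second-iso}) gives $\pi\,\Omega_{R^\sharp} N \cong N \oplus \Omega_R N$. Using Lemma \ref{basic-prop-lemma}, whose proof shows $\sann_R(N \oplus \Omega_R N) = \sann_R N \cap \sann_R \Omega_R N$, together with the equality $\sann_R N = \sann_R \Omega_R N$ (Lemma \ref{basic-prop-lemma}(3), as $\Omega_R$ is an auto-equivalence of $\sMCM(R)$), I conclude $\sann_R(N \oplus \Omega_R N) = \sann_R N$, so $N \oplus \Omega_R N$ and $N$ are topologically indistinguishable. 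Therefore $\overline{\pi} \circ \overline{\Omega}_{R^\sharp}$ is the identity of $\kalex(\sMCM(R))$. Symmetrically, for $M \in \MCM(R^\sharp)$, the isomorphism (\ref{first-iso}) gives $\Omega_{R^\sharp}(\pi M) \cong M \oplus \Omega_{R^\sharp} M$, and the same computation yields $\sann_{R^\sharp}(M \oplus \Omega_{R^\sharp} M) = \sann_{R^\sharp} M$, so $\overline{\Omega}_{R^\sharp} \circ \overline{\pi}$ is the identity of $\kalex(\sMCM(R^\sharp))$. Thus $\overline{\pi}$ and $\overline{\Omega}_{R^\sharp}$ are mutually inverse continuous bijections, i.e. homeomorphisms.

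I do not expect a genuine obstacle here: the two continuity lemmas and the Kn{\"o}rrer-type isomorphisms (\ref{first-iso}) and (\ref{second-iso}) carry all the weight, and the only real subtlety is exactly the one highlighted in the discussion preceding the proposition — the composites $\pi\circ\Omega_{R^\sharp}$ and $\Omega_{R^\sharp}\circ\pi$ are not literally the identity on $\MM$ (they send an object to the direct sum of itself with a syzygy), but they become the identity once topologically indistinguishable points are identified. The remaining points are routine bookkeeping: $\pi$ lands in $\MCM(R)$ because $z \in \ca(R^\sharp)$ is a nonzerodivisor (already invoked for Theorem \ref{continuity-theorem}), and $\Omega_{R^\sharp}$ applied to an $R$-module, viewed over $R^\sharp$ along the surjection $R^\sharp \to R$, is maximal Cohen-Macaulay over $R^\sharp$, so that both maps are defined on the stated domains.
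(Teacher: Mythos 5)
Your proposal is correct and follows exactly the paper's argument: the two continuity lemmas give the maps, the Kn\"orrer-type isomorphisms (\ref{first-iso}) and (\ref{second-iso}) show the composites send $M$ to $M \oplus \Omega M$, and the equality $\sann M = \sann(M \oplus \Omega M)$ makes them mutually inverse on the Kolmogorov quotients. The only difference is that you spell out the functoriality of the Kolmogorov quotient, which the paper leaves implicit.
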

\end{chunk}

\begin{chunk}
\textbf{Simple singularities.} Now, let us focus on some Gorenstein local rings for which the Kolmogorov quotient of the Alexandrov space of maximal Cohen-Macaulay modules is a finite topological space. We are considering \textit{CM-finite} Gorenstein local rings which, by definition, have finitely many indecomposable maximal Cohen-Macaulay modules up to isomorphism. The classification of such rings were given in the 1980s by the work of Buchweitz-Greuel-Schreyer \cite{BGS} and Kn{\"o}rrer: if $R$ is a complete Gorenstein local ring containing the residue field $k$, which is an algebraically closed field of characteristic zero, then $R$ is CM-finite if and only if it is a \textit{simple singularity} meaning that it is of the form $R = k[[x,y,z_2,\ldots, z_d]]/(f) $ where $f$ is one of the following polynomials:
\begin{align*}
    (A_n): &\quad x^2 + y^{n+1} + z_2^2 + \ldots + z_d^2 \quad n \geq 1, \\
    (D_n): &\quad x^2y + y^{n-1} + z_2^2 + \ldots + z_d^2 \quad n \geq 4, \\
    (E_6): &\quad x^3+y^4 + z_2^2 + \ldots + z_d^2\\
    (E_7): &\quad x^3+xy^3+z_2^2 + \ldots + z_d^2\\
    (E_8): &\quad x^3+y^5+z_2^2 + \ldots + z_d^2
\end{align*}
We note that by Proposition \ref{double-branched-homeomorphism} it is enough to study one dimensional simple singularities for our purposes. We also note that by the work of Eisenbud \cite{Eisenbud}, every maximal Cohen-Macaulay module over a hypersurface ring has a 2-periodic projective (or complete) resolution. Therefore, if $R$ is a hypersurface ring and $M$ is a maximal Cohen-Macaulay $R$-module, then we have
\begin{align*}
    \sann_R M = \ann_R \sEnd_R(M) = \ann_R \Ext_R^2(M,M).
\end{align*}
\end{chunk}

\begin{chunk}
\textbf{Type A.} For an $A_n$ singularity, we can further use Proposition \ref{double-branched-homeomorphism} to reduce to the zero dimensional case where $R = k[y]/(y^{n+1})$. A complete list of indecomposable maximal Cohen-Macaulay modules up to isomorphism in this case is $M_0, \ldots, M_n$ where $M_i = k[y]/(y^{i+1})$.

A projective resolution of $M_i$ is
\begin{align*}
        \ldots R \xrightarrow{y^{i+1}} R \xrightarrow{y^{n-i}} R \xrightarrow{y^{i+1}} R \xrightarrow{y^{n-i}} R \xrightarrow{y^{i+1}} R \to  0.
    \end{align*}
    Then, we apply $\Hom_R(-,M_i)$ to this and we get
    \begin{align*}
        0 \to \Hom_R(R,M_i) \xrightarrow{\Hom_R(y^{i+1}, M_i)} \Hom_R(R,M_i) \xrightarrow{\Hom_R(y^{n-i}, M_i)} \to \ldots  
    \end{align*}
    Note that $\Hom_R(R,M_i) \cong M_i$ and the maps are just multiplication by $y^{i+1}$ and $y^{n-i}$ alternatively. Thus, we need to compute the homology of 
    \begin{align*}
        M_i \xrightarrow{y^{i+1}} M_i \xrightarrow{y^{n-i}} M_i.
    \end{align*}
    The image of the first map is zero. So, the homology is just the kernel of the map $M_i \xrightarrow{y^{n-i}} M_i$. So, we have
    \begin{align*}
        \Ext_R^2(M_i,M_i) = \ker( M_i \xrightarrow{y^{n-i}} M_i) = (y^{1+i-n+i}).
    \end{align*}
    From here, we get that $\sann_R(M_i) = (y^{i+1}, y^{n-i}) = (y^{\min{(i+1, n-i)}})$. 
    
    Hence, the topology of the Kolmogorov space of $\MM(R)$ is generated by the closures of the points $M_0, M_1, \ldots, M_t$ (where $t= \lceil n /2\rceil$) whose stable annihilators are $(y), (y^2), \ldots, (y^{n-t})$, respectively. In fact, these are all the closed sets in this space. Hence the topology on $A_n$ is given by the indecomposable $\sann(M_i)$'s which are totally ordered by inclusion as
   \begin{equation*}
       \sann(M_t)\subset \sann(M_{t-1})\subset \ldots \subset \sann(M_1)\subset \sann(M_0).
   \end{equation*}
\end{chunk}

\begin{chunk}
\textbf{Matrix factorizations.} 
Let $S$ be a regular local ring with the maximal ideal $\mathfrak{n}$ and let $f$ be a nonzero element of $\mathfrak{n}^2$. A \emph{matrix factorization} of $f$ is a pair $(\varphi, \psi)$ of homomorphisms $\varphi \colon F \to G,$ $\psi \colon G \to F$ of free $S$-module of the same rank such that $$\varphi\psi = f 1_F \text{ and } \psi\varphi = f 1_G.$$ Equivalently (after choosing bases), $\varphi$ and $\psi$ are square matrices of the same size over $S$, say $n \times n$, such that $$\varphi\psi = \psi\varphi = f I_n.$$

A homomorphism of matrix factorizations is a pair $(\alpha, \beta)$ of $S$-module homomorphisms between $(\varphi, \psi)$ and $(\varphi', \psi')$ rendering the diagram
\begin{align*}
    \xymatrix{
    F \ar[r]^\phi \ar[d]_\beta & G \ar[r]^\psi \ar[d]^\alpha & F \ar[d]^\beta \\
    F' \ar[r]^{\phi '} & G' \ar[r]^{\psi '} & F'
    }
\end{align*}

commutative. We denote the category of matrix factorizations of $f$ over $S$ by $\mathrm{MF}(S,f)$. The functor which takes a matrix factorization $(\phi, \psi)$ to $\coker (\phi)$ induces an equivalence of triangulated categories between the \emph{homotopy} category of matrix factorizations of $f$ over $S$ and the stable category of maximal Cohen-Macaulay modules over $R = S/(f)$ due to a theorem of Eisenbud \cite{Eisenbud} combined with a theorem of Buchweitz \cite{Buchweitz}.

Recall that an element $r \in R$ belongs to the stable annihilator of a maximal Cohen-Macaulay module $M$ if and only if \textit{multiplication by $r$} factors through a free $R$-module. This, by the equivalence above, is equivalent to saying that \textit{multiplication by $r$} on $(\phi, \psi)$ is \textit{nullhomotopic} which, by definition, means that there are maps $t,p$ such that $r I_n = \phi p + t \psi = p \phi + \psi t$. In what follows, we will use this to compute the stable annihilators of maximal Cohen-Macaulay modules over a $D_n$ singularity.
\end{chunk}

\begin{chunk}
\textbf{Type D.} For a $D_n$ type singularity with odd $n$, we have the indecomposable matrix factorizations in the following form \cite{Y}:
\begin{gather*}
    (\alpha, \beta) = (y, x^2+y^{n-2})\\
    \varphi_j= \begin{pmatrix}
    x & y^j \\
    y^{n-j-2} & -xy
    \end{pmatrix} \;,\;
    \psi_j= \begin{pmatrix}
    xy & y^{i+1} \\
    y^{n-j-1} & -xy
    \end{pmatrix} \;\\
    \xi_j= \begin{pmatrix}
    x & y^{i} \\
    y^{n-j-1} & -xy
    \end{pmatrix} \;,\;
    \eta_j= \begin{pmatrix}
    x & y^{i} \\
    y^{n-j-1} & -xy
    \end{pmatrix}
\end{gather*}

for $0 \leq j \leq n-3$. We denote $A = \coker(\alpha, \beta)$,  $M_j = \coker(\varphi_j, \psi_j)$, $X_j= \coker(\xi_j, \eta_j) $. Since $\sann(\coker(\varphi,\psi))=\sann(\coker(\psi,\varphi))$ with the abuse of notation we will also simplify the notation as $\sann(\coker(\varphi,\psi))=\sann(\varphi)$.

In general the annihilators of $D_n$ type have the following structure:
\begin{proposition}[General ideal structure of $D_n$]
Let $n$ be an odd integer. For $R = k[x, y]/f$ where $f = x^2 y + y^{n - 1}$ defines a $D_n$-type singularity, the stable annihilator ideal of the matrix factorizations $(\varphi_j, \psi_j)$ and $(\xi_j, \eta_j)$ for $j = 0, \dots, n - 3$ are given as $$\sann(\varphi_j) = \sann(\varphi_{n-j-2}) = (x^2, xy, y^{j+1}),\, 0 < j < \frac{n-1}{2},$$ and $$\sann(\xi_j) = \sann(\xi_{n-j-1}) = (x, y^j),\, 0 < j \leq \frac{n-1}{2},$$ respectively.
\end{proposition}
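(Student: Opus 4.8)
The plan is to compute both families directly from the matrix‑factorization description recalled above, for which $r\in\sann(\varphi)$ exactly when multiplication by $r$ on $(\varphi,\psi)$ is nullhomotopic, i.e.\ $rI_2=\varphi p+t\psi=p\varphi+\psi t$ for some $2\times 2$ matrices $p,t$ over $S$. The first step is to dispose of the two stated equalities between members of each family for free: inspecting the entries shows that $\varphi_{n-j-2}$ is the transpose of $\varphi_j$ and $\xi_{n-j-1}$ the transpose of $\xi_j$, and passing to the transpose of a matrix factorization realizes, up to syzygies, the duality $\Hom_R(-,R)$, which is an auto-equivalence of $\sMCM(R)$ and hence preserves stable annihilators by Lemma~\ref{basic-prop-lemma}(3) (equivalently, $\sann_R M=\sann_R\Hom_R(M,R)$, as recorded after the matrix-factorization discussion). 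So it suffices to identify $\sann(\varphi_j)$ with $(x^2,xy,y^{j+1})$ for $0<j<(n-1)/2$ and $\sann(\xi_j)$ with $(x,y^j)$ for $0<j\le(n-1)/2$.

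For the inclusion $\sann(\varphi_j)\subseteq(x^2,xy,y^{j+1})$ (and likewise $\sann(\xi_j)\subseteq(x,y^j)$) I would reduce the nullhomotopy equation modulo the candidate ideal $I$. The key point is that the stated range of $j$ is exactly the range in which the ``large'' $y$-exponents occurring in $\varphi_j$ and $\psi_j$ (namely $n-j-2$ and $n-j-1$) are at least $j+1$; consequently, modulo $I$ every entry of $\varphi_j$ and $\psi_j$ lies in $I$ except the entries of $\varphi_j$ in one fixed row. The equation $rI_2=\varphi_j p+t\psi_j$ then forces $r$ to vanish in the other diagonal slot, i.e.\ $r\in I$; the same reduction works verbatim for $\xi_j$ once one checks $n-j-1\ge j$, which holds precisely when $j\le(n-1)/2$. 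One also notes $f=x^2y+y^{n-1}\in I$, so there is no loss in solving the homotopy over $S$ rather than $R$.

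For the reverse inclusion I would write down explicit nullhomotopies generator by generator. Several are one line: for instance $y^jI_2=\xi_j p+t\eta_j$ with $p=t=\left(\begin{smallmatrix}0&0\\1&0\end{smallmatrix}\right)$, and analogous monomial choices place $y^{j+1}$ (and $x^2$) inside $\sann(\varphi_j)$, while the adjugate identity $\varphi\,\mathrm{adj}(\varphi)=\det(\varphi)I_2$ supplies a further generator modulo those already found. The remaining generators — the mixed term $x$ for $\xi_j$, respectively $xy$ for $\varphi_j$ — require solving the homotopy system entry by entry, producing $p,t$ with polynomial entries and then checking the companion equation $p\varphi+\psi t=rI_2$ as well. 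I expect this last step to be the principal obstacle, for two reasons. First, the indecomposable matrix factorizations must be normalized correctly: as printed the list contains misprints (the index $i$ should read $j$ throughout, and $\eta_j$ cannot coincide with $\xi_j$ — it should be the signed adjugate $-\mathrm{adj}(\xi_j)$), and the whole computation is sensitive to getting these right. Second, the homotopies have to be exhibited uniformly in $j$ and in the odd parameter $n$. It would be prudent to cross-check the resulting ideals against the identity $\sann_R M=\ann_R\Ext^2_R(M,M)$ valid over hypersurfaces, and to handle the cases outside these ranges — $j=0$, the self-dual module $X_{(n-1)/2}$, and the rank-one module $A=\coker(\alpha,\beta)$ — by separate, easier, direct computations.
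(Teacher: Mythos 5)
Your overall strategy matches the paper's for the containments $(x^2,xy,y^{j+1})\subseteq\sann(\varphi_j)$ and $(x,y^j)\subseteq\sann(\xi_j)$: both arguments come down to exhibiting explicit nullhomotopies generator by generator, and the one homotopy you write out (for $y^j$ on $(\xi_j,\eta_j)$) is correct; the paper simply carries out the remaining ones for $x^2$, $xy$ and $x$ that you defer. Where you genuinely diverge is in the reverse containments and the symmetries. For the upper bound the paper only shows that the specific elements $x$ and $y^j$ (resp.\ $y^{j-1}$) are not nullhomotopic, by deriving a contradiction from the four entrywise equations; your plan of reducing the entire homotopy identity modulo the candidate ideal $I$ is more systematic and, when it works, proves the sharper statement $\sann\subseteq I$ rather than just excluding two monomials. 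For the symmetries, the paper quotes Yoshino's isomorphisms $\coker(\varphi_j,\psi_j)\cong\coker(\varphi_{n-j-2},\psi_{n-j-2})$ and $X_j\cong Y_{n-j-1}$, whereas you observe that $\varphi_{n-j-2}=\varphi_j^{T}$ and $\xi_{n-j-1}=\xi_j^{T}$ and invoke invariance of $\sann$ under $\Hom_R(-,R)$ and syzygies; both routes are legitimate.

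One caveat about your upper-bound reduction. You are right that the displayed list of matrix factorizations contains misprints, but you missed the relevant one: the $(2,2)$ entry of $\varphi_j$ must be $-x$, not $-xy$ (otherwise $\varphi_j\psi_j\neq fI_2$; the paper's proof silently uses $-x$). With the correct matrix, your claim that modulo $I=(x^2,xy,y^{j+1})$ only one row of $\varphi_j$ survives is false: the entry $-x$ in position $(2,2)$ also survives, so the $(2,2)$ slot of $\varphi_jp+t\psi_j$ gives $r\equiv -xp_{22}$ rather than $r\equiv 0$. The reduction can still be pushed through — the $(1,2)$ equations force the constant terms of $p_{12}$ and $p_{22}$ to vanish, whence $xp_{22}\in(x^2,xy)\subseteq I$ and $r\in I$ — but this extra step is needed and is not in your sketch. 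For $\xi_j$ the reduction is as clean as you say, since every entry of both $\xi_j$ and $\eta_j$ lies in $(x,y^j)$ exactly when $j\le\frac{n-1}{2}$.
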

\begin{proof}
We start with $\varphi_j$. In order to show that $x^2,xy$ and $y^{j+1}$ are inside $\sann\varphi_j$, it is enough to observe that multiplication by these elements are nullhomotopic. We do so with the following equalities.
\begin{align*}
    x^2 I &=
\begin{bmatrix}
     x & y^j \\
     y^{n-j-2} & -x
 \end{bmatrix}
 \begin{bmatrix}
     x & y^j \\
     0 & -x
 \end{bmatrix}
 + \begin{bmatrix}
     x & y^j \\ 
     -y^{n-j-3} & 0 
 \end{bmatrix}
 \begin{bmatrix}
     xy & y^{j+1} \\
     y^{n-j-1} & -xy
 \end{bmatrix}
 \\
 xy I &=
\begin{bmatrix}
     x & y^j \\
     y^{n-j-2} & -x
 \end{bmatrix}
 \begin{bmatrix}
     -y^{j+1} & 0 \\
     xy & -y
 \end{bmatrix}
 + \begin{bmatrix}
     1 & 0 \\ 
     x & y^j 
 \end{bmatrix}
 \begin{bmatrix}
     xy & y^{j+1} \\
     y^{n-j-1} & -xy
 \end{bmatrix}
 \\
 y^{j+1} I &=
 \begin{bmatrix}
     x & y^j \\
     y^{n-j-2} & -x
 \end{bmatrix}
 \begin{bmatrix}
     -y & 0 \\
     y & -y
 \end{bmatrix}
 + \begin{bmatrix}
     1 & 0 \\
     1 & 1
 \end{bmatrix}
 \begin{bmatrix}
     xy & y^{j+1} \\
     y^{n-j-1} & -xy
 \end{bmatrix}
\end{align*}
So, we have $(x^2, xy, y^{j+1}) \subseteq \sann(\varphi_j)$. We will now show that there is equality. We claim that $x$ is not contained in $\varphi_j$. To see this, suppose to the contrary that it is the case. Thus, there are matrices 
\begin{align*}
    \begin{bmatrix}
        a & b \\ c & d 
    \end{bmatrix}
    \text{ and }
    \begin{bmatrix}
        e &  f\\ g & h 
    \end{bmatrix}
\end{align*}
such that
\begin{align*}
    xI = \begin{bmatrix}
     x & y^j \\
     y^{n-j-2} & -x
 \end{bmatrix}
 \begin{bmatrix}
     a & b \\ c & d
 \end{bmatrix}
 +
 \begin{bmatrix}
      e& f \\ 
      g & h
 \end{bmatrix}
 \begin{bmatrix}
     xy & y^{j+1} \\
     y^{n-j-1} & -xy
 \end{bmatrix}.
\end{align*}
This matrix equality gives us four equations.
\begin{align*}
    x &= ax + cy^j + exy + fy^{n-j-1}\\
    0 &= bx + dy^j + ey^{j+1} - fxy\\
    0 &= ay^{n-j-2}-cx+gxy+hy^{n-j-1}\\
    x &= by^{n-j-2} - dx + gy^{j+1} - hxy.
\end{align*}
The first equation says that $a=1$. Using it in the third one we get that $y^{n-j-2}-cx+gxy+hy^{n-j-1} = 0$. The only candidate for cancellation is the case when $h=1/y$ which can not happen. Hence we get that $x$ is not in any of the ideals of $\varphi_j$.

For $j=0$ and for all $n$, we have shown that $x^2,y\in \sann(\varphi_0)$ but $x\notin \sann(\varphi_0)$, hence we have $\sann(\varphi_0)=(x^2,y)$

Now we will show that $y^j\notin \sann(\varphi_j)$ for $0<j<\frac{n-1}{2}$. Similar to the above case, the following system of four equations does not have a solution.
\begin{align*}
    y^j &= ax + cy^j + exy + fy^{n-j-1}\\
    0 &= bx + dy^j + ey^{j+1} - fxy\\
    0 &= ay^{n-j-2}-cx+gxy+hy^{n-j-1}\\
    y^j &= by^{n-j-2} - dx + gy^{j+1} - hxy.
\end{align*}
Indeed, the first equation says $c=1$ but the third one can not hold due to the $-x$ term.

Hence we get that $\sann(\varphi_j)=(x^2,xy,y^{j+1})$ for $0<j<\frac{n-1}{2}$. Using the fact that $\coker(\varphi_j,\psi_j)\cong \coker(\varphi_{n-j-2},\psi_{n-j-2})$ the proof for the first set of ideals is complete.

We are now going to show computations for $\sann(\xi_j)$. We again start with three matrix equalities the first of which works for the case for $j = 0 $ (which is the case of a free module).

\begin{align*}
    I &=
\begin{bmatrix}
     x & 1 \\
     y^{n-1} & -xy
 \end{bmatrix}
 \begin{bmatrix}
     0 & 1 \\
     1 & -x
 \end{bmatrix}
 + \begin{bmatrix}
     x & 1 \\ 
     1 & 0 
 \end{bmatrix}
 \begin{bmatrix}
     xy & 1 \\
     y^{n-1} & -x
 \end{bmatrix} \\
 x I &=
\begin{bmatrix}
     x & y^j \\
     y^{n-j-1} & -xy
 \end{bmatrix}
 \begin{bmatrix}
     1 & y^j \\
     0 & -x
 \end{bmatrix}
 + \begin{bmatrix}
     x & y^j \\ 
     0 & -1 
 \end{bmatrix}
 \begin{bmatrix}
     xy & y^j \\
     y^{n-j-1} & -x
 \end{bmatrix}
 \\
 y^j I &=
\begin{bmatrix}
     x & y^j \\
     y^{n-j-1} & -xy
 \end{bmatrix}
 \begin{bmatrix}
     0 & y^j \\
     1 & -x
 \end{bmatrix}
 + \begin{bmatrix}
     x & y^j \\ 
     1 & 0 
 \end{bmatrix}
 \begin{bmatrix}
     xy & y^j \\
     y^{n-j-1} & -x
 \end{bmatrix}
\end{align*}

We will show that $y^{j-1}\notin \sann(\xi_j)$. Assuming to the contrary we get the following equation for the first entry
\begin{equation*}
    ax+cy^j +exy+fy^{n-j-1}=y^{j-1},
\end{equation*}
which implies that $f=y^{2j-n}$ but if $j\leq \frac{n-1}{2}$ this is not possible. Hence for $j\leq \frac{n-1}{2}$ we have
$\sann(\xi_j)=(x,y^j)$.

On the other hand, by \cite{Y}, $X_j \cong Y_{n - j - 1} = \coker(\psi_{n - j - 1}, \phi_{n - j - 1})$. Since $Y_{n - j - 1}$ is the first syzygy of $X_{n - j - 1}$, we have $\sann \eta_{n - j - 1} \cong \sann \xi_{n - j - 1}$. Thus, $\sann \xi_j = \sann \xi_{n - j - 1}$.
\end{proof}

\begin{example}
In case of $n=5$ we have
\begin{align*}
\begin{array}{lll}
\sann_R(A)=(x^2,y), & \sann_R(M_0)  =(x^2,y), & \sann_R(X_0)=R,\\
 & \sann_R(M_1)=(x^2,xy,y^2), & \sann_R(X_1)=(x,y),\\
 & \sann_R(M_2)=(x^2,xy,y^2), & \sann_R(X_2)=(x,y^2).\\
\end{array}
\end{align*}

Hence we get 
\begin{align*}
\cl(A)=\cl(M_0)=\{A, M_0,M_1,M_2\},\\
\cl(M_1)=\cl(M_2)=\{M_1,M_2\},\\
\cl(X_0)=\{A,M_0,M_1,M_2,X_0,X_1,X_2\},\\
\cl(X_1)= \{X_1,A,M_0,M_1,M_2\},\\
\cl(X_2)=\{X_2, M_1,M_2\}.
\end{align*}

Thus the closed sets of the Alexandrov topology is
\begin{align*}
    \{\emptyset, &\{M_1,M_2\},\{X_2,M_1,M_2\},\{A,M_0,M_1,M_2\},\{A,X_2,M_0,M_1,M_2\},\\
    &\{A,X_1,X_2,M_0,M_1,M_2\},
\{A,M_0,M_1,M_2,X_0,X_1,X_2\}\}.
\end{align*}

Enumerating the vertices as in the above order we get the following Hasse diagram for partial order.

\begin{figure}[h!]
    \centering

\tikzset{every picture/.style={line width=0.75pt}} 

\begin{tikzpicture}[x=0.75pt,y=0.75pt,yscale=-1,xscale=1]

\draw    (200,261.33) -- (200.33,310.5) ;
\draw   (192.42,317.25) .. controls (192.42,312.88) and (195.96,309.33) .. (200.33,309.33) .. controls (204.71,309.33) and (208.25,312.88) .. (208.25,317.25) .. controls (208.25,321.62) and (204.71,325.17) .. (200.33,325.17) .. controls (195.96,325.17) and (192.42,321.62) .. (192.42,317.25) -- cycle ;
\draw   (192.08,253.42) .. controls (192.08,249.04) and (195.63,245.5) .. (200,245.5) .. controls (204.37,245.5) and (207.92,249.04) .. (207.92,253.42) .. controls (207.92,257.79) and (204.37,261.33) .. (200,261.33) .. controls (195.63,261.33) and (192.08,257.79) .. (192.08,253.42) -- cycle ;
\draw    (154.5,208.75) -- (194.83,247.5) ;
\draw   (141.42,203.25) .. controls (141.42,198.88) and (144.96,195.33) .. (149.33,195.33) .. controls (153.71,195.33) and (157.25,198.88) .. (157.25,203.25) .. controls (157.25,207.62) and (153.71,211.17) .. (149.33,211.17) .. controls (144.96,211.17) and (141.42,207.62) .. (141.42,203.25) -- cycle ;
\draw    (204.83,247.25) -- (245,208.25) ;
\draw   (242.92,203.75) .. controls (242.92,199.38) and (246.46,195.83) .. (250.83,195.83) .. controls (255.21,195.83) and (258.75,199.38) .. (258.75,203.75) .. controls (258.75,208.12) and (255.21,211.67) .. (250.83,211.67) .. controls (246.46,211.67) and (242.92,208.12) .. (242.92,203.75) -- cycle ;
\draw    (153.33,196.75) -- (193.5,157.75) ;
\draw   (191.42,153.25) .. controls (191.42,148.88) and (194.96,145.33) .. (199.33,145.33) .. controls (203.71,145.33) and (207.25,148.88) .. (207.25,153.25) .. controls (207.25,157.62) and (203.71,161.17) .. (199.33,161.17) .. controls (194.96,161.17) and (191.42,157.62) .. (191.42,153.25) -- cycle ;
\draw    (206,158.25) -- (246.33,197) ;
\draw    (200,96.68) -- (200,145.08) ;
\draw   (192.08,89.52) .. controls (192.08,85.28) and (195.63,81.85) .. (200,81.85) .. controls (204.37,81.85) and (207.92,85.28) .. (207.92,89.52) .. controls (207.92,93.75) and (204.37,97.18) .. (200,97.18) .. controls (195.63,97.18) and (192.08,93.75) .. (192.08,89.52) -- cycle ;
\draw    (200.33,38) -- (200.33,82.58) ;
\draw   (192.42,30.08) .. controls (192.42,25.71) and (195.96,22.17) .. (200.33,22.17) .. controls (204.71,22.17) and (208.25,25.71) .. (208.25,30.08) .. controls (208.25,34.46) and (204.71,38) .. (200.33,38) .. controls (195.96,38) and (192.42,34.46) .. (192.42,30.08) -- cycle ;

\draw (195.33,309.5) node [anchor=north west][inner sep=0.75pt]   [align=left] {1};
\draw (194.33,245.5) node [anchor=north west][inner sep=0.75pt]   [align=left] {2};
\draw (143.5,196) node [anchor=north west][inner sep=0.75pt]   [align=left] {3};
\draw (245,196.5) node [anchor=north west][inner sep=0.75pt]   [align=left] {4};
\draw (194.33,145.67) node [anchor=north west][inner sep=0.75pt]   [align=left] {5};
\draw (194,82.5) node [anchor=north west][inner sep=0.75pt]   [align=left] {6};
\draw (194.33,23.5) node [anchor=north west][inner sep=0.75pt]   [align=left] {7};

\end{tikzpicture}

    \caption{Hasse diagram of $D_5$}
    \label{D5}
\end{figure}
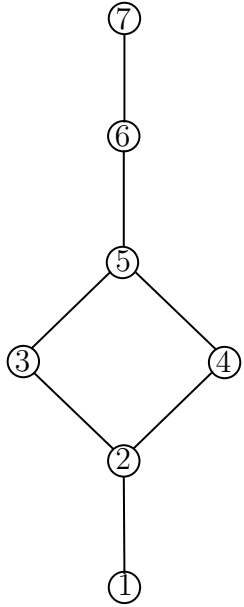
\end{example}

\begin{example}
In case of $n=7$ we have
\begin{align*}
\begin{array}{lll}
\sann_R(A)= (x^2, y), & \sann_R(M_0) = (x^2, y), & \sann_R(X_0)= R,\\
 & \sann_R(M_1) = (x^2, xy, y^2), & \sann_R(X_1)= (x, y),\\
 & \sann_R(M_2) = (x^2, xy, y^3), & \sann_R(X_2)= (x, y^2),\\
 & \sann_R(M_3) = (x^2, xy, y^3), & \sann_R(X_3)= (x, y^3),\\
 & \sann_R(M_4) = (x^2, xy, y^2), & \sann_R(X_4)= (x, y^2).\\
\end{array}
\end{align*}

Using the above calculations we find the closures as

\begin{gather*}
   \cl(M_2)=\cl(M_3)=\{M_2,M_3\},\\
   \cl(M_1)=\cl(M_4)= \{M_1,M_2,M_3,M_4\},\\
   \cl(M_0)=\cl(A)= \{A,M_0,M_1,M_2,M_3,M_4\},\\
    \cl(X_2)=\cl(X_4)=\{X_2,X_3,X_4,M_1,M_2,M_3,M_4\},\\  
    \cl(X_1) = \{A,M_0,M_1,M_2,M_3,M_4,X_1,X_2,X_3,X_4 \} ,\\
    \cl(X_0)= \{A,M_0,M_1,M_2,M_3,M_4,X_0, X_1,X_2,X_3,X_4 \}, \\
    \cl(X_3)=\{ X_3, M_2, M_3 \}.
    \end{gather*}

Thus, the closed sets of the Alexandrov topology is
\begin{gather*}
    \{ \emptyset,\cl(M_2), \cl(X_3), \cl(M_1), \cl(M_1)\bigcup \cl(X_3) , \cl(M_0), \cl(M_0)\bigcup \cl(X_3),\\ \cl(X_2),
\cl(X_2)\bigcup \cl(M_0), \cl(X_1), 
\cl(X_0) \}.
\end{gather*}

Enumerating the vertices as in the above order for closed sets, we get the following diagram:
\begin{figure}[h!]
    \centering

\tikzset{every picture/.style={line width=0.75pt}} 

\begin{tikzpicture}[x=0.75pt,y=0.75pt,yscale=-1,xscale=1]

\draw    (291.53,384.75) -- (291.92,437.17) ;
\draw   (282.58,444.36) .. controls (282.58,439.7) and (286.76,435.92) .. (291.92,435.92) .. controls (297.08,435.92) and (301.26,439.7) .. (301.26,444.36) .. controls (301.26,449.02) and (297.08,452.8) .. (291.92,452.8) .. controls (286.76,452.8) and (282.58,449.02) .. (282.58,444.36) -- cycle ;
\draw   (282.19,376.31) .. controls (282.19,371.65) and (286.37,367.87) .. (291.53,367.87) .. controls (296.69,367.87) and (300.87,371.65) .. (300.87,376.31) .. controls (300.87,380.97) and (296.69,384.75) .. (291.53,384.75) .. controls (286.37,384.75) and (282.19,380.97) .. (282.19,376.31) -- cycle ;
\draw    (237.85,328.7) -- (285.43,370) ;
\draw   (222.42,322.83) .. controls (222.42,318.17) and (226.6,314.39) .. (231.76,314.39) .. controls (236.91,314.39) and (241.1,318.17) .. (241.1,322.83) .. controls (241.1,327.49) and (236.91,331.27) .. (231.76,331.27) .. controls (226.6,331.27) and (222.42,327.49) .. (222.42,322.83) -- cycle ;
\draw    (297.23,369.74) -- (344.61,328.16) ;
\draw   (342.16,323.36) .. controls (342.16,318.7) and (346.34,314.93) .. (351.5,314.93) .. controls (356.65,314.93) and (360.84,318.7) .. (360.84,323.36) .. controls (360.84,328.03) and (356.65,331.8) .. (351.5,331.8) .. controls (346.34,331.8) and (342.16,328.03) .. (342.16,323.36) -- cycle ;
\draw    (236.47,315.9) -- (283.86,274.33) ;
\draw   (281.4,269.53) .. controls (281.4,264.87) and (285.58,261.09) .. (290.74,261.09) .. controls (295.9,261.09) and (300.08,264.87) .. (300.08,269.53) .. controls (300.08,274.19) and (295.9,277.97) .. (290.74,277.97) .. controls (285.58,277.97) and (281.4,274.19) .. (281.4,269.53) -- cycle ;
\draw    (298.61,274.86) -- (346.19,316.17) ;
\draw    (357.39,316.44) -- (404.78,274.86) ;
\draw   (402.32,270.06) .. controls (402.32,265.4) and (406.5,261.62) .. (411.66,261.62) .. controls (416.82,261.62) and (421,265.4) .. (421,270.06) .. controls (421,274.72) and (416.82,278.5) .. (411.66,278.5) .. controls (406.5,278.5) and (402.32,274.72) .. (402.32,270.06) -- cycle ;
\draw    (296.64,262.6) -- (344.02,221.02) ;
\draw   (341.57,216.23) .. controls (341.57,211.57) and (345.75,207.79) .. (350.91,207.79) .. controls (356.06,207.79) and (360.25,211.57) .. (360.25,216.23) .. controls (360.25,220.89) and (356.06,224.67) .. (350.91,224.67) .. controls (345.75,224.67) and (341.57,220.89) .. (341.57,216.23) -- cycle ;
\draw    (358.77,221.56) -- (406.35,262.87) ;
\draw    (237.85,221.02) -- (285.43,262.33) ;
\draw   (222.42,215.16) .. controls (222.42,210.5) and (226.6,206.72) .. (231.76,206.72) .. controls (236.91,206.72) and (241.1,210.5) .. (241.1,215.16) .. controls (241.1,219.82) and (236.91,223.6) .. (231.76,223.6) .. controls (226.6,223.6) and (222.42,219.82) .. (222.42,215.16) -- cycle ;
\draw    (236.47,208.23) -- (283.86,166.66) ;
\draw   (281.4,161.86) .. controls (281.4,157.2) and (285.58,153.42) .. (290.74,153.42) .. controls (295.9,153.42) and (300.08,157.2) .. (300.08,161.86) .. controls (300.08,166.52) and (295.9,170.3) .. (290.74,170.3) .. controls (285.58,170.3) and (281.4,166.52) .. (281.4,161.86) -- cycle ;
\draw    (298.61,167.19) -- (346.19,208.5) ;
\draw    (290.35,101.01) -- (290.74,153.42) ;
\draw   (281.01,92.57) .. controls (281.01,87.9) and (285.19,84.13) .. (290.35,84.13) .. controls (295.51,84.13) and (299.69,87.9) .. (299.69,92.57) .. controls (299.69,97.23) and (295.51,101.01) .. (290.35,101.01) .. controls (285.19,101.01) and (281.01,97.23) .. (281.01,92.57) -- cycle ;
\draw    (290.35,31.71) -- (290.74,84.13) ;
\draw   (281.01,23.27) .. controls (281.01,18.61) and (285.19,14.83) .. (290.35,14.83) .. controls (295.51,14.83) and (299.69,18.61) .. (299.69,23.27) .. controls (299.69,27.93) and (295.51,31.71) .. (290.35,31.71) .. controls (285.19,31.71) and (281.01,27.93) .. (281.01,23.27) -- cycle ;

\draw (287.01,436.66) node [anchor=north west][inner sep=0.75pt]   [align=left] {1};
\draw (285.83,368.43) node [anchor=north west][inner sep=0.75pt]   [align=left] {2};
\draw (225.86,315.66) node [anchor=north west][inner sep=0.75pt]   [align=left] {3};
\draw (345.6,316.2) node [anchor=north west][inner sep=0.75pt]   [align=left] {4};
\draw (285.83,262.01) node [anchor=north west][inner sep=0.75pt]   [align=left] {5};
\draw (404.98,262.36) node [anchor=north west][inner sep=0.75pt]   [align=left] {6};
\draw (345.6,209.24) node [anchor=north west][inner sep=0.75pt]   [align=left] {7};
\draw (225.27,207.46) node [anchor=north west][inner sep=0.75pt]   [align=left] {8};
\draw (284.26,154.16) node [anchor=north west][inner sep=0.75pt]   [align=left] {9};
\draw (280,85) node [anchor=north west][inner sep=0.75pt]   [align=left] {10};
\draw (281,16) node [anchor=north west][inner sep=0.75pt]   [align=left] {11};

\end{tikzpicture}

\caption{Hasse diagram of $D_7$}
\end{figure}
\end{example}
\end{chunk}

\newpage

\begin{example}
In section 4 we had claimed that $M \in \cl_n(N) \nRightarrow \cl_n(M) \subseteq \cl_n(N)$, now we give an example. Consider $D_7$ with the operator $\cl_2$ and the ideals $N=\sann(\varphi_3)=(x^2,xy,y^3)$, $M=\sann(\varphi_1)=(x^2,xy,y^2)$, and $L=\sann(\xi_1)=(x,y)$. We see that $M\in \cl_2(N)$, $L\in \cl_2(M)$ but $L\notin \cl_2(N)$ so we have $M\in \cl_2(N)$ but $\cl_2(M)\not\subseteq \cl_2(N)$.
\end{example}

\begin{chunk}{Type E.}

We first consider the case $E_6$. Let $R=k[x,y]/(f)$ with $f=x^3+y^4$. Its matrix factorizations are given as $(\varphi_1,\psi_1),(\varphi_2,\psi_2)$ and $(\alpha,\beta)$ as in \cite{Y}. Calculating the stable annihilators of the cokernels we get
\begin{align*}
    \sann_R(\varphi_1)&=\sann_R(\psi_1)=(x,y),\\
    \sann_R(\varphi_2)&=\sann_R(\psi_2)=(x,y^2),\\
    \sann_R(\alpha)&=\sann_R(\beta)=(x^2,y^2,xy).
\end{align*}
With the abuse of notation we replace $\coker(\varphi_i)$ with $\varphi_i$, same goes for all matrices, hence we have
\begin{align*}
    \cl(\varphi_1)= X , \;\; \cl(\varphi_2)=\{\alpha,\varphi_2\} ,\;\;\cl(\alpha) = \{\alpha\} .
\end{align*}

We see that closed set are again totally ordered, meaning that we have a straight Hasse diagram for $E_6$.

For $E_7$ we let $R=k[x,y]/(x^3+xy^3)$. With the same procedure as above we get
\begin{align*}
    \sann_R(\alpha)&= (x,y^3), & \sann_R(\gamma)&= (x^2,xy,y^3), & \sann_R(\varphi_1) &=\sann_R(\eta_3)=(x,y), \\ \sann_R(\varphi_2)&= (x,y^2),  & \sann_R(\eta_1)&= (x^2,xy,y^2), & \sann_R(\eta_2)&= (x^2,xy,y^3).
\end{align*}

From which we calculate the closures as
\begin{align*}
    \cl(\alpha)&= \{\alpha,\gamma,\eta_2\}, & \cl(\eta_2)&=\cl(\gamma)=\{\gamma,\eta_2\}, & \cl(\varphi_1)=\cl(\eta_3)&=X, \\ \cl(\varphi_2)&=\{\alpha,\gamma,\varphi_2,\eta_1,\eta_2\}, & \cl(\eta_1)&=\{\gamma,\eta_1,\eta_2\}.
\end{align*}

Hence, the topology given by the closed sets is $$\tau=\{\emptyset,\{\gamma,\eta_2\},\{\alpha,\gamma,\eta_2\},\{\gamma,\eta_1,\eta_2\},\{\alpha,\gamma,\eta_1,\eta_2\},\{\alpha,\gamma,\varphi_2,\eta_1,\eta_2\},X\}\;.$$
This topology has a similar Hasse diagram to the Hasse diagram of $D_5$ (\ref{D5}).

For the case of $E_8$ we consider $k[x,y]/(x^3+y^5)$. Calculating the stable annihilators of the cokernels of the indecomposable matrix factorizations we get
\begin{align*}
    \sann_R(\varphi_1)=& (x,y), \;\;\; \sann_R(\varphi_2)= (x,y^2), \;\;\; \sann_R(\gamma_1)=\sann_R(\alpha_1) =(x^2,xy,y^2), \\ \sann_R&(\alpha_2)=\sann_R(\gamma_2)=\sann_R(\xi_1)=\sann_R(\xi_2) = (x^2,xy,y^3)\;\;.
\end{align*}

As these ideals are totally ordered by inclusion we get that the topology on $E_8$ is given by the straight Hasse diagram. This concludes the examples of $E_6,E_7$ and $E_8$ type singularities.

\end{chunk}

\bibliographystyle{alpha}
\bibliography{alexandrov}
\end{document}